\newtheorem{thm}{Theorem}[section]
\newtheorem{defn}[thm]{Definition}
\newtheorem{rem}[thm]{Remark}
\newtheorem{prop}[thm]{Proposition}
\newtheorem{cor}[thm]{Corollary}
\newtheorem{exam}[thm]{Example}
\newtheorem{lemma}[thm]{Lemma}
\newcommand{\eps}{\varepsilon}
\renewcommand{\Im}{\operatorname{\rm{Im}}}
\renewcommand{\Re}{\operatorname{\rm{Re}}}
\def\var{\varepsilon}
\def\C{\mathbb C}
\def\D{\mathbb D}
\def\R{\mathbb R}
\def\ds{\displaystyle}
\def\k{\kappa}
\def\d{\delta}
\def\a{\alpha}
\def\b{\beta}
\def\g{\gamma}
\def\s{\sigma}
\newenvironment{proof*}{\vskip 2mm\noindent {}}{\hfill $\Box$ \vskip 2mm}
\begin{document}

\numberwithin{equation}{section}

\title[Visibility of Kobayashi geodesics in convex domains]
{Visibility of Kobayashi geodesics in convex domains and related properties}

\author[F. Bracci]{Filippo Bracci$^{\dag}$}
\author[N. Nikolov]{Nikolai Nikolov$^{\dag\dag}$}
\author[P. J. Thomas]{Pascal J. Thomas}

\address{F. Bracci: Dipartimento Di Matematica\\
Universit\`{a} di Roma \textquotedblleft Tor Vergata\textquotedblright\ \\
Via Della Ricerca Scientifica 1, 00133 \\
Roma, Italy}
\email{fbracci@mat.uniroma2.it}

\address{N. Nikolov:
Institute of Mathematics and Informatics\\
Bulgarian Academy of Sciences\\
Acad. G. Bonchev Str., Block 8\\
1113 Sofia, Bulgaria\\ \smallskip\newline
Faculty of Information Sciences\\
State University of Library Studies
and Information Technologies\\
69A, Shipchenski prohod Str.\\
1574 Sofia, Bulgaria}
\email{nik@math.bas.bg}

\address{P.J.~Thomas:
Institut de Math\'ematiques de Toulouse; UMR5219 \\
Universit\'e de Toulouse; CNRS \\
UPS, F-31062 Toulouse Cedex 9, France} \email{pascal.thomas@math.univ-toulouse.fr}

\begin{abstract}
Let $D\subset \C^n$ be a bounded  domain. A pair  of distinct boundary points $\{p,q\}$ of $D$ has the
\emph{visibility property} provided there exist a compact subset $K_{p,q}\subset D$ and open neighborhoods $U_p$ of $p$ and $U_q$ of $q$, such that the real geodesics for the Kobayashi metric of $D$ which join points in $U_p$ and $U_q$ intersect $K_{p,q}$. Every Gromov hyperbolic convex domain enjoys the visibility property for any couple of boundary points. The Goldilocks domains introduced by Bharali and Zimmer and the log-type domains of Liu and Wang also enjoy the visibility property.

In this paper we relate  the growth of the Kobayashi distance near
the boundary  with  visibility and provide new families of convex domains where  that property holds.
We  use the same methods to provide refinements of localization results for the Kobayashi distance,
and give a localized sufficient condition for visibility.
We also exploit visibility to study the boundary behavior of biholomorphic maps.
\end{abstract}

\thanks{$^\dag$ Partially supported by PRIN 2017 Real and Complex Manifolds: Topology, Geometry and holomorphic
dynamics, Ref: 2017JZ2SW5, by GNSAGA of INdAM and by the MIUR Excellence Department Project awarded
to the Department of Mathematics, University of Rome Tor Vergata, CUP E83C18000100006.}
\thanks{$^{\dag\dag}$ Partially supported by the National Science Fund,
Bulgaria under contract DN 12/2.}

\subjclass[2020]{32F45}

\keywords{convex domain, Kobayashi distance, Gromov hyperbolicity, visibility}

\maketitle
\tableofcontents

\section{Introduction}
\label{intro}

A  bounded domain in $D\subset \C^n$  has the \emph{visibility property} if
the real geodesics for the Kobayashi distance  $k_D$ ``bend inside'' when connecting
points close to the (Euclidean) boundary $\partial D$ (precise definitions are given below).

Gromov hyperbolic geodesic spaces have the visibility property when considering the Gromov boundary in the Gromov topology. Therefore, in case $(D,k_D)$ is Gromov hyperbolic and the Euclidean boundary $\partial D$ is homeomorphic to the Gromov boundary, $D$ enjoys the visibility property as defined above. In fact, we show that if $(D,k_D)$ is Gromov hyperbolic, then it has the visibility property if and only if the identity map extends as a continuous surjective map from the Gromov closure of $D$ to $\overline{D}$ (see Theorem~\ref{vis-grom}).

Therefore, in light of \cite{BB}, $\mathcal C^2$-smooth bounded strongly pseudoconvex domains enjoy the visibility property. While, by \cite{BGZ}, the Euclidean end compactification of a convex domain which is Gromov hyperbolic with respect to the Kobayashi distance is naturally homeomorphic to the Gromov compactification.
Thus, the visibility property holds for Gromov hyperbolic bounded convex domains.  It is known by A. Zimmer \cite{Z1} that bounded smooth convex domains are Gromov hyperbolic with respect to the Kobayashi distance if and only if they are of finite D'Angelo type, so that finite type smooth bounded convex domains enjoy the visibility property.

In a recent paper, M. Fiacchi \cite{Fia} showed that in $\C^2$, bounded smooth pseudoconvex  domains
of finite D'Angelo type are Gromov hyperbolic and the Gromov boundary is homeomorphic to the Euclidean boundary, hence, even for those domains the visibility property holds.

However, there exist convex domains for which the visibility property holds but they are not Gromov hyperbolic (see \cite{BZ}).
We provide a class of domains enjoying the visibility property without involving any uniform quantitative
assumption about the contact between
$\partial D$ and its complex tangent plane, in particular $D$ not being Goldilocks
(see \cite[Definition 1.1]{BZ}
or Definition~\ref{gold} below) nor satisfying the weaker hypotheses of  \cite[Theorem 1.5]{BM}.

\begin{thm}
\label{onept}
Let $D \subset \C^n$ be a
bounded convex domain  with  $\mathcal C^\infty$ boundary. If
all  but finitely many  points $p \in \partial
D$ are of finite D'Angelo type, then $D$ has the visibility property.
\end{thm}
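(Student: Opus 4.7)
My plan is to establish the visibility property pair-by-pair, using the localized sufficient condition for visibility developed earlier in the paper. Let $E \subset \partial D$ denote the (finite) set of boundary points of infinite D'Angelo type. Given two distinct boundary points $p \ne q$, I would choose disjoint open neighborhoods $U_p$ of $p$ and $U_q$ of $q$ so small that $U_p \cap E \subseteq \{p\}$ and $U_q \cap E \subseteq \{q\}$; this is possible precisely because $E$ is finite. The localized criterion then reduces the task to verifying a one-sided local Kobayashi-distance/visibility estimate at each of the points $p$ and $q$ separately.

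At any boundary point of finite D'Angelo type the required local estimate is already available: Zimmer's scaled analysis of smooth bounded convex boundary points of finite type, combined with the refined localization of $k_D$ also established in the paper, yields the datum that the localized sufficient condition requires. This deals with both members of the pair whenever they are finite-type points, and in every case it handles whichever of $p, q$ does not belong to $E$.

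The substantive case is that of a point $p \in E$, where no finite-type scaling is available at $p$ itself. The key observation is that, by the choice of $U_p$, the entire punctured boundary piece $\partial D \cap U_p \setminus \{p\}$ consists of finite-type points; in other words $p$ is an isolated infinite-type point surrounded by a ``finite-type ring''. The plan is to use convexity together with the localization of $k_D$ to transfer the finite-type control available on the ring to the local estimate required at $p$, thereby feeding the localized sufficient condition. The main obstacle is precisely this step: the growth of $k_D$ at an isolated infinite-type boundary point may be arbitrarily slow, so the argument cannot be localized to an arbitrarily small neighborhood of $p$ in $\partial D$, but must route estimates through the surrounding finite-type region. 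Convexity, the smoothness of $\partial D$, and the finiteness of $E$ all have to enter in an essential way. Once this local estimate at $p$ is obtained, the localized sufficient condition delivers visibility for the pair $\{p,q\}$.
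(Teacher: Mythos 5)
Your plan, as written, has a genuine gap at exactly the place you flag as the ``main obstacle,'' and that obstacle is not a technicality to be filled in later: it is the whole content of the theorem in the case $p\in E$. You want to show that an isolated infinite-type boundary point $p$, surrounded by a ``finite-type ring,'' satisfies the local estimate (being a $k$-point, in the terminology of Section \ref{localization}) so that Proposition \ref{prop:Dini-k-vis} or Proposition \ref{kpteq} can be applied. But you offer no argument for this, only the hope that convexity and localization will ``transfer'' control from the ring to $p$. Note that this is in fact \emph{equivalent} to the theorem's conclusion: by Proposition \ref{kpteq}(ii), in a Dini-smooth bounded convex domain, visibility holds if and only if every boundary point is a $k$-point. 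So proving ``$p$ is a $k$-point'' at the isolated infinite-type point is precisely the problem, and none of the preceding machinery in the paper (the $\a$-growth criteria, Theorem \ref{kp}, Theorem \ref{se}) gives it to you, because the Kobayashi growth at $p$ itself can be arbitrarily slow, and $p$ need not be locally $\C$-strictly convex. You have restated the difficulty, not resolved it.

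The paper's proof avoids this trap entirely by never attempting to prove a pointwise local estimate at the infinite-type points $p$ or $q$. Instead it argues by contradiction: if visibility failed for the pair $\{p,q\}$, there would be geodesics $\gamma_k$ joining $p_k\to p$ to $q_k\to q$ with $\sup_t\d_D(\gamma_k(t))\to 0$. After removing small balls around $p$, $q$, and the finitely many other infinite-type points, the remaining compact piece $S\subset\partial D$ has uniformly bounded finite type, and any such $\gamma_k$ must travel a macroscopic Euclidean distance while its normal projection stays in $S$. Lemma \ref{sameheight} (a quantitative estimate, obtained by a dyadic decomposition of the geodesic according to $\d_D$ combined with the finite-type bound on $\tau_j$ from Lemma \ref{tauest}) then says that a geodesic which stays at boundary distance $\le\d$ above $S$ and connects two points of comparable boundary distance can only move a Euclidean distance $O(\d^{1/M}\log\frac1\d)$, which tends to $0$. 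This contradicts the fact that $\gamma_k$ must cross the annular ring $S$. The mechanism is thus: the geodesic is forced to pass \emph{over} finite-type territory, and that alone traps it; nothing needs to be known about the metric behavior near $p$ or $q$. This is a genuinely different---and more elementary---route than the $k$-point localization you propose, and it is the one that actually closes the argument.
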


Visibility condition, although weaker than Gromov hyperbolicity, allows the study of the boundary behavior of (holomorphic) isometries (see, {\sl e.g.},  \cite{BG, BZ, CHL, Kar, Me}). In Section~\ref{visible-Gromov} we explain the underlying philosophy in the case of Gromov hyperbolic domains, showing that any biholomorphism from a bounded domain $D_1$ to a bounded domain $D_2$ which has the visibility property extends continuously to the boundary provided, for instance, $D_1$ is either strongly pseudoconvex with $\mathcal C^2$ boundary, or smooth finite type and convex or pseudoconvex and of finite type in $\C^2$. Next, based on such arguments, we localize the result (getting rid of Gromov's hyperbolicity condition) and prove the following:

\begin{thm}\label{Thm:boundary-intro}
Let $D$ and $D'$ be bounded, complete hyperbolic domains, and assume  that $\overline{D}$ has a Stein neighborhood basis and $D'$ has the visibility property. Suppose there exists $p\in\partial D$ such that $\partial D$ is $\mathcal C^3$-smooth and strongly pseudoconvex at $p$. If $F$  is a biholomorphism from $D$ to $D'$ then  $F$ admits a non-tangential limit at $p$.
\end{thm}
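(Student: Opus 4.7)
The plan is to argue by contradiction, pulling back long real Kobayashi geodesics from $D'$ into $D$ via the visibility of $D'$, and then using the local strongly pseudoconvex geometry at $p$ to rule them out.

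Suppose $F$ admits no non-tangential limit at $p$. Extracting subsequences, one obtains two sequences $\{z_n\},\{w_n\}\subset D$ lying in a non-tangential cone at $p$, with $z_n,w_n\to p$, such that $F(z_n)\to q_1$ and $F(w_n)\to q_2$ for distinct $q_1,q_2\in\partial D'$ (the limits lie in $\partial D'$ because $F^{-1}$ is continuous on $D'$ and $z_n,w_n\to p\in\partial D$). The visibility property of $D'$ applied to $\{q_1,q_2\}$ provides a compact set $K'\subset D'$ and neighborhoods $U_i$ of $q_i$ such that every real Kobayashi geodesic from $U_1\cap D'$ to $U_2\cap D'$ intersects $K'$. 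For $n$ large, such a geodesic $\sigma'_n$ from $F(z_n)$ to $F(w_n)$ exists (or, if needed, a sufficiently accurate almost-geodesic, using completeness of $D'$) and meets $K'$. Since $F$ is an isometry for the Kobayashi distances, its pullback $\sigma_n:=F^{-1}\circ\sigma'_n$ is a real Kobayashi geodesic in $D$ from $z_n$ to $w_n$ passing through some $\xi_n$ in the compact set $K:=F^{-1}(K')\subset D$.

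The core step is to extract quantitative Kobayashi estimates at $p$. The hypotheses of $\mathcal C^3$ strongly pseudoconvex boundary at $p$ and Stein neighborhood basis of $\overline D$ permit the construction of a bounded $\mathcal C^2$ strongly pseudoconvex domain $\Omega$ coinciding with $D$ on some small neighborhood $V_0$ of $p$ (by gluing the $\mathcal C^3$-piece of $\partial D$ at $p$ onto a globally strongly pseudoconvex closed piece, inside a Stein neighborhood of $\overline D$). Forstneric-Rosay localization then yields $|k_D(z,w)-k_\Omega(z,w)|\le C_0$ for $z,w\in D\cap V_0$, and the classical strongly pseudoconvex boundary estimates applied to $\Omega$ transfer to $D$: after shrinking $V_0$ so that $K\cap V_0=\emptyset$, one obtains for $z,w$ in a fixed non-tangential cone at $p$ close to $p$ and for $\xi\in K$,
\[
k_D(z,\xi)\ge -\tfrac{1}{2}\log\delta_D(z)-C,\qquad k_D(z,w)\le \tfrac{1}{2}\bigl|\log\delta_D(z)-\log\delta_D(w)\bigr|+C'.
\]
The second inequality is the local manifestation of Gromov-hyperbolicity of $k_\Omega$ at the strongly pseudoconvex point $p$ for non-tangentially convergent sequences. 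Since $\sigma_n$ is a geodesic through $\xi_n\in K$, one has $k_D(z_n,w_n)=k_D(z_n,\xi_n)+k_D(\xi_n,w_n)\ge -\tfrac{1}{2}\log(\delta_D(z_n)\delta_D(w_n))-2C$, and combining with the upper bound forces $\min\bigl(-\log\delta_D(z_n),-\log\delta_D(w_n)\bigr)\le 2C+C'$, contradicting $z_n,w_n\to p$.

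The principal obstacle is the sharp upper estimate on $k_D(z_n,w_n)$, which encodes the local Gromov-hyperbolic behavior at the strongly pseudoconvex point $p$. It requires the localization $k_D\approx k_\Omega$ near $p$, and this is precisely where the Stein neighborhood basis hypothesis is used: it provides the ambient complex-analytic setting in which the comparison domain $\Omega$ is built. The remaining ingredients, namely the visibility pullback and the standard lower estimate for the Kobayashi distance from a non-tangentially converging sequence to a fixed interior compactum, are more routine.
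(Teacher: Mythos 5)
Your proposal is correct in its overall strategy and takes a genuinely different route from the paper. The paper uses the Stein neighborhood basis together with the exposing-point theorems of Diederich--Forn\ae ss--Wold and Bracci--Forn\ae ss--Wold (and Lemma 4.5 of \cite{BST}) to construct a complex geodesic, hence a geodesic ray $\gamma$ landing transversally at $p$; it then shows, via a tangent ball and \cite[Lemma 2.3]{BF}, that any non-tangential sequence to $p$ stays within bounded Kobayashi distance of $\gamma$, so that the images under $F$ must converge to the landing point of $F\circ\gamma$ in $D'$. You instead argue by contradiction, pulling back geodesics of $D'$ through a fixed compactum via visibility and playing off an upper bound $k_D(z,w)\le\tfrac12|\log\d_D(z)-\log\d_D(w)|+C'$ for cone-bound $z,w$ against the universal lower bound \eqref{dumbest}; this is a cleaner, more self-contained argument, and it sidesteps the geodesic-ray construction entirely.

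There is, however, a flaw in your \emph{explanation} of the key upper estimate, even though the estimate itself is correct. You attribute it to a Forstneric--Rosay-type localization against a globally strongly pseudoconvex comparison domain $\Omega$ built inside a Stein neighborhood of $\overline D$, and you identify this as "where the Stein neighborhood basis hypothesis is used." Neither the comparison domain, nor the FR localization, nor the Stein basis are actually needed for this step: since $\partial D$ is $\mathcal C^2$ at $p$, there is an interior tangent ball $B\subset D$ at $p$; any two points $z,w$ in a fixed non-tangential cone at $p$ and sufficiently close to $p$ lie in $B$ with $\d_B(z)\asymp\d_D(z)$ and $\d_B(w)\asymp\d_D(w)$, and the explicit Kobayashi distance on the ball gives $k_D(z,w)\le k_B(z,w)\le\tfrac12|\log\d_D(z)-\log\d_D(w)|+C'$. (FR localization gives a lower bound on $\k_D$ against the local metric, not the two-sided comparison $|k_D-k_\Omega|\le C_0$ you assert; and in any case $\Omega$ and $D$ are not nested, so the comparison requires more care than you indicate.) Consequently your argument, once the estimate is justified by the tangent ball, does not actually use the Stein neighborhood basis, nor even strong pseudoconvexity at $p$ beyond $\mathcal C^2$ regularity. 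That is not an error; it simply means your route proves a somewhat stronger statement than the theorem as stated, in the spirit of (but with different hypotheses from) the remark the authors make after the theorem about replacing the Stein/strong pseudoconvexity assumptions by the existence of a suitable geodesic ray landing at $p$.
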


As it follows from the proof, the same result holds provided $D$ is any complete hyperbolic domain such that it has a geodesic ray $\gamma$ landing at a point $p\in \partial D$, where $\partial D$ is $\mathcal C^1$-smooth, so that any non-tangential sequence in $D$ converging to $p$ stays at finite hyperbolic distance from $\gamma$.

To find sufficient conditions for the visibility property, it is possible to forego the global hypothesis of convexity, and find
analogues with suitably localized hypotheses. A point $p$ on the boundary of a domain $D$ is a {\sl locally $\C$-strictly convex point} if there exists  an open neighborhood $U$ of $p$ and a biholomorphism $\Psi:U\to \Psi(U)$ so that $\Psi(U\cap D)$ is convex and every complex affine line  $L$ which contains $\Psi(p)$ and verifies
$L\cap \Psi(U\cap D)=\emptyset$ has the property that $L\cap \Psi(\overline{U\cap D})=\{p\}$ (see Definition \ref{cvxble}). In Section~\ref{localization} we prove Theorem~\ref{se}, which has the following consequence:

\begin{thm}
\label{Thm:se-intro}
Let $D$ be a complete hyperbolic bounded domain with Dini-smooth boundary and assume all boundary points are locally $\C$-strictly convex. Then $D$ has the visibility property.
\end{thm}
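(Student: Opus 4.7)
The plan is to derive the statement as a corollary of the localization result Theorem~\ref{se}. That theorem reduces global visibility, under Dini-smoothness and complete hyperbolicity, to the verification of a \emph{local} visibility criterion at each boundary point. Since Dini-smoothness and complete hyperbolicity are granted, the whole task is to check the local criterion at an arbitrary $p \in \partial D$.

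Fix $p \in \partial D$. By the local $\C$-strict convexity hypothesis there is an open neighborhood $U$ of $p$ and a biholomorphism $\Psi : U \to \Psi(U)$ such that $\Omega := \Psi(U\cap D)$ is convex and every complex affine line $L \ni \Psi(p)$ disjoint from $\Omega$ satisfies $L \cap \overline{\Omega} = \{\Psi(p)\}$. Since the Kobayashi pseudodistance and its (almost) geodesics are biholomorphic invariants, local visibility at $p$ in $D$ transfers to an analogous local question at $\Psi(p)$ in the convex set $\Omega$. Dini-smoothness provides the quantitative localization needed to make this transfer rigorous: on a smaller sub-neighborhood of $p$, the Kobayashi distances $k_D$ and $k_{D\cap U}$ differ by an $o(1)$ quantity as both arguments tend to $p$, so that any subsegment of a $k_D$-(almost) geodesic that remains in this smaller neighborhood is, after transport by $\Psi$, an (almost) $k_\Omega$-geodesic with comparable constants.

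It therefore suffices to prove local visibility at $\Psi(p)$ for the convex set $\Omega$. Arguing by contradiction, suppose there exist sequences $a_n \to \Psi(p)$ and $b_n \to q' \in \overline{\Omega}\setminus\{\Psi(p)\}$, together with (almost) $k_\Omega$-geodesics $\sigma_n$ joining $a_n$ to $b_n$ that eventually leave every compact subset of $\Omega$. Applying the standard normal-family argument available in bounded convex domains (as in \cite{BGZ, BZ, Z1}), one extracts from $\{\sigma_n\}$ a subsequential limit containing a nontrivial affine segment lying in $\overline{\Omega}\setminus\Omega$ with $\Psi(p)$ as an endpoint, and such a segment must be contained in a complex affine line $L$ through $\Psi(p)$ disjoint from $\Omega$. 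The hypothesis then forces $L \cap \overline{\Omega} = \{\Psi(p)\}$, contradicting the segment being nondegenerate. The local visibility criterion of Theorem~\ref{se} is thus verified at $p$, and since $p$ is arbitrary, $D$ has the visibility property.

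The main obstacle is the quantitative localization step: one must ensure that a $k_D$-(almost) geodesic that enters a small neighborhood of $p$ is still an (almost) $k_{D\cap U}$-geodesic on its subsegment inside an even smaller neighborhood, with additive defect uniformly small as the endpoints approach $p$. This is precisely the Dini-smooth boundary estimate $k_D(z,w) - k_{D\cap U}(z,w) \to 0$ as $z,w \to p$, and is built into the hypotheses of Theorem~\ref{se}; once in hand, the convex-geometric extraction of a boundary segment is, morally, the same argument used in the Gromov-hyperbolic and Goldilocks convex settings, and our local $\C$-strict convexity hypothesis is exactly what rules out the resulting segment.
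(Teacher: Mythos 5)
Your route diverges from the paper's and has a concrete gap at its central step. You assert that if $k_\Omega$-geodesics in $\Omega:=\Psi(D\cap U)$ joining $a_n\to\Psi(p)$ to $b_n\to q'$ escape every compact set, a normal-family argument yields a nontrivial affine segment in $\overline\Omega\setminus\Omega$ \emph{with $\Psi(p)$ as an endpoint}, contained in a complex affine line disjoint from $\Omega$. Neither the endpoint claim nor the complex-line claim comes for free. Escaping geodesics starting near $\Psi(p)$ can run far from $\Psi(p)$ before wandering off (the bidisc computation in Proposition~\ref{stdinvis} shows escaping subarcs limiting onto a segment joining $(\pm1,1)$, not near the original pair $(\pm1,0)$), so a subsequential limit need not produce a flat piece of $\partial\Omega$ adjacent to $\Psi(p)$; and even granting an affine segment through $\Psi(p)$, that it sits in a \emph{complex} supporting line disjoint from $\Omega$ is a nontrivial fact of the depth of \cite[Proposition 3.5]{Z2}. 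The paper extracts nothing: it establishes the $k$-point property (Definition~\ref{kpt}) at $\Psi(p)$ for $\Omega$ by the convex-geometric separation argument of Proposition~\ref{suffstd} and Corollary~\ref{trivface} (trivial multiface $F_{\Psi(p)}=\{\Psi(p)\}$ yields separating supporting hyperplanes against any $q\neq\Psi(p)$, hence the log-estimate), transfers it to $D\cap U$ by biholomorphic invariance, and then to $D$ via the localization Theorem~\ref{kp}. That is the content of Theorem~\ref{se}.

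Separately, you mischaracterize Theorem~\ref{se}: it does not ``reduce global visibility, under Dini-smoothness and complete hyperbolicity, to a local visibility criterion''. It states that locally $\C$-strictly convex points are $k$-points for $D$, with no Dini-smoothness needed. The conversion from ``all boundary points are $k$-points'' plus Dini-smoothness to the visibility property is Proposition~\ref{prop:Dini-k-vis} (built on Propositions~\ref{kpteq} and~\ref{suffvis}). The paper's proof of Theorem~\ref{Thm:se-intro} is precisely these two facts in sequence. Also, the $o(1)$ estimate $k_D(z,w)-k_{D\cap U}(z,w)\to 0$ you invoke is Proposition~\ref{dini}, which already takes the $k$-point property as a hypothesis and, in any case, is not used in the paper's proof of this theorem.
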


Local visibility implies
localization properties of the Kobayashi distance, which substantially generalize \cite[Theorem 1.4 and Theorem 3.9]{LW}.

\begin{thm}
\label{visloc}
Let $D$ be a domain in $\C^n$, $p\in \partial D$, and $U$ a neighborhood of $p$ such that $D\cap U$
is convex and
enjoys the visibility property. Then for any neighborhood $V$ of $p$ such that
$V\subset \subset U$, there exists $C>0$ such that for any $z,w \in V$,
\[
k_{D \cap U}(z,w) \le k_D(z,w) + C.
\]
\end{thm}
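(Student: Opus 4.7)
The plan is to argue by contradiction. Suppose there exist sequences $(z_n), (w_n) \subset V$ with $\delta_n := k_{D\cap U}(z_n,w_n) - k_D(z_n,w_n) \to +\infty$; extracting subsequences, $z_n \to z^*$ and $w_n \to w^*$ in $\overline V \subset U$. Any interior limit point would keep both Kobayashi distances bounded in a neighborhood (since $\overline V$ sits at positive distance from $\partial U$), contradicting the divergence of $\delta_n$; hence $z^*, w^* \in \partial D$. The key enabling observation is that $D$ is convex in a neighborhood of each $q \in \partial D \cap \overline V$: because $q \in U$ and $U$ is open, a Euclidean ball $B_q \subset U$ centered at $q$ satisfies $D \cap B_q = (D \cap U) \cap B_q$, the intersection of two convex sets. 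Combined with the global convexity of $D \cap U$ and the identity $d(z, \partial(D \cap U)) = d(z, \partial D)$ for $z$ close to $q$, this local convex structure yields, for any fixed $y_0 \in D \cap U$, the matched boundary asymptotics
\[
k_D(z, y_0) = -\tfrac{1}{2}\log d(z, \partial D) + O(1) = k_{D \cap U}(z, y_0) + O(1) \qquad (z \to q),
\]
with upper bounds coming from local Lempert discs in $D \cap B_q$ and lower bounds from the universal density estimate $\kappa_D(x, v) \ge |v|/(2 d(x, \partial D))$.

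In the case $z^* \ne w^*$, I apply the visibility of $D \cap U$ to this distinct pair $\{z^*, w^*\} \subset \partial(D \cap U)$: there exist a compact $K \subset D \cap U$ and neighborhoods of $z^*, w^*$ such that every real $k_{D \cap U}$-geodesic between them meets $K$. By convexity and Lempert's theorem, real $k_{D \cap U}$-geodesics exist; take one $\gamma_n$ from $z_n$ to $w_n$ passing through $y_n \in K$, with $y_n \to y^*$. Additivity along $\gamma_n$ combined with the boundary asymptotics gives the upper bound
\[
k_{D \cap U}(z_n, w_n) = k_{D \cap U}(z_n, y_n) + k_{D \cap U}(y_n, w_n) = -\tfrac12 \log d(z_n, \partial D) - \tfrac12 \log d(w_n, \partial D) + O(1).
\]
For the matching lower bound on $k_D(z_n, w_n)$, I fix small balls $B'_{z^*} \subset\subset B_{z^*}$ and $B'_{w^*} \subset\subset B_{w^*}$; for large $n$ any $k_D$-almost-geodesic $\sigma_n$ must start inside $B'_{z^*}$, exit it (since $w_n \in B'_{w^*}$ and $z^* \ne w^*$), and later enter $B'_{w^*}$. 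Coupling the universal density estimate with the local convex structure of $D$ in $B_{z^*}$, the initial portion of $\sigma_n$ contributes at least $-\tfrac12 \log d(z_n, \partial D) - O(1)$ to $\ell_{k_D}(\sigma_n)$, and symmetrically near $w^*$; summing yields $\delta_n = O(1)$, a contradiction. In the case $z^* = w^* = q$, visibility is not needed: a ball $B \subset U$ around $q$ with $D \cap B$ convex gives $k_{D \cap U}(z_n, w_n) \le k_{D \cap B}(z_n, w_n)$ by monotonicity of the Kobayashi distance under inclusion, and the classical Royden-type localization at the locally convex point $q$ provides $k_{D \cap B}(z_n, w_n) \le k_D(z_n, w_n) + O(1)$ for $z_n, w_n$ in a smaller neighborhood of $q$, again contradicting the assumption.

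The hardest ingredient is the lower bound for $k_D$ in the case $z^* \ne w^*$: visibility controls $k_{D \cap U}$-geodesics rigidly through the compact $K$, whereas any $k_D$-almost-geodesic may leave $U$ and exploit the unknown global geometry of $D$. The Euclidean separation $|z^* - w^*| > 0$, together with local convexity at both endpoints, is precisely what forces every $k_D$-almost-geodesic to accumulate the correct logarithmic contribution near each of the two boundary points, so as to match the asymptotic growth of $k_{D \cap U}(z_n,w_n)$ produced by visibility.
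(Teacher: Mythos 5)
Your argument shares the paper's overall structure---argue by contradiction, extract limit points $z^*,w^*$, and split according to whether $z^*=w^*$---but the key quantitative step is not justified. The claimed matched asymptotic $k_{D\cap U}(z,y_0) = -\frac12\log\delta_D(z) + O(1)$ requires $\frac12$-log-growth of $D\cap U$ (Definition~\ref{growth}(2) with $\alpha=\frac12$), and this is simply not a consequence of convexity: a bounded convex domain is only guaranteed $\alpha$-log-growth for \emph{some} $\alpha>0$, and $\alpha>\frac12$ can occur. For a concrete one-dimensional example, take $D\cap U$ to be a planar sector of opening angle $\pi/2$ with vertex at $p$; the conformal map $z\mapsto z^2$ to the half-plane shows that $k_{D\cap U}(z,y_0) = \log(1/\delta_{D\cap U}(z)) + O(1)$ as $z\to p$ along the bisector, so the exponent is $1$, not $\frac12$. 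In that case your upper bound for $k_{D\cap U}(z_n,w_n)$ is false, while your lower bound for $k_D$---which can at best deliver the universal constant $\frac12$ via \eqref{dumbest} or the $k$-point property---cannot be improved to match; the two bounds leave an unbounded gap and the contradiction does not materialize. Moreover, the phrase ``coupling the universal density estimate with the local convex structure'' does not by itself yield your claimed lower bound: $\kappa_D(x;X)\ge\|X\|/(2\delta_D(x))$ alone does not prevent an almost-geodesic from leaving $B'_{z^*}$ through points with small $\delta_D$ at small Kobayashi cost; the correct ingredient is that $z^*$ is a $k$-point for $D$ (available via Proposition~\ref{kpteq}(i) and Corollary~\ref{ckp}), which you neither invoke nor reprove. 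Likewise, the ``classical Royden-type localization'' in the $z^*=w^*$ case is genuinely the content of Theorem~\ref{kp}, not a black box: the multiplicative Royden lemma alone does not give an additive $O(1)$, and Forstneric--Rosay \cite{FR} requires strong pseudoconvexity.

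The paper's proof deliberately avoids any estimate of the form $-\frac12\log\delta$. Its core tool is a length comparison along $\kappa_D$-near-geodesics confined to a small neighborhood of a boundary point: $L_{\kappa_{D\cap U}}(\gamma)\le L_{\kappa_D}(\gamma)+C$. This comes from the metric inequality $\kappa_D(z;X)\ge(1-c'\delta_D(z)^c)\kappa_{D\cap U}(z;X)$ of Lemma~\ref{deltapower} (built on the $k$-point property and Royden's lemma), integrated along a $\kappa_D$-length-parametrized near-geodesic using only $\alpha$-growth of $D\cap U$ for \emph{some} $\alpha<1$, which holds for every bounded convex domain. The argument then splits a near-geodesic $\gamma_k$ for $k_D$ at the first exit from a neighborhood of $p$ and the last entry into a neighborhood of $q$, applies the length comparison to the two end pieces, and uses visibility of $D\cap U$ via the Gromov product bound (Proposition~\ref{necvis}) plus the triangle inequality to reassemble $k_{D\cap U}(z_k,w_k)$. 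If you wish to repair your approach, what is missing is precisely this $O(1)$ curve-length comparison; once it is available, the exact value of the log-growth exponent becomes irrelevant.
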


The plan of the paper is the following. In Section~\ref{Sec:prel} we introduce some preliminary notations and results. In Section~\ref{visible-Gromov} we consider Gromov hyperbolic domains and visible domains and prove Theorem~\ref{Thm:boundary-intro}. In Section~\ref{visgro} we recall the connection between visibility and the Gromov product, relate it to
precise growth of the Kobayashi distance on convex domains, and extend some results of \cite{Z3} to
slightly larger classes of domains, obtaining Corollary \ref{trivface}. In Section~\ref{infinite-type}, we prove Theorem~\ref{onept} and give a family of examples of smooth bounded convex domains with (non isolated) boundary points of infinite type which do (and do not) satisfy the visibility property. Finally, in Section~\ref{localization}, we investigate how  to ``localize'' the previous results replacing the convexity assumption with local conditions near the boundary and prove Theorem~\ref{Thm:se-intro}
and Theorem~\ref{visloc}.

We would like to thank Vikramjeet Singh Chandel, Anwoy Maitra, and Amar Deep Sarkar, who pointed out a gap in
the proof of Lemma \ref{sameheight} in a previous version of the manuscript.

\section{Definitions and Preliminaries}\label{Sec:prel}
In this section, $D\subset \C^n$ is a domain (connected open set). The domain is called \emph{$\C$-convex} if any non-empty
intersection with a complex line is  simply connected. We say $D$ is of \emph{finite type} if the order of contact
of any analytic disc in $\C^n \setminus D$ with $D$ is finite.  For convex domains, it is enough to consider
the order of contact of complex lines \cite{McN}, and this holds also for $\C$-convex domains \cite[Proposition 6]{NPZ}.

\begin{defn}
\label{defkob}
Let $x, y, z\in D$ and $X\in\C^n.$ The Kobayashi-Royden (pseudo)metric $\k_D$
and the Kobayashi (pseudo)distance $k_D$ of $D$ are defined as:
\begin{equation*}
\label{defgeod}
\k_D(z;X)=\inf\{|\alpha|:\exists\varphi\in\mathcal O(\D,D):
\varphi(0)=z,\alpha\varphi'(0)=X\},
\end{equation*}
\begin{equation}
\label{defkoba}
k_D(x,y)=\inf_\gamma\int_0^1\k_D(\gamma(t);\gamma'(t))dt,
\end{equation}
where the infimum is taken over all piecewise $\mathcal C^1$ curves $\gamma:[0,1]\to D$ with $\gamma(0)=x$
and $\gamma(1)=y.$

A \emph{geodesic} for $k_D$ is a curve
$\sigma: I \longrightarrow D$, where $I$ is an interval in $\R$, such that
for any $ s,t \in I$, $k_D ( \sigma (s), \sigma (t) ) = |s-t|$. Moreover:
\begin{itemize}
\item[-] If  $x,y \in D$,  $I=[0,L]$ and  $\sigma(0)=x, \sigma(L)=y$, then $L=k_D(x,y)$ and
we say that $\s$ is a geodesic joining $x$ and $y$.
\item[-] If $I=(-\infty,+\infty)$, we say that $\s$ is a \emph{geodesic line}.
\item[-] If $I=[0,\infty)$, we say that $\s$ is a \emph{geodesic ray}. \end{itemize}
A geodesic ray $\sigma$ \emph{lands} if there exists $p\in \overline{D}$ such that $\lim_{t\to\infty}\sigma(t)=p$.
\end{defn}

The domain $D$ is {\sl complete hyperbolic} if it is Kobayashi hyperbolic and if $k_D$ is a complete distance, or equivalently, the balls for the Kobayashi distance are relatively compact. Bounded convex domains are well known to be complete hyperbolic.

If a domain $D$ is complete hyperbolic, by the Hopf-Rinow Theorem, $(D, k_D)$ is a  geodesic space,  namely,  any two points in $D$ can be joined by a geodesic.

It is also a fact that when a geodesic exists, it realizes the minimum in \eqref{defkoba}
\cite[Theorem 3.1]{Ven}.

\begin{defn}
\label{defvis}
Let $D$ be a complete hyperbolic domain in $\C^n$. Let $p,q \in \partial D$, $p\neq q$. We say that the pair $\{p,q\}$ has \emph{visible
geodesics}
if there exist neighborhoods $U, V$ of $p, q$ respectively such that $\overline U \cap \overline V = \emptyset$,
and a compact set $K \subset D$ such that for any geodesic $\gamma:[0,L]\rightarrow D$
with $\gamma(0)\in U$, $\gamma(1)\in V$, then $\gamma ([0,L]) \cap K \neq \emptyset$.

We say that $D$ has the \emph{visibility property} if any pair $\{p,q\} \subset \partial D$, $p\neq q$,
has visible  geodesics.
\end{defn}

Note that   the pair $\{p,q\}$ has visible geodesics
if and only if there exists a  compact set $K \subset D$ such that
for any sequences $(p_k)_k, (q_k)_k \subset D$, with $p_k\to p$, $q_k\to q$,
then for $k$ large enough any geodesic joining $p_k$ to $q_k$ intersects $K$.

Our definition of visibility  requires that $D$ is complete hyperbolic in order for the condition of existence of geodesics joining any two points of $D$ to be non vacuous. In  \cite{BZ, BM},  the authors  defined the notion of visibility for not necessarily complete hyperbolic domains by using ``almost-geodesics'', proving that any two points of a bounded domain can be joined by a $(1,\epsilon)$-almost-geodesic. In fact, there is another reason for considering almost-geodesics instead of just geodesics even in complete hyperbolic domains: in general, almost-geodesics are easier to find than geodesics. However, in this paper we mainly consider convex domains, which are complete hyperbolic and, for the sake of simplicity, we decided to deal only with geodesics. The interested reader can check that the following arguments work as well considering almost-geodesics instead of geodesics and the general notion of visibility as in \cite{BZ, BM}.

 We now recall a notion related to Gromov hyperbolicity.

\begin{defn}
\label{groprod}
Let $D$ be a domain. Choose a base point $o\in D$. Let $x,y\in D$. The \emph{Gromov product} of $x$ and $y$ with respect to $o$ is
\begin{equation}
(x|y)_o :=\frac{1}{2} [k_D(x,o)+k_D(o,y)-k_D(x,y)].
\end{equation}
\end{defn}

One is usually interested in understanding whether such a Gromov product converges to infinity or not when $x, y$ tend to the boundary. For this aim, the choice of the base point $o$ is completely irrelevant since  $\left|(x|y)_o-(x|y)_{o'}\right| \le k_D(o,o')$.

Visibility  implies that the Gromov product of sequences tending to different boundary points is bounded from above.

\begin{prop}
\label{necvis}
Let $D$ be a complete hyperbolic  domain in $\C^n$, and
$p,q \in \partial D$. If the pair $\{p,q\}$ has visible  geodesics then
for any base point $o \in D$,
\begin{equation}
\label{stdest}
\limsup_{(x, y)\to (p,q)} (x|y)_o < \infty.
\end{equation}
\end{prop}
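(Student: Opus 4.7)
The plan is straightforward: exploit that for any point on a geodesic segment joining $x$ to $y$, the Kobayashi distance splits additively, and combine this with visibility to locate such a point inside a fixed compact set whose Kobayashi distance from $o$ is bounded.

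First I would fix $o \in D$ and invoke visibility for $\{p,q\}$ to obtain neighborhoods $U$ of $p$, $V$ of $q$ (with disjoint closures) and a compact set $K \subset D$ such that every geodesic joining a point of $U$ to a point of $V$ meets $K$. Since $D$ is complete hyperbolic, the Hopf--Rinow theorem guarantees that for any $x \in U \cap D$ and $y \in V \cap D$ there exists a geodesic $\gamma:[0,L] \to D$ with $\gamma(0)=x$, $\gamma(L)=y$ and $L = k_D(x,y)$. By visibility, one can select $t_0 \in [0,L]$ with $z := \gamma(t_0) \in K$.

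Next I would use the defining property of a geodesic to write
\begin{equation*}
k_D(x,y) \;=\; L \;=\; t_0 + (L-t_0) \;=\; k_D(x,z) + k_D(z,y).
\end{equation*}
Combining this with the triangle inequality $k_D(x,o) \le k_D(x,z) + k_D(z,o)$ and $k_D(o,y) \le k_D(o,z) + k_D(z,y)$ gives
\begin{equation*}
(x|y)_o \;=\; \tfrac{1}{2}\bigl[k_D(x,o)+k_D(o,y)-k_D(x,y)\bigr] \;\le\; k_D(o,z).
\end{equation*}
Setting $M := \sup_{w \in K} k_D(o,w)$, which is finite because $K$ is compact and $k_D$ is continuous (Kobayashi hyperbolicity makes $k_D$ continuous, and even in general $k_D(o,\cdot)$ is upper semicontinuous and bounded on compact subsets of $D$), I obtain $(x|y)_o \le M$ for all $x \in U\cap D$ and $y \in V \cap D$, whence $\limsup_{(x,y)\to(p,q)}(x|y)_o \le M < \infty$.

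There is really no hard step: the core observation is just that a geodesic, forced to pass through a fixed compact $K$, provides a pivot point $z$ witnessing additivity of $k_D$ along it, which immediately converts to a uniform bound on the Gromov product. The only thing that requires a modicum of care is justifying the existence of the geodesic (hence the explicit use of completeness) and the boundedness of $k_D(o,\cdot)$ on $K$, both standard for complete hyperbolic domains.
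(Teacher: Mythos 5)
Your proof is correct and follows essentially the same argument as the paper: pick the pivot point $z\in K$ on the geodesic, use additivity $k_D(x,y)=k_D(x,z)+k_D(z,y)$, and apply the triangle inequality to bound $(x|y)_o$ by $\sup_{w\in K}k_D(o,w)$. The paper is merely terser in justifying the existence of geodesics and the finiteness of the supremum, but these are the same routine observations you make.
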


\begin{proof}
By hypothesis, there exists a compact set $K$ such that, whenever $x, y\in D$ are so that $x$ is sufficiently close to $p$ and $y$ to $q$, every geodesic  $\gamma$ joining $x$ and $y$ intersects $K$.

In particular, for any such $x,y$ and geodesic $\gamma$, there is a point $z\in K$ belonging to $\gamma$. Hence,
\begin{equation*}
\begin{split}
k_D(x,y)&=k_D(x,z)+k_D(z,y) \ge k_D(x,o)+k_D(o,y)-2k_D(o,z)
\\
&\ge k_D(x,o)+k_D(o,y)-2 \max_{\zeta\in K} k_D(o,\zeta),
\end{split}
\end{equation*}
and we are done.
\end{proof}

The converse of the previous result holds under global assumptions (compare with \cite[Proposition 5.1(2)]{Mai}):
\begin{prop}
\label{suffvis}
Let $D$ be a bounded complete hyperbolic domain in $\C^n$.
Then $D$ has the visibility property
if and only if $\limsup_{(x, y)\to (p,q)} (x|y)_o < \infty$ for any  $p, q  \in \partial D$, $p\neq q$.
\end{prop}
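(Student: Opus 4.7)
The plan is to prove the non-trivial direction by contradiction, since the forward direction is already Proposition~\ref{necvis}. Suppose visibility fails: then there exist distinct boundary points $p, q \in \partial D$, sequences $x_n \to p$ and $y_n \to q$ in $D$, and geodesics $\gamma_n : [0, L_n] \to D$ joining $x_n$ to $y_n$ whose images eventually avoid every compact subset of $D$. Fix a base point $o \in D$; because $D$ is complete hyperbolic and bounded, the escape property is equivalent to $\inf_t k_D(o, \gamma_n(t)) \to \infty$.

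The key step is to single out an auxiliary point $z_n \in \gamma_n([0,L_n])$ that subconverges to a third boundary point $r \in \partial D \setminus \{p, q\}$. A Kobayashi midpoint could cluster at $p$ or $q$, so instead I would use a Euclidean bisection: the continuous function $t \mapsto \|\gamma_n(t) - p\|$ takes a value tending to $0$ at $t = 0$ and a value tending to $\|q - p\| > 0$ at $t = L_n$, so the intermediate value theorem furnishes $t_n$ with $\|\gamma_n(t_n) - p\| = \|p - q\|/2$. Set $z_n := \gamma_n(t_n)$. By boundedness of $D$, after extraction $z_n \to r \in \overline D$ with $\|r - p\| = \|p - q\|/2$, so $r \neq p$ and $r \neq q$. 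Moreover $r \notin D$: otherwise a Kobayashi ball around $r$ would be relatively compact in $D$ and would contain $z_n$ for large $n$, contradicting the escape property of $\gamma_n$. Hence $r \in \partial D \setminus \{p, q\}$.

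A short algebraic manipulation then closes the argument. Since $z_n$ lies on the geodesic from $x_n$ to $y_n$, we have $k_D(x_n, y_n) = k_D(x_n, z_n) + k_D(z_n, y_n)$, and substituting into the definition of the Gromov product yields the identity
\[
(x_n | y_n)_o = (x_n | z_n)_o + (z_n | y_n)_o - k_D(o, z_n).
\]
By the standing hypothesis applied to the pairs $\{p, r\}$ and $\{q, r\}$ (both consisting of distinct boundary points), the terms $(x_n | z_n)_o$ and $(z_n | y_n)_o$ are bounded above, whereas $k_D(o, z_n) \to \infty$ because $z_n \in \gamma_n$. Thus $(x_n | y_n)_o \to -\infty$, contradicting non-negativity of the Gromov product.

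The step I expect to be the main obstacle is the choice of $z_n$: one must break the symmetry between $p$ and $q$ and force accumulation at a genuinely new boundary point. The Euclidean bisection succeeds because $\|z_n - p\|$ and $\|z_n - q\|$ stay bounded below by $\|p - q\|/2$ uniformly in $n$, whereas Kobayashi-intrinsic constructions on the geodesic do not obviously exclude accumulation at $p$ or $q$.
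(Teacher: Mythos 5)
Your proposal is correct and takes essentially the same approach as the paper's proof. Both arguments extract a point $z_n$ on the geodesic that subconverges to a third boundary point, apply the hypothesis to the two resulting pairs of distinct boundary points, and then combine the geodesic additivity $k_D(x_n,y_n)=k_D(x_n,z_n)+k_D(z_n,y_n)$ with the Gromov-product identity to force a contradiction; the paper phrases the contradiction as $k_D(o,z_k)$ remaining bounded (so $z_k$ cannot escape to the boundary), which is the same content as your observation that $(x_n\mid y_n)_o\to-\infty$ violates non-negativity. The only cosmetic difference is the bisection rule used to pick $z_n$: the paper takes $z_k$ with $\|x_k-z_k\|=\|z_k-y_k\|$, while you fix $\|\gamma_n(t_n)-p\|=\|p-q\|/2$; both choices pin the limit point away from $p$ and $q$.
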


\begin{proof}
One implication follows from Proposition \ref{necvis}.

The converse follows the lines of the proof of \cite[Theorem 6.1]{Z3}.
Suppose the pair $\{p,q\}$ does not have visible geodesics.
So there exist a sequence $\{x_k\}$ converging to $p$, a sequence $\{y_k\}$ converging to $q$ and a sequence of geodesics $\gamma_k$ joining $x_k$ to $y_k$ so that the image of $\gamma_k$ eventually avoids  every compactum in $D$.

Let $z_k$ in $\gamma_k$ be such that $\|x_k-z_k\|=\|z_k-y_k\|$. By compactness of $\overline D$, we can assume, up to  subsequences, that $\{z_k\}$ converges
to a point $p_1 \in \overline D$ such that $\|p-p_1\|=\|p_1-q\|$.  Since $\gamma_k$ eventually avoids any compactum in $D$, it follows that $p_1 \notin D$, hence $p_1 \in \partial D$. Therefore,  since by hypothesis there exists $C>0$ such that $2(x_k|z_k)_o +2(y_k|z_k)_o<C$ for all $k$, we have
\begin{equation*}
\begin{split}
k_D(x_k,o)+k_D(y_k,o)& \ge k_D(x_k,y_k) =k_D(x_k,z_k)+k_D(z_k,y_k)\\& =   - 2(x_k|z_k)_o-2(y_k|z_k)_o+k_D(x_k,o)+k_D(y_k, o)+2k_D(z_k,o)\\&\geq k_D(x_k,o) + 2 k_D(z_k,o) + k_D(y_k,o) - C.
\end{split}
\end{equation*}
Therefore, $k_D(z_k,o)\leq C$, which, by the completeness of $(D, k_D)$, implies that $p_1\in D$, a contradiction.
\end{proof}

As customary, for a point $z\in D$, we let
\[
\d_D(z):=\inf\{\|z-w\| : w\in \C^n\setminus D\},
\]
be the (Euclidean) distance of $z$ from the boundary of $D$.

Since the Kobayashi distance is always larger than the Carath\'eodory distance,
the estimate in \cite[(2)]{N2} (coming from the proof of \cite[Theorem 5.4]{Blo}) yields that for $x,y\in D$,
\begin{equation}
\label{dumbest}
k_D(x,y) \ge \frac12 \left| \log \frac{\d_D(y)}{\d_D(x)}\right|.
\end{equation}
The previous estimate is pretty good when $x$ goes to the boundary and $y$ stays compactly in $D$. However, when both $x, y$ go to the boundary, it does not give much information. In fact,
in many cases  when $x$ and $y$ converge to different boundary points, the Kobayashi distance between the two explodes  (this is the case for instance if $D$ is strongly (pseudo)convex with $\mathcal C^2$ boundary, see, {\sl e.g.}, \cite[Corollary 2.3.55]{Aba}).

In a convex domain, by \eqref{dumbest},  condition \eqref{stdest} implies the following \emph{``log-estimate''}
\begin{equation}
\label{logest}
\limsup_{(x, y)\to (p,q)} \left(\frac12 \log\frac1{\d_D(x)} + \frac12 \log\frac1{\d_D(y)} -k_D(x,y) \right)< \infty.
\end{equation}

We fix some terminology to describe upper estimates that are counterparts to \eqref{dumbest}.

\begin{defn}
\label{growth}
We say that:
\begin{enumerate}
\item
$D$ has \emph{$\a$-growth} if there exist  some $\a>0$, $\beta>0$
and $x_0\in D$ such that
$$
\sup_{z\in D} \left( k_D(x_0,z)- \frac{\beta}{ \d_D(z)^{\a}} \right) <\infty.
$$
\item
$D$ has \emph{$\a$-log-growth} if there exist  some  $\a>0$
and $x_0\in D$ such that
$$
\sup_{z\in D} \left( k_D(x_0,z)- \a\log \frac{1}{\d_D(z)} \right) <\infty.
$$
\end{enumerate}
\end{defn}

Note that in particular when $\partial D$ is
Lipschitz, $D$ has $\a$-log-growth \cite[Lemma 2.3]{BZ}. In particular convex domains  automatically satisfy
that property.

Notice that if $D$ has $\frac{1}{2}$-log-growth, then when the log estimate \eqref{logest} holds,
the Gromov product $(x|y)_o$ remains bounded, i.e. \eqref{stdest} holds.

Moreover, recall that a point $p\in \partial D$ is a {\sl Dini-smooth point} if $\partial D$ is $\mathcal C^1$-smooth at $p$ and the inner unit normal vector $\nu_q$  is a Dini-continuous function for $q$ close to $p$, namely,
there is a neighborhood $U$ of $p$ such that
\[
\int_0^1 \frac{\omega(t)}{t}dt<+\infty,
\]
where $\omega(t):=\sup\{\|\nu_x-\nu_y\|: \|x-y\|<t, x,y\in U\cap\partial D\}$. The boundary $\partial D$ is {\sl Dini-smooth} if all its  points are Dini-smooth points.

If $\partial D$ is Dini-smooth, then $D$ has $\frac{1}{2}$-log-growth
by \cite[Corollary 8]{NA},  or \cite[Proposition 1.5]{Mai} (see also \cite[Proposition 4.3]{Z3}
in the convex case). \cite[Proposition 4.6]{Mai}
gives the same conclusion with slightly relaxed hypotheses.
However, there exist $\mathcal C^1$-smooth (but not Dini-smooth) domains for which $D$ does not
have $\frac12$-log-growth \cite[Example~2]{NPT}.

In particular, in Dini-smooth bounded convex domains,
condition \eqref{stdest} is equivalent to the log-estimate \eqref{logest}.

Finally, we recall a definition from \cite{BZ}.
\begin{defn} \cite[Definition 1.1]{BZ}
\label{gold}
Let
\[
M_D(r) := \sup\left\{ \frac{1}{\k_{D}(x;X)} : \d_D(x) \leq r, \|X\|=1\right\}.
\]
A bounded domain $D \subset \C^n$ is a \emph{Goldilocks domain} if
\begin{enumerate}
\item for some $\var >0$ we have
\begin{align*}
\int_0^\var \frac{M_D\left(r\right)}{r}dr < \infty,
\end{align*}
\item $D$ has $\a$-log-growth for some $\a>0$.
\end{enumerate}
\end{defn}

The first property is satisfied, for example,
for any bounded pseudoconvex domain of finite type (in sense of D'Angelo)  \cite[Lemma 2.6]{BZ},
but the class of domains with the Goldilocks property strictly contains the class of domains of finite type.

It is proved in \cite[Theorem 1.4]{BZ} that a Goldilocks domain has an extended visibility property
which implies in particular the one in Definition \ref{defvis}  when the Kobayashi distance is geodesic.
A more general result implying visibility is given in \cite[Theorem 1.5 (General Visibility Lemma)]{BM}, which essentially reduces to the
Goldilocks case when the domain is convex.

\begin{defn}
A \emph{complex face} of a convex domain $D$ is  $L\cap \partial D$, where
$L$ is an affine complex line such that $L\cap D=\emptyset$ and $L\cap \partial D\neq \emptyset$.

Given a point $p\in \partial D$, the \emph{multiface} $F_p$ of $p$ is the union of all the
complex faces of $D$ which contain $p$.
\end{defn}

Let $L$ be an affine complex line such that $L\cap D=\emptyset$ and $p\in L\cap\partial D$. Since $L$ and $D$ are convex and disjoint and $D$ open, by Hahn-Banach's extension theorem, there exists a vector $v\in \C^n$ such that $\Re \langle z-p, v\rangle<0$ for all $z\in D$ and $\Re \langle z-p, v\rangle\geq 0$ for all $z\in L$ (here $\langle \cdot, \cdot \rangle$ denotes the usual Hermitian product in $\C^n$). Actually, since $L$ is a complex affine line it turns out that $\langle z-p, v\rangle=  0$ for all $z\in L$. This implies that $L$ is contained in the complex affine hyperplane $H:=\{z\in \C^n: \langle z-p, v\rangle=  0\}$. Such a hyperplane $H$ is called a {\sl complex supporting hyperplane} of $D$ at $p$ (see \cite[Def. 3]{AR}). The intersection of $\overline{D}$ with all complex supporting hyperplanes of $D$ at $p$ is denoted by $\hbox{Ch}(p)$. In \cite{AR}, the point $p$ is called a {\sl strictly $\C$-linearly convex point} if $\hbox{Ch}(p)=\{p\}$. By the previous remark, a point for which the multiface $F_p=\{p\}$ is a strictly $\C$-linearly convex point. The converse is not true in general: the multiface of the bidisc $\D\times \D$ at $(1,1)$ is $(\overline{\D}\times \{1\})\cup (\{1\}\times \overline{\D})$, while $\hbox{Ch}((1,1))=\{(1,1)\}$.

\section{Gromov hyperbolic domains, visibility and the proof of Theorem~\ref{Thm:boundary-intro}}
\label{visible-Gromov}

The aim of this section is to see how visibility is related to Gromov visibility in a bounded domain for which the Kobayashi distance is complete and Gromov hyperbolic.

We briefly recall the definition of Gromov compactification for a complete hyperbolic domain $D$ such that $(D, k_D)$ is Gromov hyperbolic. Let $z_0\in D$. Let $\Gamma_{z_0}$ be the set of all geodesic rays $\gamma$ such that $\gamma(0)=z_0$. Two geodesic rays $\gamma, \eta\in \Gamma_{z_0}$ are equivalent if
\[
\sup_{t\geq 0}k_D(\gamma(t), \eta(t))<+\infty.
\]
The set of equivalence classes in $\Gamma_{z_0}$ is the Gromov boundary $\partial_G D$. Let $\overline{D}^G:=D\cup \partial_G D$ . One can give $\overline{D}^G$ a topology which makes it a first countable, Hausdorff, compactification of $D$. In  this topology, a sequence $\{\sigma_k\}\subset \partial_GD$ converges to $\sigma\in \partial_GD$ if for every representative $\gamma_k\in \sigma_k$, every subsequence of $\{\gamma_k\}$ has a subsequence which converges uniformly on compacta to a geodesic ray $\gamma\in \sigma$. Also, a sequence $\{z_k\}\subset D$ converges to $\sigma\in \partial_G D$ if, for every geodesic $\gamma_k:[0,R_k]\to D$ such that $\gamma_k(0)=z_0$ and $\gamma_k(R_k)=z_k$, then every subsequence of $\{\gamma_k\}$ has a subsequence that converges uniformly on compacta to a geodesic ray $\gamma\in \sigma$.

%We start with the following simple result:

%\begin{lemma}
%\label{bddcv}
%Let $D$ be a bounded, complete hyperbolic domain with the visibility property.
%If $\{z_k\}, \{w_k\}\subset D$ converge to different points on the boundary of $D$, then $k_D(z_k,w_k)\to\infty$.
%\end{lemma}
%\begin{proof}
%Consider a geodesic $\gamma_k$ joining $z_k$ to $w_k$.  By the visibility hypothesis,
%there exist a compact set $K\subset D$ and $t_k$ such that $\gamma_k(t_k) \in K$ for all $k$.
%Since $k_D$ is complete, this implies that $k_D(z_k, \gamma_k(t_k)) $ and $k_D(w_k, \gamma_k(t_k)) \to\infty$,
%thus since $\gamma_k$ is a geodesic, $k_D(z_k, w_k)= k_D(z_k, \gamma_k(t_k) )+ k_D(w_k, \gamma_k(t_k)) \to\infty$.
%\end{proof}
First notice that as a consequence of Proposition \ref{necvis}, if $D$ is a bounded, complete hyperbolic
domain with the visibility property, and $\{z_k\}, \{w_k\}\subset D$ converge to different points on the boundary
of $D$, then $k_D(z_k,w_k)\to\infty$.

\begin{lemma}
\label{raypt}
Let $D$ be a bounded, complete hyperbolic domain with the visibility property.
Then any geodesic ray lands at a  point  on  $\partial D$.

Conversely, let $z_0\in D$. Let $\{z_k\}\subset D$ be a sequence which converges to a point $p\in \partial D$, and let  $\gamma_k$ be geodesics joining $z_0$ to $z_k$. Then, up to subsequences, $\{\gamma_k\}$ converges uniformly on compacta to a  geodesic ray landing at $p$.
\end{lemma}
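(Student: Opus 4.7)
The plan is to handle the two parts separately, using visibility in both. For part (1), consider a geodesic ray $\sigma\colon[0,\infty)\to D$. The first step is to observe that $\sigma(t)$ must eventually leave every compact subset of $D$: otherwise a cluster value $z\in D$ of some $\{\sigma(t_n)\}$ with $t_n\to\infty$ (chosen so that $|t_{n+1}-t_n|\geq 1$) would give $k_D(\sigma(t_n),\sigma(t_m))\to 0$, clashing with $|t_n-t_m|=k_D(\sigma(t_n),\sigma(t_m))$. Since $D$ is bounded, $\sigma(t)$ therefore has at least one cluster point in $\partial D$. To rule out two distinct ones $p\neq q$, I would pick $s_n<t_n$ going to infinity with $\sigma(s_n)\to p$ and $\sigma(t_n)\to q$, and apply visibility at $\{p,q\}$: the restricted geodesic $\sigma|_{[s_n,t_n]}$ must meet the visibility compact $K$ at some $\sigma(u_n)$, and then $u_n=k_D(\sigma(0),\sigma(u_n))\leq\sup_{w\in K}k_D(\sigma(0),w)$, while $u_n\geq s_n\to\infty$, a contradiction.

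For part (2), set $L_k:=k_D(z_0,z_k)$; since $D$ is complete hyperbolic and $z_k\to p\in\partial D$, one has $L_k\to\infty$. The idea is a standard Arzel\`a--Ascoli extraction. For every fixed $T>0$, each image $\gamma_k([0,T])$ lies in the relatively compact (in $D$) Kobayashi ball of radius $T$ about $z_0$; on its closure the Kobayashi and Euclidean metrics are Lipschitz equivalent, so the family $\{\gamma_k\}$ is Euclidean-equicontinuous on $[0,T]$. A diagonal argument then yields a subsequence, still denoted $\{\gamma_k\}$, that converges uniformly on compact subsets of $[0,\infty)$ to a continuous curve $\gamma\colon[0,\infty)\to D$. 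Continuity of $k_D$ on $D\times D$ forces $k_D(\gamma(s),\gamma(t))=|s-t|$, so $\gamma$ is a genuine geodesic ray, and by part (1) it lands at some $q\in\partial D$.

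The main thing left, and the step that has to be arranged carefully, is to identify $q$ with $p$. I would argue by contradiction: if $q\neq p$, apply visibility at $\{p,q\}$ to obtain disjoint neighborhoods $U\ni p$, $V\ni q$ and a compact $K\subset D$, and set $M:=\sup_{w\in K}k_D(z_0,w)<\infty$. The key choice is to take $T>M$ so large that $\gamma(T)\in V$, which is possible since $\gamma(t)\to q$. For $k$ large, $\gamma_k(T)\in V$ and $z_k\in U$, so the subgeodesic $\gamma_k|_{[T,L_k]}$ must meet $K$ at some $\gamma_k(u_k)$ with $u_k\in[T,L_k]$; but then $u_k=k_D(z_0,\gamma_k(u_k))\leq M<T\leq u_k$, the contradiction that closes the proof. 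The decisive feature, which makes this bootstrap work, is that the visibility compact $K$ depends only on the pair $\{p,q\}$, not on $T$, so $T$ may be chosen after $M$ has been fixed.
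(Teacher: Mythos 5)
Your proof is correct and follows essentially the same route as the paper: use visibility to rule out two distinct cluster points of a geodesic ray on $\partial D$, then Arzel\`a--Ascoli plus another application of visibility to show the limit ray lands at $p$. The one place you diverge is in identifying $q$ with $p$: the paper extracts diagonal sequences $s_k<t_k\to\infty$ with $\gamma_k(s_k)\to q$ and $\gamma_k(t_k)\to p$, whereas you first fix the visibility data $K$, $M$ and only then choose $T>M$ with $\gamma(T)\in V$, applying visibility directly to $\gamma_k|_{[T,L_k]}$; this is a slightly cleaner way to reach the same contradiction $u_k\le M<T\le u_k$, and it works precisely because, as you note, $K$ depends only on the pair $\{p,q\}$.
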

\begin{proof}
Let $\gamma$ be a geodesic ray in $D$. Since $D$ is bounded and complete hyperbolic, the cluster set $\Gamma$ of $\gamma$ at $+\infty$ is contained in $\partial D$.

 Suppose there are two
distinct  points $p, q\in \Gamma$, and sequences $s_k\to\infty$ and $t_k\to\infty$ such that
$\lim_{k\to\infty} \gamma(s_k)=p$, $\lim_{k\to\infty} \gamma(t_k)=q$. Passing to subsequences if necessary, we may assume $s_k < t_k < s_{k+1}$ for all $k$. Then by the visibility property
applied to the geodesics $\gamma|_{[s_k,t_k]}$, there is a
compact set $K\subset D$ such that for each $k$, there exists $s'_k\in (s_k,t_k)$ such that we have $\gamma(s'_k)\in K$.
But then  $\Gamma\cap K\neq \emptyset$, a contradiction.

Conversely, fix $z_0 \in D$ and a sequence $\{z_k\}$ converging to $p$. Let $\gamma_k$ be a geodesic joining $z_0$ to $z_k$.
By Arzel\`a-Ascoli's theorem, and a  diagonal argument, up to subsequences, we can assume that $\{\gamma_k\}$ is uniformly convergent on compacta  to a geodesic ray $\gamma$. From what we already proved $\gamma$ lands at some point  $q\in \partial D$. We have to show that $q=p$. If this is not the case, however,  we can find sequences of positive real numbers $\{s_k\}$ and $\{t_k\}$ converging to $+\infty$, and we can assume $s_k<t_k$, such that $\gamma_k(s_k)\to q$ and $\gamma_k(t_k)\to p$. But then, by the visibility property, $\gamma_k|_{[s_k, t_k]}$ has to intersect a given compact set $K\subset D$ for all $k$, say, $\gamma_k(t_k')\in K$ for some $t_k'\in [s_k, t_k]$. But,
\[
t_k'= k_D(z_0, \gamma_k(t_k'))\leq \max_{z\in K}k_D(z_0, z)<+\infty,
\]
a contradiction. Hence $p=q$ and we are done.
\end{proof}

The visibility property in Gromov hyperbolic spaces has a strong consequence. As a matter of notation, let $X$ be a compactification of a domain $D$. We say that a geodesic line $\gamma:(-\infty, +\infty)\to D$ is a {\sl geodesic loop in $X$} if $\gamma$ has the same cluster set in $X$ at $+\infty$ and $-\infty$. We say that {\sl $D$ has no geodesic loops in $X$} provided there is no geodesic line in $D$ which is a geodesic loop in $X$.

It is easy to see that, if $(D, k_D)$ is Gromov hyperbolic and $\gamma$ is a geodesic line, and if we let $\sigma^{\pm}:=\lim_{t\to \pm\infty}\gamma(t)\in \partial_G D$ (the limit understood in the Gromov topology), then $\sigma^+\neq \sigma^-$. Thus $D$   has no geodesic loops in $\overline{D}^G$.

\begin{rem}\label{Rem:refereeZ}
The notion of (not having) geodesic loops in some compactification is strictly related to the notion of {\sl good compactification} introduced in \cite[Def. 6.2]{BZ}. In fact, it is easy to see that if $D^\ast$ is a good compactification in the sense of \cite{BZ}, then $D$ has no geodesic loops in $D^\ast$. Conversely, using Lemma~\ref{raypt} it is easy to see that, if $D$ is a bounded, complete hyperbolic domain with the visibility property, then $D$ has no geodesic loops in $\overline{D}$ if and only if $\overline{D}$ is a good compactification of $D$ in the sense of \cite{BZ}.
\end{rem}

\begin{thm}\label{vis-grom}
Let $D$ be a complete hyperbolic bounded domain. Assume $(D, k_D)$ is Gromov hyperbolic. Then  $D$ has the visibility property if and only if the identity map extends as a continuous surjective map $\Phi:\overline{D}^G\to\overline{D}$. Moreover, $\Phi$ is a homeomorphism if and only if $D$ has no geodesic loops in $\overline{D}$.
\end{thm}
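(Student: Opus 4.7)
The plan is to establish ``visibility $\Leftrightarrow$ continuous extension'' by constructing $\Phi$ explicitly under visibility (with the aid of Lemma \ref{raypt}), to handle the converse through the Gromov-product characterization of Proposition \ref{suffvis} together with the standard $\delta$-hyperbolic inequality, and then to translate injectivity of $\Phi$ into the absence of geodesic loops. Assume first that $D$ has the visibility property, and set $\Phi|_D=\mathrm{id}_D$, while for $\sigma\in\partial_G D$ put
\[
\Phi(\sigma):=\lim_{t\to\infty}\gamma(t),\qquad \gamma\in\sigma,
\]
the limit existing in $\partial D$ by the first half of Lemma \ref{raypt}. To see that $\Phi$ is well-defined, suppose two representatives $\gamma_1,\gamma_2\in\sigma$ landed at distinct $p_1\ne p_2$; with $C:=\sup_t k_D(\gamma_1(t),\gamma_2(t))<\infty$, visibility applied to the pair $\{p_1,p_2\}$ along geodesic segments of length $\le C$ joining $\gamma_1(t)$ to $\gamma_2(t)$ (for $t$ large) would force $\gamma_1(t)$ to remain within Kobayashi distance $C$ of a fixed compact, hence, by completeness, in a relatively compact subset of $D$, contradicting $\gamma_1(t)\to p_1\in\partial D$. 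Surjectivity of $\Phi$ is the second half of Lemma \ref{raypt}.

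For continuity, the cases involving points of $D$ reduce, after splitting into subsequences, to Lemma \ref{raypt}. The main case is $\sigma_k\to\sigma$ in $\partial_G D$: after passing to a subsequence, pick representatives $\gamma_k\in\sigma_k$ converging uniformly on compacta to some $\gamma\in\sigma$. Assuming for contradiction that $\Phi(\sigma_k)\to p'\ne p:=\Phi(\sigma)$ along some further subsequence, fix disjoint visibility neighborhoods $U\ni p,\;V\ni p'$ with the corresponding compact $K\subset D$, and pick $S_0$ so large that $\gamma([S_0,\infty))\subset U$. For $k$ large $\gamma_k(S_0)\in U$, and since $\gamma_k$ lands in $V$ there exist $t_k>S_0$ with $\gamma_k(t_k)\in V$. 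Visibility yields $r_k\in[S_0,t_k]$ with $\gamma_k(r_k)\in K$, so
\[
S_0\le r_k = k_D(z_0,\gamma_k(r_k)) \le \max_{z\in K}k_D(z_0,z),
\]
which contradicts the prior choice of $S_0$ strictly larger than $\max_{z\in K}k_D(z_0,z)$ (possible since $\gamma(s)\to p$ in $\overline{D}$).

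Conversely, suppose the continuous surjective extension $\Phi$ exists. Take $p\ne q\in\partial D$ and any $z_k\to p,\;w_k\to q$ in $D$; by compactness and first countability of $\overline{D}^G$, up to subsequences $z_k\to\sigma,\;w_k\to\tau$ in $\overline{D}^G$, and continuity of $\Phi$ gives $\sigma\ne\tau\in\partial_G D$. In the proper Gromov hyperbolic geodesic space $(D,k_D)$ the paper's sequential definition of convergence in $\overline{D}^G$ coincides with the usual Gromov-product definition, so $(z_k|z_l)_o,\,(w_k|w_l)_o\to\infty$. If $(z_k|w_k)_o\to\infty$ held along some subsequence, the $\delta$-hyperbolic inequality
\[
(z_k|w_l)_o\ge \min\bigl\{(z_k|w_k)_o,\,(w_k|w_l)_o\bigr\}-\delta
\]
would force $(z_k|w_l)_o\to\infty$ as $k,l\to\infty$, i.e.\ $\sigma=\tau$, a contradiction. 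Hence $(z_k|w_k)_o$ stays bounded and Proposition \ref{suffvis} yields visibility.

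Finally, $\Phi$ is a continuous surjection between compact Hausdorff spaces, so it is a homeomorphism iff injective. If $\Phi(\sigma)=\Phi(\tau)=:p$ with $\sigma\ne\tau\in\partial_G D$, properness and $\delta$-hyperbolicity produce a geodesic line $\gamma^*:\R\to D$ whose Gromov limits at $\pm\infty$ are $\sigma$ and $\tau$; each half-ray of $\gamma^*$ has a landing point by Lemma \ref{raypt}, and the well-definedness argument from the construction of $\Phi$ (applied to the rays from $z_0$ asymptotic to the two halves of $\gamma^*$) shows that both landing points equal $p$, so $\gamma^*$ is a geodesic loop in $\overline{D}$. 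Conversely, any geodesic loop $\gamma^*$ has both half-rays landing at the same $p\in\partial D$; a direct triangle-inequality computation shows $(\gamma^*(t)|\gamma^*(-t))_o$ is bounded in $t>0$, so the two ends represent distinct classes $\sigma\ne\tau$ in $\partial_G D$, both mapped by $\Phi$ to $p$, breaking injectivity. The genuinely delicate step is the bounded-parameter contradiction in the continuity argument: that is the only place where visibility is used in full strength, the remainder being careful bookkeeping between the two compactifications together with standard $\delta$-hyperbolic geometry.
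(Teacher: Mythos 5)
Your proof is correct and follows the same overall plan as the paper (construct $\Phi$ explicitly under visibility, prove continuity via a Lemma~\ref{raypt}-style argument, and reduce the second statement to injectivity of a continuous surjection between compact Hausdorff spaces). The differences are in how two of the steps are discharged. For the ``continuous extension $\Rightarrow$ visibility'' direction, the paper invokes the Gromov visibility property of $\overline{D}^G$ in one line, whereas you work it out via the Gromov-product characterization of Proposition~\ref{suffvis} and the $\delta$-inequality; this is a bit longer but spells out what ``it follows at once'' is hiding. The genuine divergence is in the ``no geodesic loops $\Rightarrow$ injective'' direction: the paper constructs the connecting bi-infinite geodesic by hand, proving along the way the claim $\inf_{s\ge0}k_D(\eta(s),\gamma(t_k))\to\infty$ and then applying the thin-triangle condition to geodesics $\beta_k$ joining $\gamma(t_k)$ to $\eta(t_k)$, followed by an Arzel\`a--Ascoli extraction. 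You instead cite outright the standard fact that in a proper geodesic $\delta$-hyperbolic space any two distinct points of $\partial_G D$ are joined by a geodesic line, and likewise the standard equivalence between the uniform-on-compacta and Gromov-product notions of convergence to $\partial_G D$. Both routes are valid; the paper's is more self-contained (it effectively reproves these facts in the present setting), yours is more economical at the price of appealing to black boxes from Gromov hyperbolic geometry. All the delicate details you do handle (well-definedness of $\Phi$ via a visibility contradiction, the $S_0 > \max_K k_D(z_0,\cdot)$ trick in the continuity argument, the boundedness of $(\gamma^*(t)|\gamma^*(-t))_o$ for a geodesic line) are argued correctly.
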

\begin{proof}
If the identity map extends as a surjective continuous map from $\overline{D}^G$ to $\overline{D}$, since $(D,k_D)$ is Gromov hyperbolic and hence $\overline{D}^G$ enjoys the Gromov visibility property, it follows at once that $D$ has the visibility property. Moreover, if the extension is a homeomorphism, then $D$ has no geodesic loops because $\overline{D}^G$ has none.

Conversely, assume that $D$ has the visibility property. We first define a map $\Phi:\overline{D}^G\to \overline{D}$ as follows: if $z\in D$ then $\Phi(z)=z$. If $\sigma\in \partial_G D$, let $\gamma\in \sigma$ be such that $\gamma\in \Gamma_{z_0}$. Then by Lemma~\ref{raypt}, $\gamma$ lands at some point $p\in \partial D$. Note that by Proposition \ref{necvis} %Lemma~\ref{bddcv},
the point $p$ does not depend on the representative $\gamma$ chosen. Therefore, we can set $\Phi(\sigma):=p$.

By Lemma~\ref{raypt}, the map $\Phi$ is surjective and $\Phi(z_k)\to \Phi(\sigma)$ if $\{z_k\}\subset D$ is a sequence which converges in the Gromov topology to $\sigma\in \partial_G D$. An argument similar to the one in the proof of Lemma~\ref{raypt} shows that $\Phi(\sigma_k)\to \Phi(\sigma)$ if $\{\sigma_k\}\subset \partial_G D$ converges to $\sigma$ in the Gromov topology.

Assume now that $D$ has no geodesic loops. Since $\overline{D}^G, \overline{D}$  are Hausdorff and compact, if we show that $\Phi$ is injective, then it is also a homeomorphism. Assume by contradiction that $\Phi(\sigma)=\Phi(\theta)$ for some $\sigma,\theta\in \partial_G D$, $\sigma\neq \theta$.  Let $\gamma\in \sigma$ and $\eta\in\theta$ be two representatives. Hence, there exists a sequence $\{t_k\}$ converging to $+\infty$ such that
\[
\lim_{k\to \infty}k_D(\gamma(t_k), \eta(t_k))=+\infty.
\]
We first claim that
\[
\lim_{k\to \infty}\inf_{s\geq 0}k_D(\eta(s), \gamma(t_k))=+\infty.
\]
Indeed, assume by contradiction that this is not the case. Then there exists a sequence $\{s_k\}$ of positive numbers and $R>0$ such that for all $k$
\[
k_D(\eta(s_k), \gamma(t_k))=\min_{s\geq 0}k_D(\eta(s), \gamma(t_k))\leq R.
\]
Since
\[
|t_k-s_k|=|k_D(z_0, \gamma(t_k))-k_D(z_0, \eta (s_k))|\leq k_D(\eta(s_k), \gamma(t_k))\leq R,
\]
we have for all $k$,
\[
k_D(\eta(t_k), \gamma(t_k))\leq k_D(\eta(s_k), \gamma(t_k))+k_D(\eta(s_k), \eta(t_k))\leq 2R,
\]
a contradiction. Therefore the claim holds.

Now, let $\beta_k:[0, T_k]\to D$ be a geodesic joining $\gamma(t_k)$ and $\eta(t_k)$. Since $(D, k_D)$ is Gromov hyperbolic, it follows that there exists $M>0$ such that for all $k$ and all $m\leq k$,
\[
\min\{k_D(\gamma(t_m), \eta([0,t_k])), k_D(\gamma(t_m), \beta_k([0,T_k]))\}\leq M.
\]
By the claim, if $m$ is sufficiently large, the only possibility is that $k_D(\gamma(t_m), \beta_k([0,T_k]))\leq M$. This implies that there exists a compact set $K\subset D$ such that $\beta_k$ intersects $K$ for all $k$. We can reparametrize $\beta_k$ in such a way that $\beta_k:[-s_k, r_k]\to D$ and $\beta_k(0)\in K$. Using such a parametrization and arguing as in the proof of Lemma~\ref{raypt}, we see that by visibility, $\beta_k$ converges to a geodesic line $\beta:(-\infty, +\infty)\to D$ and that $\lim_{t\to \pm \infty}\beta(t)=p$. Namely, $\beta$ is a geodesic loop, contradicting our hypothesis.
\end{proof}

In dimension one, we have a simple characterization of visible simply connected domains:

\begin{cor}
Let $D\subset \C$ be a bounded simply connected domain. Then $D$ has the visibility property if and only if $\partial D$ is locally connected.
\end{cor}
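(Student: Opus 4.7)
The plan is to bootstrap from Theorem~\ref{vis-grom} (which characterises visibility in the Gromov hyperbolic setting) together with the classical Carath\'eodory--Torhorst theorem on the boundary behaviour of Riemann maps.

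First, I would observe that every bounded simply connected domain $D\subset\C$ is Gromov hyperbolic for its Kobayashi distance. Indeed, the Riemann mapping theorem supplies a biholomorphism $\vp:\D\to D$, and since the Kobayashi distance is a biholomorphic invariant, $\vp$ is an isometry between $(\D,k_\D)$ and $(D,k_D)$. The Poincar\'e disc is Gromov hyperbolic (it is a CAT$(-1)$-space), so $(D,k_D)$ is Gromov hyperbolic as well. Moreover, geodesic rays in $\D$ are arcs of circles orthogonal to $\partial\D$ (or radial segments under M\"obius transformations), they land at points of $\partial\D$ in the Euclidean sense, and two such rays are equivalent in the Gromov sense if and only if they land at the same point. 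Thus $\partial_G\D$ is canonically identified with $\partial\D$ and the Gromov topology on $\overline{\D}^G$ coincides with the Euclidean topology on $\overline{\D}$. Since Gromov compactification is a quasi-isometric invariant, the isometry $\vp$ induces a homeomorphism $\tilde\vp:\overline{\D}\to \overline{D}^G$ extending $\vp$.

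Next I would invoke Theorem~\ref{vis-grom}: $D$ has the visibility property if and only if the identity on $D$ extends to a continuous surjection $\Phi:\overline{D}^G\to\overline{D}$. Composing with $\tilde\vp$, this is equivalent to asking that $\vp$ itself extend to a continuous (necessarily surjective, since $\overline{\vp(\D)}=\overline{D}$) map $\overline{\D}\to\overline{D}$.

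At this point the classical Carath\'eodory--Torhorst theorem finishes the job: the Riemann map $\vp:\D\to D$ of a bounded simply connected domain admits a continuous extension to $\overline{\D}\to\overline{D}$ if and only if $\partial D$ is locally connected. Putting the two equivalences together yields the corollary.

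The only substantive point to verify carefully is the identification of $\overline{D}^G$ with $\overline{\D}$ under the Riemann map, i.e.\ that the Gromov compactification of the Poincar\'e disc is the Euclidean closed disc; this is standard but worth stating explicitly, and is the only step where the one-dimensional hypothesis (simple connectivity plus the Riemann mapping theorem) really plays a role. Everything else is a direct appeal to Theorem~\ref{vis-grom} and Carath\'eodory--Torhorst.
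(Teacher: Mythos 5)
Your argument is correct and follows essentially the same route as the paper's: pass to the Poincar\'e disc via a Riemann map $\vp$, note that $(D,k_D)$ is Gromov hyperbolic with $\overline{D}^G$ homeomorphic to $\overline{\D}$ via $\vp$, invoke Theorem~\ref{vis-grom} to reduce visibility to the existence of a continuous surjective extension of $\vp$ to $\overline{\D}$, and conclude with the Carath\'eodory continuity theorem. The paper phrases the backward implication slightly differently (it deduces local connectivity from the existence of a continuous surjection $\partial\D\to\partial D$ rather than citing the full ``iff'' version of Carath\'eodory--Torhorst), but this is only a matter of presentation.
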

\begin{proof}
Assume $\partial D$ is locally connected. Let $f:\D \to D$ be a Riemann map. Since $(\D, k_{\D})$ is Gromov hyperbolic, so is $(D, k_D)$. Since $\partial D$ is locally connected, it follows by the Carath\'eodory extension theorem (see, {\sl e.g.} \cite[Thm. 4.3.1]{BCD}) that  $f$ extends continuously to the boundary, call $\tilde f$ such an extension. Now, $f^{-1}$ extends as a homeomorphism--denote it by $\hat{f}^{-1}$---from $\overline{D}^G$ to $\overline{\D}^G$   and the identity map ${\sf id}_{\D}$ extends as a homeomorphism $\hat{\sf id}_{\D}$ from  $\overline{\D}^G$ to $\overline{\D}$. Therefore, $\hat{\sf id}_{D}:= \tilde f\circ \hat{\sf id}_{\D}\circ  \hat{f}^{-1}$ is a  continuous surjective extension of ${\sf id}_{D}$. Thus, by Theorem~\ref{vis-grom}, $D$ has the visibility property.

Conversely, if $D$ has the visibility property, then by Theorem~\ref{vis-grom}, ${\sf id}_D$ extends as a surjective continuous map from $\overline{D}^G$ to $\overline{D}$. Since $\partial_G D$ is homeomorphic to $\partial_G \D$ and the latter is homeomorphic to $\partial \D$, it follows that there exists a continuous surjective function from $\partial \D$ to $\partial D$, and hence $\partial D$ is locally connected (see, {\sl e.g.} \cite[Thm. 4.3.1]{BCD}).
\end{proof}

The following example provides a domain with the visibility property and geodesic loops:

\begin{exam}
The domain $D:=\D\setminus\{[0,1)\}$ is simply connected and $\partial D$ is locally connected, thus, $D$ has the visibility property. However, $D$ has geodesic loops: let $f:\D \to D$ be a Riemann map and take $\xi_1, \xi_2\in \partial \D$, $\xi_1\neq \xi_2$, such that $f(\xi_1)=f(\xi_2)=1/2$ (see, {\sl e.g.} \cite[Prop. 4.3.5]{BCD}), let $\gamma$ be the geodesic line in $\D$ whose closure contains $\xi_1$ and $\xi_2$. Hence, $f(\gamma)$ is a geodesic loop in $D$.
\end{exam}

Since any  biholomorphism between two Gromov hyperbolic domains extends as a homeomorphim between the Gromov closure of the domains,  we have also the following direct consequence (compare with \cite[Theorem 6.5]{BZ} in light of Remark~\ref{Rem:refereeZ}):

\begin{prop}
Let $D_1, D_2\subset \C^n$ be complete hyperbolic bounded domains. Assume $D_1, D_2$ have the visibility property. If $(D_1, k_{D_1})$ is Gromov hyperbolic and $D_1$ has no geodesic loops then every biholomorphism $F:D_1\to D_2$ extends continuously on $\partial D_1$.
\end{prop}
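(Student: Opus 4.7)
The plan is to exploit the fact that a biholomorphism $F:D_1\to D_2$ is automatically an isometry for the Kobayashi distances, $k_{D_2}(F(x),F(y))=k_{D_1}(x,y)$, and then to reduce the continuous extension to a diagram-chase through the Gromov compactifications, using Theorem~\ref{vis-grom}.

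First I would observe that since $F$ is a Kobayashi isometry and Gromov hyperbolicity is a metric invariant, $(D_2,k_{D_2})$ is also Gromov hyperbolic. Moreover, $F$ maps geodesic rays issuing from a base point $z_0\in D_1$ bijectively to geodesic rays issuing from $F(z_0)\in D_2$, preserves the equivalence relation defining the Gromov boundary, and preserves convergence in the Gromov topology. Consequently $F$ extends to a homeomorphism
\[
\hat F : \overline{D_1}^{G}\longrightarrow \overline{D_2}^{G}
\]
between the Gromov compactifications (this is the standard fact that isometries of Gromov hyperbolic spaces induce homeomorphisms on the Gromov closure).

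Next I would apply Theorem~\ref{vis-grom} to each of the two sides. Since $D_1$ has the visibility property and no geodesic loops in $\overline{D_1}$, and $(D_1,k_{D_1})$ is Gromov hyperbolic, Theorem~\ref{vis-grom} yields a homeomorphism
\[
\Phi_1:\overline{D_1}^{G}\longrightarrow \overline{D_1},
\]
extending the identity. For $D_2$, we do not know a priori that $D_2$ has no geodesic loops, but because $D_2$ is visible and Gromov hyperbolic (by the first step), the same theorem provides a continuous surjection
\[
\Phi_2:\overline{D_2}^{G}\longrightarrow \overline{D_2}
\]
extending the identity on $D_2$.

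Finally, define
\[
\tilde F := \Phi_2\circ \hat F\circ \Phi_1^{-1}:\overline{D_1}\longrightarrow \overline{D_2}.
\]
Each factor is continuous (the first and third by Theorem~\ref{vis-grom}, the middle by the previous step), so $\tilde F$ is continuous; and on $D_1$ all three maps act as the identity on the relevant interior points, so $\tilde F|_{D_1}=F$. This is the desired continuous extension to $\partial D_1$. The only place that could be delicate is checking that $\Phi_1^{-1}$ is continuous, but this is automatic: $\Phi_1$ is a continuous bijection between compact Hausdorff spaces, hence a homeomorphism. Thus the proof is essentially a bookkeeping exercise once Theorem~\ref{vis-grom} is in hand, and there is no genuine analytic obstacle beyond it.
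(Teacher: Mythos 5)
Your proof is correct and follows essentially the same route the paper takes: the paper's proof is exactly the composition $\Phi_2\circ\hat F\circ\Phi_1^{-1}$, obtained by combining the biholomorphic (hence isometric) extension to Gromov closures with the two applications of Theorem~\ref{vis-grom} that you spell out. The only wrinkle you noted -- that $\Phi_1^{-1}$ is continuous -- is actually already packaged in Theorem~\ref{vis-grom}, which asserts $\Phi_1$ is a homeomorphism under the no-geodesic-loops hypothesis.
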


In particular the previous proposition applies when $D_1$ is strongly pseudoconvex, or convex and Gromov hyperbolic (so, for instance if $\partial D$ is smooth and of finite type), or $D_1\subset \C^2$ is smooth, pseudoconvex and of finite type.

One can localize the previous argument, by getting rid of Gromov hyperbolicity condition, provided one has some knowledge on geodesic rays landing at a given boundary point. This is the content of Theorem~\ref{Thm:boundary-intro}, which we are now going to prove:

\begin{proof}[Proof of Theorem~\ref{Thm:boundary-intro}]
By \cite[Thm. 2.6]{BFW} (which is actually based on \cite[Thm 1.1]{DFW} and allows to replace the hypothesis that $D$ is strongly pseudoconvex in \cite[Thm. 2.6]{BFW} with $\overline{D}$ having a Stein neighborhood basis), there exist a $C^3$-smooth bounded strongly convex domain $W\subset \C^n$ and a univalent map $\Phi:D\to \C^n$ such that $\Phi$ extends $C^3$ up to $\overline{D}$, $\Phi(p)\in \partial W$, $\Phi(D)\subset W$ and there exists an open neighborhood $U$ of $\Phi(p)$ such that $W\cap U= \Phi(D)\cap U$.  By \cite[Lemma 4.5]{BST}, there exists a complex geodesic for $W$  ({\sl i.e.}, an isometry between $k_\D$ and $k_W$) $\varphi:\D \to W$  such that $\varphi(\D)\subset U$, $\varphi(1)=p$ and $\varphi$ is also a complex geodesic for $\Phi(D)$. Note that $[0,1)\ni r\mapsto \varphi(r)$ is (once suitably reparametrized in hyperbolic arc-length) a geodesic ray in $\Phi(D)$, which lands at $p$ and it is transverse to $\partial D$ (by Hopf's Lemma). It follows that  $D$ has a geodesic ray $\gamma$ landing at $p$ non-tangentially.

Since $F(\gamma)$ is a geodesic ray in $D'$,  by Lemma~\ref{raypt}, it lands at some point $q\in \partial D'$.

In order to prove the theorem, we need to show that if $\{z_k\}\subset D$ is a sequence converging non-tangentially to $p$ then $\{F(z_k)\}$ converges to $q$.

In order to see this, fix a sequence $\{z_k\}\subset D$ converging non-tangentially to $p$. Let $B\subset D$ be a ball tangent to $\partial D$ at $p$. Since $k_D|_B\leq k_B$, it follows from \cite[Lemma 2.3]{BF} that $\{z_k\}$  stays at finite Kobayashi distance  from $\gamma$. Hence, there exist a sequence of positive real numbers $\{t_k\}$ converging to $+\infty$ and a constant $C>0$ such that $k_D(\gamma(t_k), z_k)\leq C$ for all $k$. Thus, $k_{D'}(F(z_k), F(\gamma(t_k))\leq C$  for all $k$, and,
by Proposition \ref{necvis}
%Lemma~\ref{bddcv}
it follows that $\{F(z_k)\}$ converges to $q$.
\end{proof}

\section{Visibility and growth of the metric}
\label{visgro}

In this section, we explore the possibility of a converse to Proposition \ref{necvis}. By results of Zimmer \cite[Propositions 3.5 and 4.6]{Z2},  condition \eqref{stdest}  excludes the presence of analytic discs with nonempty interior in $\partial D$.

Our first result generalizes \cite[Lemma 4.5]{Z3} to the case of domains with irregular boundaries. In this case,  some boundary points do not have a well defined tangent hyperplane, and then the notion of multiface  is needed.

\begin{prop}
\label{suffstd}
Let $D$ be a bounded convex domain, and $p,q \in \partial D$ such that $F_p \cap F_q = \emptyset$.
Then  the log-estimate \eqref{logest} holds.

In particular,   if $D$ has $\frac12$-log-growth,
then  $\limsup_{(x, y)\to (p,q)} (x|y)_o < \infty$.
\end{prop}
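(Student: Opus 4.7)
The plan is to establish the log-estimate
\[
k_D(x,y)\ge \tfrac12\log\tfrac1{\d_D(x)}+\tfrac12\log\tfrac1{\d_D(y)}-C
\]
as $(x,y)\to(p,q)$. The ``in particular'' clause then follows immediately from the observation at the end of Section~2: for convex $D$ with $\tfrac12$-log-growth, this log-estimate is equivalent to $\limsup(x|y)_o<\infty$. The crucial geometric input of $F_p\cap F_q=\emptyset$ is that $q\notin\operatorname{Ch}(p)$ and $p\notin\operatorname{Ch}(q)$: if every complex supporting hyperplane at $p$ contained $q$, then since the bounded convex $D$ equals the intersection of its open complex supporting half-spaces, the complex affine line $\ell_{pq}$ through $p$ and $q$ would lie in each such hyperplane, so $\ell_{pq}\cap D=\emptyset$, and $\ell_{pq}\cap\partial D$ would be a common complex face through both $p$ and $q$, placing $q\in F_p\cap F_q$, contradiction. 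In particular, $\ell_{pq}$ enters $D$, giving a nontrivial planar convex slice $\ell_{pq}\cap D$ with $p,q$ on its boundary.

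For the analytic estimate, fix a complex affine functional $L_p\colon\C^n\to\C$ representing a complex supporting hyperplane at $p$, normalized so that $L_p(p)=0$, $\Re L_p\le 0$ on $\bar D$, and $L_p(q)\ne 0$ (such $L_p$ exists by the previous step). Kobayashi contractivity applied to $L_p\colon D\to\H^-:=\{\Re w<0\}$ gives $k_D(x,y)\ge k_{\H^-}(L_p(x),L_p(y))$, and the explicit half-plane Poincar\'e distance formula yields, as $L_p(x)\to 0$ while $L_p(y)\to L_p(q)\ne 0$ stays bounded,
\[
k_{\H^-}(L_p(x),L_p(y))\ge \tfrac12\log\tfrac1{|\Re L_p(x)|}-C'.
\]
To turn $|\Re L_p(x)|$ into $\d_D(x)$ --- noting $|\Re L_p(x)|\ge\d_D(x)$ alone goes the wrong way --- I would let $L_p=L_p^{(x)}$ vary with $x$, choosing $L_p^{(x)}$ to be a complex supporting functional at the Euclidean closest boundary point $\pi(x)\to p$, so that $|\Re L_p^{(x)}(x)|=\d_D(x)$. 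Since $q\notin\operatorname{Ch}(p)$, a semicontinuity/compactness argument on the family of supporting hyperplanes near $p$ keeps $|L_p^{(x)}(q)|$ uniformly bounded away from $0$ as $x\to p$. One obtains the ``$x$-half'' bound $k_D(x,y)\ge\tfrac12\log\tfrac1{\d_D(x)}-C$, and symmetrically (using $L_q^{(y)}$) the ``$y$-half'' $k_D(x,y)\ge\tfrac12\log\tfrac1{\d_D(y)}-C$.

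The main obstacle is then combining these two half-estimates into the full additive log-estimate: a naive maximum recovers only half of what is needed. I would synthesize them using a Lempert complex geodesic $\phi_{x,y}\colon\D\to D$ realizing $k_D(x,y)$ (which exists because $D$ is convex, and $k_D=c_D$ is attained by Lempert's theorem). As $(x,y)\to(p,q)$, the discs $\phi_{x,y}$ converge (along a subsequence) to a boundary disc $\phi^*\colon\D\to\bar D$ landing at $p$ and $q$ at distinct boundary points $\zeta^*_p,\zeta^*_q\in\partial\D$ --- the distinctness being forced by the geometric step, since otherwise $\phi^*$ would degenerate into a common complex face of $p$ and $q$. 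Writing $\phi_{x,y}(\zeta_x)=x$ and $\phi_{x,y}(\zeta_y)=y$, the one-dimensional estimate gives $k_D(x,y)=k_\D(\zeta_x,\zeta_y)\simeq\tfrac12\log\tfrac1{1-|\zeta_x|}+\tfrac12\log\tfrac1{1-|\zeta_y|}+O(1)$, and Koebe-type distortion along the univalent $\phi_{x,y}$, together with boundary regularity of complex geodesics in convex domains, converts this into $\tfrac12\log\tfrac1{\d_D(x)}+\tfrac12\log\tfrac1{\d_D(y)}+O(1)$. The technical heart of the proof is this limit argument for $\phi_{x,y}$ and the uniform Koebe comparison --- in particular, ensuring that $F_p\cap F_q=\emptyset$ is strong enough to propagate to uniform control on the angular derivatives of $\phi^*$ at $\zeta^*_p$ and $\zeta^*_q$.
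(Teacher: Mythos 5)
Your geometric preliminary (that $F_p\cap F_q=\emptyset$ forces $q\notin\operatorname{Ch}(p)$ and $p\notin\operatorname{Ch}(q)$, so supporting functionals near $p$ remain bounded away from $0$ at $q$) is correct and matches the way the paper exploits the hypothesis. The one-sided estimates $k_D(x,y)\ge\frac12\log\frac1{\d_D(x)}-C$ via projection onto a complex supporting functional at $\pi(x)$ are also correct, and you are right that they alone only deliver half the conclusion. The problem is the step you flag yourself as ``the technical heart'': the Lempert-geodesic synthesis has a genuine gap.

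To convert $1-|\zeta_x|$ into $\d_D(x)$ along the extremal disc $\phi_{x,y}$, the only two-sided comparison available without boundary-regularity hypotheses is the one coming from \eqref{dumbest}: normalizing $\phi_{x,y}(0)$ at the hyperbolic midpoint, one gets $1-|\zeta_x|\le \frac{2\,\d_D(x)}{\d_D(\phi_{x,y}(0))}$. The constant blows up unless $\d_D(\phi_{x,y}(0))$ stays bounded below, i.e.\ unless the geodesics from $x$ to $y$ pass through a fixed compact set --- which is essentially the visibility conclusion one is trying to establish, so the argument is circular. The same difficulty infects the claim that the limit disc $\phi^*$ is non-degenerate with $\zeta^*_p\ne\zeta^*_q$ (needed for the additive asymptotics of $k_\D(\zeta_x,\zeta_y)$): a priori $\phi_{x,y}(0)$ could escape to $\partial D$ and $\phi^*$ collapse to a boundary point, precisely the scenario of failing visibility. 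Invoking ``boundary regularity of complex geodesics'' would require smoothness of $\partial D$ that the proposition does not assume.

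The paper sidesteps all of this with an elementary barrier argument. It thickens the complex supporting hyperplanes $H_{p_k}$, $H_{q_k}$ (at the nearest boundary points to $x_k$, $y_k$) into tubes $\mathcal N_{p_k}(\eta)$, $\mathcal N_{q_k}(\eta)$ of a fixed small width $\eta$; the hypothesis $F_p\cap F_q=\emptyset$ ensures these tubes intersected with $\overline D$ are disjoint for $k$ large. The one-dimensional projection then shows $k_D\bigl(x_k,D\setminus\mathcal N_{p_k}(\eta)\bigr)\ge\frac12\log\frac1{\d_D(x_k)}-\frac12\log\frac1{\eta}$ and likewise for $y_k$. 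Since any curve from $x_k$ to $y_k$ must contain a point $z$ lying outside both tubes, the length is bounded below by $k_D(x_k,z)+k_D(z,y_k)$, and the two one-sided bounds now add. You would need to replace your Lempert/Koebe paragraph with this concatenation-through-a-common-exterior-region idea to get the full additive estimate.
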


\begin{proof}
Let $\{x_k\}\subset D$ be a sequence converging to $p$ and $\{y_k\}\subset D$  a sequence converging to $q$. For each $k$,
choose a point $p_k \in \partial D$ such that $\|x_k-p_k\|=\d_D(x_k)$. Then, because $B(x_k,\d_D(x_k))\subset D$
and $p_k\in \overline B(x_k,\d_D(x_k))$, there exists a unique (real) tangent hyperplane $T_{p_k}$, and therefore
a unique complex tangent hyperplane $H_{p_k}$, to $\partial D$
at $p_k$. Up to passing to subsequences, we can assume that $H_{p_k}$ converges (in the Hausdorff
distance when restricted to a fixed ball) to $H_p$, a complex supporting hyperplane for $D$ at $p$. Then $H_p\cap \overline D \subset F_p$.
In the same way, we get $H_q$, a complex supporting hyperplane for $D$ at $q$.

Since $F_p\cap F_q=\emptyset$, it follows that $(H_p\cap H_q)\cap \overline D =\emptyset$ and,  $D$ being bounded, the Euclidean distance of $H_p\cap \overline{D}$ from $H_q\cap \overline{D}$ is positive. We write, for $\eta>0$,
\begin{equation}
\label{tube}
\mathcal N_p (\eta) := \left\{ z\in \C^n: \mbox{dist}(z, H_p) \le \eta \right\},
\quad
\mathcal N_q (\eta) := \left\{ z\in \C^n: \mbox{dist}(z, H_q) \le \eta \right\}.
\end{equation}
Fix $\eta >0$ small enough such that $\mathcal N_p (\eta) \cap \mathcal N_q (\eta)$ stays at a positive
distance from $\overline D$.   Let
\begin{equation*}
\mathcal N_{p_k} (\eta) := \left\{ z\in \C^n: \mbox{dist}(z, H_{p_k}) \le \eta \right\}, \quad
\mathcal N_{q_k} (\eta) := \left\{ z\in \C^n: \mbox{dist}(z, H_{q_k}) \le \eta \right\}.
\end{equation*}
Then $\mathcal N_{p_k} (\eta) \cap \overline D$ converges to
$\mathcal N_p (\eta) \cap \overline D$ in the Hausdorff distance (and the same holds for $q_k$
and $q$ respectively).
So for $k$ large enough,
\[
(\mathcal N_{p_k} (\eta) \cap \overline D) \cap (\mathcal N_{q_k} (\eta) \cap \overline D) =\emptyset.
\]
Suppose now that $k$ is large enough so that $\d_D (x_k), \d_D (y_k) < \frac12 \eta$.
The affine real tangent hyperplane to $\partial D$ at $p_k$, $T_{p_k}$, is orthogonal to the real line passing through $x_k$ and $p_k$
and contains the affine complex hyperplane $H_{p_k}$.

Let $\pi'_k$ be the orthogonal projection
to the complex line through $x_k$ and $p_k$, parallel to $H_{p_k}$. Take a complex coordinate
on this line so that $p_k$ is represented by $0$ and the inward half line from $p_k$ containing $x_k$
goes to the positive imaginary half axis.
Then  $\pi'_k(\mathcal N_{p_k} (\eta))= \overline D(0,\eta)$,  $\pi'_k(x_k)=i \d_D (x_k)$,
and $\pi'_k(D) \subset \{\Im z >0\}=: \mathbb H$.
Since holomorphic maps contract the Kobayashi distance,
\begin{equation}
\label{halfhole}
k_D\left(x_k, D \setminus \mathcal N_{p_k} (\eta) \right)
\ge k_{\mathbb H}\left(i \d_D (x_k), \mathbb H \setminus \overline D(0,\eta)\right)
= \frac12 \log \frac1{\d_D (x_k)} - \frac12 \log \frac1{\eta},
\end{equation}
the last equality being obtained by an explicit computation.  An analogous inequality can be proved
for $k_D\left(y_k, D \setminus \mathcal N_{q_k} (\eta) \right)$ by using an orthogonal projection $\pi''_k$
parallel to $H_{q_k}$.

Finally, any curve from $x_k$ to $y_k$ must contain a point
$z_k \in F_k:=D \setminus \left(\mathcal N_{p_k} (\eta) \cup \mathcal N_{q_k} (\eta) \right)$. By the above estimates,
\begin{equation*}
k_D(x_k,y_k) \ge \inf_{z \in F_k} k_D(x_k,z)+\inf_{z \in F_k} k_D(z,y_k)
\ge
\frac12 \log \frac1{\d_D (x_k)} +\frac12 \log \frac1{\d_D (y_k)}-  \log \frac1{\eta},
\end{equation*}
and we are done.

The last statement follows at once from  the remark after Definition \ref{growth}.
\end{proof}

\begin{cor}
\label{trivface}
Let $D$ be a bounded convex domain, and let $p \in \partial D$ be such that
 $F_p=\{p\}$. Then for any $q \in \partial D \setminus \{p\}$,  \eqref{logest} holds.
% $\limsup_{(x, y)\to (p,q)} (x|y)_o < \infty$.

 In particular, if $F_p=\{p\}$ for any $p\in\partial D$ and $D$ has $\frac12$-log-growth,
 then $D$ has the visibility property.
\end{cor}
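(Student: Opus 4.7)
The plan is to reduce the first assertion to Proposition~\ref{suffstd} by verifying that its hypothesis $F_p \cap F_q = \emptyset$ is automatic under the assumption $F_p = \{p\}$, and then to feed the first assertion into Proposition~\ref{suffvis} to deduce the visibility property.

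\textbf{Step 1 (multiface disjointness).} Fix $q \in \partial D \setminus \{p\}$. I want to show $F_p \cap F_q = \emptyset$. Since $F_p = \{p\}$, the intersection $F_p \cap F_q$ is either empty or equal to $\{p\}$, and I only need to rule out the second case, i.e.\ that $p \in F_q$. Suppose by way of contradiction that $p \in F_q$. By the definition of the multiface, there would exist a complex affine line $L$ with $L \cap D = \emptyset$ and $q \in L \cap \partial D$, such that also $p \in L \cap \partial D$. But then $L \cap \partial D$ is, by definition, a complex face of $D$ that contains $p$, hence $L \cap \partial D \subset F_p = \{p\}$, which contradicts $q \in L \cap \partial D$ with $q \neq p$.

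\textbf{Step 2 (log-estimate at $(p,q)$).} Having shown $F_p \cap F_q = \emptyset$, I invoke Proposition~\ref{suffstd} directly; this yields the log-estimate \eqref{logest} for the pair $(p,q)$, which is the first conclusion of the corollary.

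\textbf{Step 3 (visibility under the global assumption).} Now assume $F_p = \{p\}$ for every $p \in \partial D$ and that $D$ has $\tfrac12$-log-growth. For any two distinct boundary points $p, q$, Step~1 applies to give $F_p \cap F_q = \emptyset$, so Step~2 yields the log-estimate \eqref{logest} for every such pair. The remark immediately following Definition~\ref{growth} shows that the combination of $\tfrac12$-log-growth with \eqref{logest} forces the Gromov product to stay bounded, that is, $\limsup_{(x,y)\to(p,q)} (x|y)_o < \infty$ for every pair of distinct boundary points. Proposition~\ref{suffvis} then gives the visibility property.

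I don't anticipate a real obstacle here, since all the analytic work is contained in Proposition~\ref{suffstd} and Proposition~\ref{suffvis}. The only genuinely new content is the elementary convex-geometric observation in Step~1, which is essentially a tautology once one unwinds the definition of the multiface: a complex face through $q$ that happened to pass through $p$ would itself be a nontrivial complex face through $p$, contradicting $F_p=\{p\}$.
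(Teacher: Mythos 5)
Your proposal is correct and follows exactly the route the paper intends: the paper gives no explicit proof of this corollary, treating it as an immediate consequence of Proposition~\ref{suffstd} (for the log-estimate) and Proposition~\ref{suffvis} (for visibility, via the remark after Definition~\ref{growth}), and the only thing you needed to supply --- the observation that $F_p=\{p\}$ forces $F_p\cap F_q=\emptyset$ for any $q\ne p$, since a complex face through both $p$ and $q$ would be a complex face of $D$ containing $p$ and hence contained in $F_p=\{p\}$ --- is precisely the elementary convex-geometric remark you gave in Step~1.
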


The above conclusion for  bounded convex domains with $\mathcal C^{1,\alpha}$-boundary follows as well from \cite[Theorem 4.1(2)]{Z3}.

One may ask whether a full converse of Proposition \ref{necvis} holds.  It does not, and
it seems that having complex faces on the boundary ``on the way'' from $p$ to $q$ is an obstruction.

\begin{prop}
\label{stdinvis}
Let $D:=\D^2$, $p:=(-1,0)$, $q:=(1,0)$. Then
the pair $\{p,q\}$ satisfies \eqref{stdest}, but not the visibility property.
\end{prop}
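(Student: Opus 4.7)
The plan splits into two independent parts. For the upper bound \eqref{stdest}, the idea is to exploit the product formula $k_{\D^2}((a,b),(c,d))=\max\{k_\D(a,c), k_\D(b,d)\}$, taking $o:=(0,0)$ as base point. If $x=(z_1,z_2)$ is close enough to $p$ and $y=(w_1,w_2)$ is close enough to $q$, the second coordinates $z_2, w_2$ remain in a compact subset of $\D$ while $k_\D(z_1,0)$, $k_\D(w_1,0)$ and $k_\D(z_1,w_1)$ all tend to infinity. Hence for such $(x,y)$ the maximum is realised by the first coordinate in all three Kobayashi distances appearing in the Gromov product, and $(x|y)_o$ reduces to the Gromov product $(z_1|w_1)_0$ computed in $\D$. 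Since $\D$ is Gromov hyperbolic and $z_1, w_1$ converge to the distinct boundary points $-1, 1$, this last quantity stays bounded, which gives \eqref{stdest}.

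For the failure of visibility, the plan is to construct explicit sequences and geodesics that leave every compactum. Take $x_k:=(-1+1/k, 0)$, $y_k:=(1-1/k, 0)$, and set $L_k:=k_\D(-1+1/k, 1-1/k)=k_{\D^2}(x_k, y_k)$. The key observation, again from the product formula, is that a curve $\gamma(t)=(\gamma_1(t),\gamma_2(t)):[0,L_k]\to\D^2$ is a geodesic for $k_{\D^2}$ whenever $\gamma_1$ is the unit-speed parametrisation of the real diameter in $\D$ joining $-1+1/k$ to $1-1/k$ and $\gamma_2:[0,L_k]\to\D$ is any $1$-Lipschitz curve for $k_\D$ with $\gamma_2(0)=\gamma_2(L_k)=0$; indeed one gets $k_{\D^2}(\gamma(s),\gamma(t))=\max\{|s-t|, k_\D(\gamma_2(s),\gamma_2(t))\}=|s-t|$. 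The crucial point is that $\gamma_2$ need only be $1$-Lipschitz, not a genuine geodesic, and this freedom is what allows one to make geodesics of $\D^2$ wander close to the boundary.

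Concretely, fixing $\eps>0$ and setting $T:=k_\D(0, 1-\eps/2)$, I would take $\gamma_2$ to go at unit speed from $0$ to $1-\eps/2$ along the positive real axis during $[0,T]$, stay at $1-\eps/2$ throughout $[T, L_k-T]$ (standing still is trivially $1$-Lipschitz), and return from $1-\eps/2$ to $0$ during $[L_k-T, L_k]$. This is legitimate as soon as $L_k>2T$, which holds for $k$ large. On the middle interval $|\gamma_2(t)|=1-\eps/2>1-\eps$, and on the outer intervals the parametrisation $\gamma_1(t)=\tanh(t-L_k/2)$ gives $|t-L_k/2|\geq L_k/2-T$, which for $k$ large exceeds $k_\D(0,1-\eps)$, forcing $|\gamma_1(t)|>1-\eps$ there. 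Thus the image of $\gamma_k$ avoids the compact set $K_\eps:=\{(z_1,z_2)\in\D^2: |z_1|,|z_2|\leq 1-\eps\}$ for $k$ large. Since any compact subset of $\D^2$ is contained in some $K_\eps$, the pair $\{p,q\}$ does not have visible geodesics.

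The only subtlety I expect is getting the bookkeeping right so that at every time $t\in[0,L_k]$ at least one of the coordinates lies outside the disc of radius $1-\eps$. Once one realises that the product formula lets a geodesic of $\D^2$ have a merely $1$-Lipschitz second coordinate, the construction has enough room to make the non-trivial excursion in the second slot match the "central" portion of the diameter geodesic in the first slot.
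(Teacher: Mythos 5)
Your proof is correct, and it uses the same underlying structural fact as the paper — the product formula $k_{\D^2}((a,b),(c,d)) = \max\{k_\D(a,c),k_\D(b,d)\}$ — but packages it a bit differently.

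For (\ref{stdest}), the paper projects to $\D\times\{0\}$ to get $k_D(x,y)\ge k_\D(x_1,y_1)$, rewrites this via the explicit $\log$ asymptotics in $\D$, and concludes via the equivalence between the log-estimate (\ref{logest}) and (\ref{stdest}). You instead observe that when $x\to p$ and $y\to q$, all three distances in the Gromov product are realised by the first coordinate, so $(x|y)_o$ reduces \emph{exactly} to the Gromov product $(z_1|w_1)_0$ in $\D$, which is bounded by Gromov hyperbolicity of the disc. This is a slightly more direct route: it bypasses the growth estimates and the equivalence (\ref{logest})~$\Leftrightarrow$~(\ref{stdest}) entirely.

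For the failure of visibility, both proofs construct explicit geodesics of $\D^2$ that wander toward the torus. The paper builds a three-piece broken geodesic through specific intermediate points $\tilde p_\eps$, $\tilde q_\eps$ with parameters scaled as $\eps^{1/2}$ and a constant $c>1$ tuned so the Kobayashi lengths balance. Your version isolates the key mechanism more cleanly as a lemma — any curve whose first coordinate is a unit-speed geodesic of $\D$ and whose second coordinate is merely $1$-Lipschitz for $k_\D$ is automatically a geodesic of $\D^2$ — and then chooses an obviously $1$-Lipschitz excursion (go out, sit still, come back) to push the second coordinate to modulus $1-\eps/2$ on the middle portion, while the first coordinate is already near $\partial\D$ on the outer portions. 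The bookkeeping ($L_k/2 - T > k_\D(0,1-\eps)$ for $k$ large) checks out, and the concluding step that any compactum of $\D^2$ sits inside some $K_\eps$ is correct. Your formulation buys generality and transparency: the $1$-Lipschitz observation immediately exhibits a large family of non-trivial geodesics of the bidisc, whereas the paper's piecewise construction is more ad hoc. The paper's version has the minor advantage that the geodesic is literally a concatenation of geodesic segments so one sees that it \emph{is} a geodesic from the additivity of lengths, without needing the $1$-Lipschitz remark; but both routes are elementary.
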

\begin{proof}
%  Let $p'\to p$, $q'\to q$.
  If $x, y$  are close enough to $p$
and $q$ respectively, $\d_D(x)=1-|x_1|$, $\d_D(y)=1-|y_1|$, and using the projection on $\D\times\{0\}$,
$$
k_D(x,y)\ge k_{\D} (x_1,y_1)= -\frac12 \log(1-|x_1|^2) -\frac12 \log(1-|y_1|^2) +O(1).
$$
 So \eqref{logest} holds,
which is equivalent to \eqref{stdest} in this case.

Consider the points $p_\eps:=(-1+\eps,0)$, $q:=(1-\eps,0)$.
Then a geodesic  from $p_\eps$ to $q_\eps$ is given by the line segment $[-1+\eps,1-\eps] \times \{0\}$.
But another one can be obtained in the following way: let $c>1$ be chosen such that
$k_{\D}(0, 1- \eps^{1/2}) < k_{\D}(1-\eps, 1- c\eps^{1/2})$ (this is possible since
$k_{\D}(0, 1- \eps^{1/2}) = -\frac14 \log \eps +O(1) = k_{\D}(1-\eps, 1- c\eps^{1/2}) +O(1)$).
Let $\tilde p_\eps:= (-1+ c\eps^{1/2}, 1- \eps^{1/2})$, $\tilde q_\eps:= (1- c\eps^{1/2}, 1- \eps^{1/2})$.
Let $\gamma_\eps$ be the curve made up of a geodesic from $p_\eps$ to $\tilde p_\eps$,
followed by the geodesic from $\tilde p_\eps$ to $\tilde q_\eps$ (a horizontal line segment),
followed by a geodesic from $\tilde q_\eps$ to $q_\eps$.  Since $k_{D}(z,w)= \max (k_{\D}(z_1,w_1), k_{\D}(z_2,w_2))$,
we see that
\begin{multline*}
\ell  (\gamma_\eps) = k_{\D}(-1+\eps,-1+ c\eps^{1/2})+ k_{\D}(-1+ c\eps^{1/2}, 1- c\eps^{1/2})+
k_{\D}( 1- c\eps^{1/2},1-\eps)
\\
 = k_{\D}( -1+\eps,1-\eps),
\end{multline*}
so $\gamma_\eps$ is a geodesic segment too. But $\gamma_\eps \subset \D^2 \setminus \overline D(0,1- c\eps^{1/2})^2$
avoids any compact set in the bidisc
as $\eps\to 0$, so the pair $\{p,q\}$ does not have the visibility property.
\end{proof}

It remains an open question whether one can find such an example with $\partial D$  smooth.

One might ask whether the hypothesis $F_p\cap F_q=\emptyset$ of  Proposition~\ref{suffstd} can be weakened. A natural weaker condition is to assume that  $p\not\in F_q$ (which implies that also $q\not\in F_p$). Under such a condition we can prove:

\begin{prop}
\label{weaker}
Let $D$ be a convex domain (not necessarily bounded),
and let $p,q \in \partial D$ be such that $q\not\in F_p$. Then
\[
\limsup_{x\to p,y\to q}\left( \frac12 \max\{\log\frac{1}{\d_D(x)}, \log\frac{1}{\d_D(y)}\} -k_D(x,y) \right)<\infty.
\]
\end{prop}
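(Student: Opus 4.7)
The strategy is to adapt the proof of Proposition~\ref{suffstd} in a \emph{one-sided} fashion: rather than splitting a path from $x$ to $y$ into two halves controlled by tubes at both $p$ and $q$, I plan to use the half-plane estimate \eqref{halfhole} at $p$ alone to bound $k_D(x,y)$ below by $\frac12\log\frac{1}{\d_D(x)}-C$, and then symmetrically at $q$ to bound it below by $\frac12\log\frac{1}{\d_D(y)}-C$. Taking the maximum will then yield the claim. Note first that the hypothesis is symmetric in $p,q$: since a complex face has the form $L\cap\partial D$ for a complex line $L$ with $L\cap D=\emptyset$, the condition $q\in F_p$ amounts to the existence of such an $L$ containing both $p$ and $q$, which is symmetric. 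Hence $q\notin F_p$ also gives $p\notin F_q$.

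The key geometric observation is that $q\notin F_p$ forces $q$ to avoid every complex supporting hyperplane of $D$ at $p$: if $H$ is such a hyperplane and $q\in H$, then the complex line $L$ through $p$ and $q$ lies in the complex affine subspace $H$, so $L\cap D\subset H\cap D=\emptyset$; since $p,q\in L\cap\partial D$, this contradicts $q\notin F_p$. The set of complex supporting hyperplanes at $p$ is parameterized by a closed subset of $\mathbb{CP}^{n-1}$ and is therefore compact, while $H\mapsto\operatorname{dist}(q,H)$ is continuous and strictly positive on it. Hence there is $\eta_0>0$ such that $\operatorname{dist}(q,H)\ge 4\eta_0$ for every complex supporting hyperplane $H$ of $D$ at $p$.

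Now I would fix sequences $x_k\to p$, $y_k\to q$ in $D$, pick $p_k\in\partial D$ with $\|x_k-p_k\|=\d_D(x_k)$, and denote by $H_{p_k}$ the (unique) complex supporting hyperplane at $p_k$, as in the proof of Proposition~\ref{suffstd}. A standard subsequence/Hausdorff-limit argument then shows that $\operatorname{dist}(q,H_{p_k})\ge 2\eta_0$ for $k$ large: any subsequential limit of the $H_{p_k}$'s (Hausdorff on fixed balls) is a complex supporting hyperplane at $p$, hence at distance $\ge 4\eta_0$ from $q$. Consequently, for $k$ large, $x_k\in\mathcal{N}_{p_k}(\d_D(x_k))\subset\mathcal{N}_{p_k}(\eta_0)$ whereas $y_k\notin\mathcal{N}_{p_k}(\eta_0)$. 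Estimate \eqref{halfhole} depends only on the orthogonal projection onto the complex normal line at $p_k$ and so carries over verbatim without any boundedness assumption on $D$; applied to the set $D\setminus\mathcal{N}_{p_k}(\eta_0)$, which contains $y_k$, it yields
\[
k_D(x_k,y_k)\ge k_D\bigl(x_k,\,D\setminus\mathcal{N}_{p_k}(\eta_0)\bigr)\ge \frac12\log\frac{1}{\d_D(x_k)}-\frac12\log\frac{1}{\eta_0}.
\]
Exchanging the roles of $p$ and $q$ and using $p\notin F_q$ gives the symmetric estimate with $\d_D(y_k)$ in place of $\d_D(x_k)$, and taking the maximum of the two concludes.

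The only delicate step is the uniform lower bound $\operatorname{dist}(q,H_{p_k})\ge 2\eta_0$ as $p_k\to p$, which replaces the ``disjoint tubes'' argument of Proposition~\ref{suffstd}. This is precisely where the weaker hypothesis is really used, and it is the reason only the maximum (rather than the sum) of the two $\frac12\log\frac{1}{\d_D}$ terms survives in the conclusion.
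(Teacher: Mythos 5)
Your proof is correct, but it takes a genuinely different route from the paper's. The paper invokes the directional-distance lower bound from Nikolov--Trybu\l{}a \cite[p.~633]{NT}, involving $\d_D(x;v)$ and $\d_D(y;v)$ with $v=(x-y)/\|x-y\|$, and reduces everything to showing the claim $\d_D(x;v)\le c\,\d_D(x)$ (and its analogue at $y$) by a contradiction argument: a failing sequence would force the limiting direction $p-q$ to lie in a limiting complex supporting hyperplane $H_\infty$ at $p$, hence $q\in F_p$. You instead run the half-plane/tube argument of Proposition~\ref{suffstd} \emph{one-sidedly}: the compactness of the set of complex supporting hyperplanes at $p$ in $\mathbb{CP}^{n-1}$ yields a uniform $\eta_0>0$ with $\operatorname{dist}(q,H)\ge 4\eta_0$ for every such $H$, a Hausdorff-limit argument transfers this to the hyperplanes $H_{p_k}$ at the nearest boundary points $p_k$ to $x_k$, and \eqref{halfhole} applied to $\mathcal N_{p_k}(\eta_0)$ alone gives $k_D(x_k,y_k)\ge\frac12\log\frac1{\d_D(x_k)}-C$; symmetry in $p,q$ (which you correctly justify) gives the other half of the max. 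Both proofs rest on the same geometric core --- that the chord direction $p-q$ is transverse to every supporting hyperplane at $p$ when $q\notin F_p$ --- but your version is more self-contained (no appeal to \cite{NT}), extends transparently to unbounded $D$ since the tube estimate and the compactness in $\mathbb{CP}^{n-1}$ never use boundedness, and makes the reason why only the maximum (not the sum) survives visibly clear, namely that the two tubes $\mathcal N_{p_k}(\eta_0)$ and $\mathcal N_{q_k}(\eta_0)$ may overlap, so the two half-plane estimates cannot be concatenated along a single path as in Proposition~\ref{suffstd}. The paper's route is shorter once one has the \cite{NT} estimate in hand. One small point worth making explicit in a write-up: uniqueness of $H_{p_k}$ follows, as in the proof of Proposition~\ref{suffstd}, from the interior ball $B(x_k,\d_D(x_k))\subset D$ touching $\partial D$ at $p_k$.
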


\begin{proof}
By definition of multiface, if $q\not\in F_q$ then also $p\not\in F_q$.

For a  point
$z \in D$, and a vector $v \in \C^n$, we let
\begin{equation}
\label{dirdist}
\d_D (z;v):= \sup\left\{ r>0 : z+\lambda v \in D \quad \forall \lambda\in \C, |\lambda|<r \right\}.
\end{equation}
It follows by the estimate \cite[p.~633]{NT} that
\[
k_D(x,y)\ge\frac12 \log\left(1+\frac{1}{\min\{\log\d_D(x;\frac{x-y}{\|x-y\|}),\log\d_D(y;\frac{x-y}{\|x-y\|})\}}\right).
\]
We claim that
there is a constant $c$ so that
 $\d_D(x;\frac{x-y}{\|x-y\|})\le c \d_D(x)$ and $\d_D(y;x-y)\le c \d_D(y)$, for $x$ close
 enough to $p$ and $y$ close to $q$, from which the statement follows at once.

In order to prove the claim,  suppose by contradiction that there exist $x_k\to p$,  $y_k\to q$, such that
  \begin{equation}
  \label{incid}
  \d_D(x_k) =o\left(\d_D\left(x_k;\frac{x_k-y_k}{\|x_k-y_k\|}\right)\right).
  \end{equation}
  Then $v_k:=\frac{x_k-y_k}{\|x_k-y_k\|}\to v:= \frac{p-q}{\|p-q\|}$.
  Let $p_k\in \partial D$ be such that $\|x_k-p_k\|= \d_D(x_k)$,
  and let $H_k$ be the supporting complex hyperplane at $p_k$---note that it is unique because
  $B(x_k,\d_D(x_k))\subset D$ and $p_k\in \partial B(x_k,\d_D(x_k))$.
  Passing to a subsequence if needed, we may assume that $H_k$ converges
  to a supporting complex hyperplane $H_\infty$ at $p$. Then,
  decomposing the vectors $v_k$ in components parallel and orthogonal to $H_k$,
   \eqref{incid} implies
  that $v\in H_\infty$. This means that
  the direction $p-q$ is parallel to $F_p$, and by convexity, that $q\in F_p$,
contradicting the hypothesis.

A similar argument shows that $\d_D(y;x-y)\le c \d_D(y)$, for $y$ close to $q$, and we are done.
\end{proof}

The conclusion of the previous proposition cannot be improved: consider the bidisc $D:=\D^2$, and $p:=(1,0)$,
$q:=(0,1)$, $x:=(1-\eps,0)$, $y:=(0, 1-\eps)$. Then $q\notin F_p$, $p\notin F_q$,
but $F_p\cap F_q=\{(1,1)\}$ and
\[
k_{\D^2} (x,y)= \frac12 \log\frac{2-\eps }{\eps} \le \frac12 \log\frac{1 }{\d_{\D^2}(x)} +\log 2
=\frac12 \log\frac{1 }{\d_{\D^2}(y)} +\log 2.
\]

\section{Points of infinite type}\label{infinite-type}

By  \cite{Z1}  bounded smooth convex finite D'Angelo type domains are Gromov hyperbolic and so they have the visibility property by \cite{BGZ}. In this section we first prove Theorem~\ref{onept}, proving that, although not Gromov hyperbolic, smooth bounded convex domains whose boundary points are of finite type, except at most finitely many, have the visibility property.

To this aim, we need to set up some notation and preliminary notions.

In this section,  $D\subset \C^n$ is a bounded convex domain with smooth boundary.

Since any bounded domain with $\mathcal C^2$ boundary
satisfies the inner ball condition,
there exists $\d_c>0$ such that for any $z\in D$ with $\d_D (z) < \d_c$,
there exists a unique point $\zeta \in \partial D$ such that $\|z-\zeta\|=\d_D(z)$.
We denote $\zeta = \pi(z)$. The fiber $\pi^{-1}(\pi(z))$ is a subset of the real normal line
to $\partial D$ at $\pi(z)$. The map $z\mapsto (\pi(z), \d_D(z))$ is a diffeomorphism
from $\{z\in D: \d_D(z)<\d_c\}$ to $\partial D \times (0, \d_c)$.

Using the inner ball condition and an affine mapping,
and taking $\d_c$ smaller as needed, it is easy to see that
there exists a constant $A_1$ such that for all $p, q\in D$ such that
 $\pi( p )=\pi(q)$ and $\d_c \ge \d_D(p )\ge \d_D(q)$, then
\begin{equation}
\label{alongnormal}
k_D(p,q) \le \frac12 \log \frac{\d_D(p )}{\d_D(q)} + A_1.
\end{equation}

Note also that $K_c := \left\{ z \in D: \d_D (z) \ge \d_c\right\}$
is a compact set, and so bounded in the Kobayashi distance of $D$.

We recall the definition of a minimal basis at a point $z$,
and of the directional distances to the boundary $\tau_j(z)$, as given in \cite{NT}.

Let $z\in D$ and let $z^1:= \pi(z)$ and $\tau_1(z):=\lVert z^1-z\rVert= \d_D(z).$
Let
$H_1=z+(\C(z^{1}-z))^{\bot}$, where, for a complex  space $V\subset \C^n$ we let $V^\bot$ denotes the complex space orthogonal (with respect to the standard Hermitian product) to $V$. Let  $D_1=D\cap H_1.$ Note that $D_1$ is a convex domain in the affine complex space $H_1$ of dimension $n-1$ and $z\in D_1$.

Let
$z^2\in\partial D_1$ so that $\tau_2(z):=\lVert z^{2}-z\rVert=
\d_{D_1}(z).$ Let $H_{2}=z+(\textup{span}_\C(z^{1}-z,z^{2}-z))^{\bot},$
$D_{2}=D\cap H_{2}$ and iterate the construction for $n$ steps. Thus we get an orthonormal basis of
the vectors $ e_j=\frac{z^j-z}{\lVert z^{j}-z\rVert},$ $1\le j\le
n,$ which is called \emph{minimal} for $D$ at $z$, and positive numbers
$\tau_{1}(z)\le \tau_{2}(z)\le\dots\le \tau_{n}(z)$ (the basis and
the numbers are not uniquely determined).

From the last formula in the proof of  \cite[Lemma 1.3]{Gau} (p. 379, line 4) we have

\begin{lemma}
\label{tauest}
Let
$S\subset \partial D$ be  compact and let $U_1\subset \partial D$ be a neighborhood of $S$. Suppose there exists $M>0$ such that the type of $\zeta\in U_1$ is bounded from above by $M$.
Then there exist a neighborhood $U_2\subseteq U_1$ of $S$,  $\tilde \d >0$
and $C_1 >0$ such that for every $z\in D$ with $\d_D(z) \le \tilde \d$ and
$\pi(z)\in U_2$, we have  $\tau_j(z) \le C_1 \d_D(z)^{1/M}$ for all $1\le j \le n$.
\end{lemma}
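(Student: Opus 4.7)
The plan is to combine Gaussier's pointwise estimate \cite[Lemma 1.3]{Gau} with compactness to extract uniform constants. Since $S$ is compact, a finite cover of $S$ by boundary neighborhoods on each of which the estimate holds with local constants yields the global statement, with $U_2$ the union of the cover and $C_1,\td$ the worst of the local constants. Thus it suffices, given any $p_0\in S$, to prove the assertion for $\pi(z)$ in a small enough neighborhood $W$ of $p_0$ on $\partial D$. Fix such a $p_0$ and $W\subset U_1$, and let $r$ be a smooth local defining function of $D$ (negative inside $D$). The uniform type bound on $U_1$ together with convexity of $D$ and smoothness of $\partial D$ yields, by the reduction to convex polynomial models carried out in \cite{Gau}, a uniform polynomial estimate
\[ -r(\zeta+\lambda v)\ge c\,|\lambda|^M\quad\text{for all }|\lambda|\le\lambda_0, \]
valid uniformly for $\zeta\in W\cap\partial D$ and every unit complex tangent vector $v$ at $\zeta$; this is exactly the content of the formula on \cite[p.~379]{Gau} referenced just before the lemma.

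To conclude, I would unwind the minimal basis construction. For $j=1$, $\tau_1(z)=\d_D(z)\le\d_D(z)^{1/M}$ as soon as $\td\le 1$. For $j\ge 2$, $e_j$ is orthogonal to $e_1$ by construction, and $e_1$ coincides with the inward unit normal $\nu$ at $\pi(z)$, hence $e_j\in T^\C_{\pi(z)}\partial D$. The point $z+\tau_j(z)\,e_j$ lies on the boundary of the slice $D\cap H_{j-1}$ and so belongs to $\overline D$, i.e.\ $r(z+\tau_j(z)\,e_j)\ge 0$. Writing $z=\pi(z)-\d_D(z)\,\nu$ and applying the displayed estimate at $\zeta=\pi(z)$, $v=e_j$, together with convexity of $D$ along the real two-plane $\pi(z)+\R\nu+\R e_j$, gives
\[ c\,\tau_j(z)^M\le -r\bigl(\pi(z)+\tau_j(z)\,e_j\bigr)\le -r(z)+C_0\,\d_D(z)\le C_0'\,\d_D(z), \]
hence $\tau_j(z)\le C_1\d_D(z)^{1/M}$.

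The main obstacle is the uniformity in $\zeta$ of the polynomial lower bound on $r$: the pointwise estimate is classical under a finite-type hypothesis at a single boundary point, but uniformity over a neighborhood of $S$ requires precisely the hypothesis of a uniform upper bound $M$ for the type throughout $U_1$, together with smoothness of $\partial D$ so that the derivatives of $r$ are uniformly controlled. Beyond this, the argument is a direct reading of Gaussier's.
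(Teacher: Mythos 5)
Your strategy is the right one and matches what the paper tacitly delegates to Gaussier's Lemma 1.3: reduce by compactness, invoke a uniform polynomial lower bound for the defining function in complex tangent directions, and unwind the minimal-basis construction. The paper itself offers no argument beyond the citation to \cite{Gau}, so fleshing it out is exactly what is wanted.

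There are, however, two issues in the execution. First, the sign conventions are inconsistent: with $r<0$ inside $D$, the point $\zeta+\lambda v$ lies outside $\overline D$ when $v$ is a complex tangent vector at $\zeta$ (by convexity of $D$), so $r(\zeta+\lambda v)\ge 0$ and the finite-type estimate should read $r(\zeta+\lambda v)\ge c|\lambda|^M$, not $-r(\zeta+\lambda v)\ge c|\lambda|^M$; likewise $z+\tau_j(z)e_j\in\overline D$ gives $r\le 0$ there, not $\ge 0$. (Also, $e_1=\frac{\pi(z)-z}{\|\pi(z)-z\|}$ is the outward, not inward, normal, though this is harmless.) Second, and this is the substantive gap, the middle inequality in your displayed chain compares $r$ at $\pi(z)+\tau_j(z)e_j$ with $r(z)$; these two points differ by $\d_D(z)e_1+\tau_j(z)e_j$, so a Lipschitz bound produces an unwanted extra $O(\tau_j(z))$ term, and the appeal to convexity along a real two-plane does not obviously remove it. The correct comparison is with $z+\tau_j(z)e_j$, which lies on $\partial D$ (being a boundary point of the slice $D\cap H_{j-1}$ and hence of $D$), so $r$ vanishes there, and it differs from $\pi(z)+\tau_j(z)e_j$ only by $\d_D(z)e_1$; this gives
\[
c\,\tau_j(z)^M\le r\bigl(\pi(z)+\tau_j(z)e_j\bigr)\le r\bigl(z+\tau_j(z)e_j\bigr)+C_0\,\d_D(z)=C_0\,\d_D(z),
\]
which is the claim. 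With these corrections your argument goes through.
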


The following lemma is an immediate consequence of \cite[Theorem 1, (ii)]{NT}.

\begin{lemma}
\label{balls}
If $q,z \in D$ are such that $k_D(q,z)<r$, then
$$
\max_{1\leq
j\leq n}\frac{|z_j-q_j|}{\tau_j(q)}< e^{2r}-1,
$$
where $(z_1,\ldots, z_n)$ and $(q_1,\ldots, q_n)$ are the coordinates of $z$ and $q$ in the affine coordinates system $\{O; e_1,\ldots, e_n\}$, where $O$ is the origin in $\C^n$ and $\{e_1,\ldots, e_n\}$ is a minimal orthonormal basis for $D$ at $q$.
\end{lemma}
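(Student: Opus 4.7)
The plan is to extract from \cite[Theorem 1, (ii)]{NT} the lower bound
$$
k_D(q,z) \;\geq\; \tfrac{1}{2}\log\!\Bigl(1 + \max_{1\leq j\leq n}\tfrac{|z_j-q_j|}{\tau_j(q)}\Bigr),
$$
valid for any pair $q,z$ in a convex domain, with coordinates and $\tau_j(q)$ read in the minimal orthonormal basis at $q$. Granting this, the lemma is immediate: setting $M := \max_j |z_j-q_j|/\tau_j(q)$, the hypothesis $k_D(q,z)<r$ forces $\tfrac{1}{2}\log(1+M)<r$, whence $M<e^{2r}-1$, which is exactly the desired conclusion.

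The derivation of the cited bound, which carries the only real content, would go as follows. Fix an index $j$, let $\pi_j:\C^n\to \ell_j := q+\C e_j$ denote the orthogonal complex-linear projection onto the $j$-th axis of the minimal basis, and identify $\ell_j$ with $\C$ via $q+\lambda e_j \leftrightarrow \lambda$, so that $q \leftrightarrow 0$. The minimal-basis construction places $z^j = q+\tau_j(q)e_j$ in $\partial D$ with $B(q,\tau_j(q)) \subset D_{j-1}\subset D$ tangent to $\partial D$ at $z^j$; hence the real affine hyperplane $T := \{w : \Re\langle w-z^j, e_j\rangle = 0\}$ supports $D$ at $z^j$, and a direct computation shows $\pi_j(D) \subset \mathbb H_j := \{\lambda : \Re(\lambda) < \tau_j(q)\}$, an affine half-plane with $0$ at distance $\tau_j(q)$ from its boundary. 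Distance-decrease under the holomorphic map $\pi_j$, together with monotonicity of the Kobayashi distance under inclusion, yields
$$
k_D(q,z) \;\geq\; k_{\pi_j(D)}(0, z_j) \;\geq\; k_{\mathbb H_j}(0, z_j).
$$
A routine computation on $\mathbb H_j$, which after a biholomorphic identification with the upper half-plane reduces to the triangle inequality $|2\tau_j(q)-z_j| \leq 2\tau_j(q)+|z_j|$, then gives $k_{\mathbb H_j}(0,z_j) \geq \tfrac{1}{2}\log(1+|z_j|/\tau_j(q))$. Taking the maximum over $j$ produces the claimed inequality.

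The main technical subtlety — already packaged in \cite{NT} but requiring some care in a self-contained write-up — is the half-plane estimate for an arbitrary direction of $z_j-q_j$, not merely along the inward normal; it is a standard but not entirely transparent half-plane calculation. Once granted, the lemma collapses to the single line of logarithmic algebra in the first paragraph.
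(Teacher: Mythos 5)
The paper gives no independent proof here: the statement is declared to be ``an immediate consequence of \cite[Theorem 1, (ii)]{NT}'', which is precisely the outer inclusion $B_{k_D}(q,r)\subset\{z: \max_j |z_j-q_j|/\tau_j(q)<e^{2r}-1\}$ read in the minimal basis at $q$. Your closing algebra is correct: from $k_D(q,z)\ge\tfrac12\log\bigl(1+\max_j|z_j-q_j|/\tau_j(q)\bigr)$ and $k_D(q,z)<r$ one indeed gets $\max_j|z_j-q_j|/\tau_j(q)<e^{2r}-1$, and your half-plane estimate $k_{\{\Re\lambda<\tau\}}(0,\lambda_0)\ge\tfrac12\log(1+|\lambda_0|/\tau)$ for $\Re\lambda_0<\tau$ is also fine (it reduces to the triangle inequality, exactly as you say).

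The gap is in the geometric step: you claim that the orthogonal projection $\pi_j$ onto $\ell_j=q+\C e_j$ sends $D$ into $\{\Re\lambda<\tau_j(q)\}$, equivalently that the real hyperplane $T=\{w:\Re\langle w-z^j,e_j\rangle=0\}$ supports $D$ at $z^j$. This is forced only for $j=1$. For $j\ge 2$ the tangency of the ball of radius $\tau_j(q)$ at $z^j$ takes place inside the slice $H_{j-1}$ (your inclusion ``$B(q,\tau_j(q))\subset D_{j-1}\subset D$'' already conflates the ambient ball with the ball in $H_{j-1}$); it only forces the supporting hyperplane of $D$ at $z^j$ to have trace orthogonal to $e_j$ \emph{within} $H_{j-1}$, while its ambient normal may have large components along $e_1,\dots,e_{j-1}$, so $\pi_j(D)$ can spill past $z^j$. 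Concretely, in $\C^2$ take $D=\{\Re z_1<1,\ \Re(z_1+z_2)<3/2\}\cap B(0,10)$ and $q=0$: then $e_1=(1,0)$, $\tau_1(q)=1$, $e_2=(0,1)$, $\tau_2(q)=3/2$, but $(-3,4)\in D$ and $\pi_2(-3,4)=4$ has $\Re=4>3/2$, so $\pi_2(D)\not\subset\{\Re\lambda<3/2\}$; the unique supporting hyperplane at $z^2=(0,3/2)$ has normal proportional to $(1,1)$, not to $e_2$. The conclusion of the lemma is of course still true --- it is a theorem of \cite{NT} --- but the proof there cannot be the plain orthogonal-projection argument: one has to project along the complex tangent hyperplane at $z^j$ (a non-orthogonal linear projection), and then control the cross-terms in the coordinates $e_1,\dots,e_{j-1}$ that this introduces. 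That, rather than the half-plane calculation you flag at the end, is where the real content lies; as written, your argument proves the case $j=1$ only.
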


We now prove that if two points with comparable distance from the
boundary are joined by a geodesic that stays as close to the boundary as they are,
then they must be close to each other in the Euclidean distance.
This is achieved by comparing the Kobayashi length of a curve staying close enough to
the boundary with an upper bound for the Kobayashi distance between its extremities.

\begin{lemma}
\label{sameheight}
Let
$S\subset \partial D$ be  compact and let $U_1\subset \partial D$ be a neighborhood of $S$. Suppose there exists $M>0$ such that the type of $\zeta\in U_1$ is bounded from above by $M$. Let $\d \in (0,\min\{\d_c, e^{-1}\})$.
Suppose $p,q \in D$ and $\d_D(p ) , \d_D(q )
\in [\d/2 , \d ]$. Let $\gamma$ be a geodesic joining $p$ and $q$ such that
$\d_D(\gamma(t)) \le \d$ and
$\pi(\gamma (t)) \in S$ for any $t$.
Then there exist constant $A_2, C_2>0$, depending only on $S$ and $D$, such that
$$
\left\|p-q \right\| \le C_2 \d^{1/M} \left( \log \frac1\d + A_2 \right).
$$
\end{lemma}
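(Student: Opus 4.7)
The plan is to combine two opposing estimates. First, I would establish an upper bound $L := k_D(p,q) \le \log(1/\delta) + A'$ by constructing an explicit comparison path that avoids $\gamma$ entirely. Second, I would apply Lemma \ref{balls} along a partition of $\gamma$ into short pieces to show that the Euclidean displacement per unit of Kobayashi length is of order $\delta^{1/M}$. Multiplying these two bounds produces the stated estimate $\|p-q\| \le C_2 \delta^{1/M}(\log(1/\delta) + A_2)$.

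For the upper bound on $L$, I push $p$ along its inward normal to the unique point $p_c$ with $\delta_D(p_c) = \delta_c$; by \eqref{alongnormal} this costs at most $\frac{1}{2}\log(\delta_c / \delta_D(p)) + A_1 \le \frac{1}{2}\log(2\delta_c / \delta) + A_1$. Symmetrically I move $q$ to $q_c$ at depth $\delta_c$. Since $D$ is bounded and complete hyperbolic, $K_c$ is $k_D$-bounded; write $D_c$ for its $k_D$-diameter. Concatenating $p \to p_c \to q_c \to q$ yields $L \le \log(1/\delta) + A'$ for $A' := 2A_1 + D_c + \log(2\delta_c)$.

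For the spatial estimate, I parametrize $\gamma : [0, L] \to D$ by Kobayashi arc length and partition $[0, L]$ into $N := \lceil 2L \rceil$ equal subintervals $[t_{k-1}, t_k]$, each of Kobayashi length $L/N \le 1/2$. For each $k$, set $z_k := \gamma(t_{k-1})$; applying Lemma \ref{balls} at $z_k$ with radius $L/N$, expressing $\gamma(t_k) - z_k$ in a minimal basis for $D$ at $z_k$, I obtain $|(\gamma(t_k) - z_k)_j| \le \tau_j(z_k)(e^{2L/N} - 1) \le (e-1)\tau_j(z_k)$. By hypothesis $\pi(z_k) \in S$ and $\delta_D(z_k) \le \delta$, so choosing $U_2$ in Lemma \ref{tauest} to be a small enough neighborhood of $S$, for $\delta$ below the threshold $\tilde\delta$ furnished by that lemma one has $\tau_j(z_k) \le C_1 \delta^{1/M}$; the regime $\delta > \tilde\delta$ is absorbed into the final constants since $\|p-q\| \le \mathrm{diam}(D)$. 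As the Euclidean norm is invariant under orthonormal change of basis, $\|\gamma(t_k) - z_k\| \le \sqrt{n}(e-1)C_1 \delta^{1/M}$, and summing via the triangle inequality together with $L \le \log(1/\delta) + A'$ yields
\[
\|p-q\| \le N \sqrt{n}(e-1) C_1 \delta^{1/M} \le (2L + 1)\sqrt{n}(e-1)C_1 \delta^{1/M} \le C_2 \delta^{1/M}\bigl(\log(1/\delta) + A_2\bigr)
\]
for suitable $C_2, A_2 > 0$ depending only on $D$ and $S$. The main subtlety is the interplay between Lemma \ref{balls}, whose bound degrades exponentially in the Kobayashi radius, and the need to traverse a geodesic of Kobayashi length possibly as large as $\log(1/\delta)$: this forces the chain argument, where each short subinterval contributes only the bounded factor $(e-1)$ while the number of subintervals supplies precisely the $\log(1/\delta)$ factor in the conclusion.
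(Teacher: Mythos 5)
Your proof is correct and follows essentially the same strategy as the paper's: bound the Kobayashi length of the geodesic from above by $\log(1/\delta)+O(1)$ via a comparison path through $K_c$, partition the geodesic into segments of bounded Kobayashi length, apply Lemma~\ref{balls} and Lemma~\ref{tauest} on each segment to get Euclidean displacement $O(\delta^{1/M})$ per segment, and sum. The only differences are cosmetic: the paper partitions at integer parameter values (so segment length $1$ and factor $e^2-1$, whereas you use $\lceil 2L\rceil$ pieces of length $\le 1/2$ and factor $e-1$), and presents the spatial estimate before the length bound rather than after. Both handle the same edge cases ($k_D(p,q)\le 1$, respectively $\delta>\tilde\delta$) by absorbing them into constants.
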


\begin{proof}
In what follows, $C$ stands for a constant whose value may change from line to line.

Consider two points $z,w$ such that $\pi(z), \pi(w) \in S$ and
$k_D (z,w) \le 1$. Then Lemma \ref{balls} implies that
$|z_j-w_j| \le (e^2-1)\tau_j(z)$ for $1\le j \le n$ (where $(z_1,\ldots, z_n)$ and $(w_1,\ldots, w_n)$ are the coordinates of $z$ and $w$ in a minimal orthonormal basis for $D$ at $z$).

Then, by Lemma \ref{tauest},
\begin{equation}
\label{kobaest}
\left\| z - w \right\| \le \sqrt{n} \max_{1\le j \le n} |z_j-w_j|
\le C \max_{1\le j \le n} \tau_j(z)
\le C \d_D (z)^{1/M} \le C \d^{1/M}.
\end{equation}

In particular, if $k_D(p,q)\leq 1$, the result holds whenever $C_2\geq C$ and $A_2\geq 0$ (since $\log \frac1\d>1$ 
because $\d<e^{-1}$).

We can thus assume that $k_D(p,q)> 1$. Write $k_D(p,q)=m+s$, with $m\in \mathbb N$, $m\geq 1$ and $s\in [0,1)$, and assume $\gamma(0)=p$ and $\gamma(m+s)=q$.   In particular, the hyperbolic length $\ell(\gamma)$ of $\gamma$ is greater than or equal to $m$. Since $k_D(\gamma(k+1), \gamma(k))=1$ for $k=0,\ldots, m-1$ and $k_D(\gamma(m), \gamma(m+s))\leq 1$, by \eqref{kobaest}, we have for $k=0,\ldots, m-1$,
\[
\left\| \gamma(k+1) - \gamma(k) \right\| \le C \d^{1/M},
\]
and $\left\| \gamma(m+s) - \gamma(m) \right\| \le C \d^{1/M}$.
By the triangle inequality,
\begin{equation}\label{stim-12}
\left\|p-q \right\| \le (m+1) C \d^{1/M}\leq (\ell(\gamma)+1)C \d^{1/M}.
\end{equation}

Now, let $p_0, q_0\in D$ be such that
$\pi ( p_0 )=  \pi ( p )$ and $\pi ( q_0 )=  \pi ( q )$ and $\d_D (p_0)=\d_D (q_0)=\d_c$.  Since $\gamma$ is a geodesic, and using \eqref{alongnormal}, we have
$$
\ell (\gamma) \le k_D (p,p_0) + k_D (p_0,q_0) +  k_D (q_0,q)
\le \log \frac{2\d_c}{\d} + 2A_1 + \mbox{diam} K_c,
$$
where $\mbox{diam} K_c$ denotes the diameter in the Kobayashi distance of $K_c$, which is finite.

Putting together the previous inequality and \eqref{stim-12} we have the result.
\end{proof}

Now we are in a position to prove Theorem~\ref{onept}.

\begin{proof}[Proof of Theorem \ref{onept}]
Let $p, q\in \partial D$. 
We aim to show that there exists $\d_{p,q}>0$ such that for any $\{p_k\}, \{q_k\}\subset D$ converging to $p$ and $q$ respectively, for $k$ large
enough, any geodesic from $p_k$ to $q_k$ intersects $\{ z\in D: \d_D (z)
\ge \d_{p,q}\}$.

We argue by contradiction and suppose there is no such $\d_{p,q}$.
Let $\gamma_k$ be a geodesic such that $\gamma_k(0)=p_k$ and $\gamma_k(R_k)=q_k$ for some $R_k>0$.
Then, for any $\d>0$ we can find $k_\d$  so that $\d_D(\gamma_k(t)) \le \d$ for any $t$ and $k\geq k_\d$.
\smallskip

{\it Case 1.}  Either $q$ or $p$ (or both)  is of finite type.
\smallskip

Without loss of generality, we may assume that $q$ is a point of finite type.  
For $r$ small enough,  any point in $S:=B(q,r)\cap \partial \Omega$ is of finite 
type bounded by $M$, and $p\notin \bar S$. Let $U:= \pi^{-1}(S) \subset \Omega\setminus K_c$. 
For $k$ large enough, $q_k\in U$.  Given a geodesic $\gamma_k$, 
by the choice of $S$, for $k$
large enough, $\gamma_k([0,1])\not \subset \bar U$. Let 
$$t_p:=\sup\{t \in [0,1): \gamma_k(t)\notin U\}$$ 
and $p'_k:=
\gamma_k(t_p)$. Then $\gamma_k|_{[0,t_p]}$ is a geodesic, and we have reduced ourselves 
to the case where $\gamma_k([0,1]) \subset \bar U$, $\|p_k-q_k\|\ge r/2$.

We make a further reduction by considering $t_m$ such that 
$$
\delta_\Omega(\gamma_k(t_m))
=\max_{t\in [0,1]} \delta_\Omega(\gamma_k(t)). 
$$
Then either $\|p_k-\gamma_k(t_m)\|\ge r/4$
or $\|q_k-\gamma_k(t_m)\|\ge r/4$, so by restricting the domain of
$\gamma_k$ again we may assume that we have a geodesic, denoted again
by $\gamma_k$, from a point $p_k$ to a point $q_k$, with $\|p_k-q_k\|\ge r/4 $,
such that for all $t\in[0,1]$, 
$$
\delta \ge \delta_0:= \delta_\Omega(p_k) =  \delta_\Omega(\gamma_k(0)) \ge \delta_\Omega(\gamma_k(t)).
$$
Let $t_0=0$ and for any integer $j \le \log_2 \left( \delta_\Omega(p_k)/\delta_\Omega(q_k) \right)$,
$$
t_j:= \sup\left\{ t\in [0,1): \delta_\Omega(\gamma_k(t)) \ge 2^{-j} \delta_0 \right\},
$$
so that $t_0< \dots <t_j<t_{j+1}<\dots <t_s <1=: t_{s+1}$, for some integer $s\ge 0$,
and, since $\gamma_k$ is continuous, $\delta_\Omega(\gamma_k(t_j)) =  2^{-j} \delta_0 $.

Then Lemma \ref{sameheight} implies that for $j\le s$,
$$
\left\| \gamma_k(t_j) - \gamma_k(t_{j+1}) \right\|
\le 
C_2 \delta_0^{1/M} 2^{-j/M} \left( \log \frac{2^{j}}{\delta_0} + A_2 \right),
$$
so that 
$$
\frac{r}4 \le \|p_k-q_k\|
\le C'_2 \delta_0^{1/M}  
\left( \log \frac1{\delta_0} + A_2 \right) \sum_{j=0}^s 2^{-j/M} (j + 1).
$$
The last sum is bounded by a constant independent of $s$, and the factor involving 
$\delta_0$ can be made arbitrarily small by reducing $\delta$, so we arrive at the
desired contradiction since $r$ depends only on $p$ and $q$. 

\smallskip
{\it Case 2.} $p$ and $q$ are of infinite type.
\smallskip

Let $E\subset \partial D$ be the set of points of infinite type different from $p$ and $q$. By hypothesis, $E$ is finite. For $r>0$ let $E_r:= \{ z\in \C^n: \mbox{dist}(z,E) < r\}$.

Let $r>0$ be such that
$\overline{B(p,r)} \cap \overline{B(q,r)} =\emptyset$  and  $(\overline{B(p,r)} \cup \overline{B(q,r)}) \cap \overline{E_r}=\emptyset$.
Then $U_1:=\partial D \setminus \left( \overline{B(p,r/2)} \cup \overline{B(q,r/2)} \cup \overline{E_{r/2}}\right)$ is an open set in $\partial D$ formed by points of type at most $M$ for some $M>0$ (since the type of a point is a locally bounded function). Let $S:=\partial D\setminus  \left( B(p,r) \cup B(q,r) \cup E_{r}\right)$. Then $S\subset U_1$ is a compact set, and its
points are at distance at least $r/2$ from $E \cup \{p,q\}$.

Now we can argue as before. Given two sequences $\{p_k\}$ and $\{q_k\}$ in $D$ converging to $p$ and $q$ respectively, if $\gamma_k$ is a geodesic joining $p_k$ and $q_k$ such that for every $\d>0$ there exists $k_\d$ so that $\d_D(\gamma_k(t))\leq \d$ for all $k\geq k_\d$, we can easily find an interval $[t_k, s_k]$ such that $\pi(\gamma_k(t))\in S$ for all $t\in [t_k, s_k]$ and $\|\gamma(t_k)-\gamma(s_k)\|\geq c$ for some constant $c>0$ (and $k$ large enough), getting a contradiction by
using the estimates above.
\end{proof}

In case the set of points of infinite type is not finite, the (bounded convex, smooth domain) $D$ might or might not have the visibility property. Roughly speaking, the visibility property  depends in a critical way on the rate of approach of the boundary to its complex tangent space. We give  an example below.

Let $\psi, \chi_1, \chi_2 \in \mathcal C^\infty (\R,\R_+)$ be even functions,
 strictly increasing on $\R_+$.
Let $\Omega_\psi$ be a convex domain such that
\begin{equation}
\label{domcplxtgt}
\begin{split}
&\Omega_\psi \cap \{ \|z\| <3 \}  =
\\
&\left\{ z \in \C^2: \|z\| <3, \Re z_2 >
 \psi (\Re z_1) + \chi_1 ( (|\Im z_1| - 2)_+) + \chi_2(\Im z_2)
\right\}.
\end{split}
\end{equation}
We also assume that $\Omega_\psi$ is smoothly bounded and contained in the ball $B(0,5)$.
Notice that $\partial \Omega_\psi \cap \C\times \{0\} = [-2i,2i]\times \{0\}$.
We choose $p:= (i, 0)$, $q := (-i, 0)$. By choosing $\chi_1, \chi_2$ convex,
we can ensure that $\partial \Omega_\psi$ is strictly pseudoconvex away from the line segment $\{0\} \times [-2i;2i]$.

If  for some $\epsilon>0$
$$\int^\epsilon_0 \psi^{-1}(u) \frac{du}u < \infty,$$
 the domain $\Omega_\psi$
satisfies the Goldilocks condition from Definition \ref{gold}, and therefore has the visibility property,
by \cite[Theorem 1.4]{BZ}.   This is achieved, for instance, when
$\psi(x) \ge \exp\left( -\frac1x (\log \frac1x)^{-\alpha} \right)$, for some $\alpha>1$,
because then
\[
 \psi^{-1}(u) \le \frac1{\left(\log \frac1u\right) \left(\log \log \frac1u \right)^\alpha  }.
 \]
 In contrast, taking $o$ some inner point in $\Omega_\psi$, we have
\begin{prop}
\label{bdrygeod}
If $\psi(x)= o\left( \exp\left( - \frac\pi{2x}  \right) \right)$ near $ x=0$, then
$\limsup_{(p', q')\to (p,q)} (p'|q')_o = \infty$, and
therefore $\Omega_\psi$ does not have the visibility property.
\end{prop}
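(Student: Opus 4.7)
The strategy is to exhibit explicit sequences $p'_\d\to p$ and $q'_\d\to q$ in $\Omega_\psi$ for which the Gromov product $(p'_\d|q'_\d)_o$ diverges to $+\infty$ as $\d\to 0^+$; by Proposition~\ref{necvis}, this forces failure of visibility for the pair $\{p,q\}$. I take $p'_\d := (i,\d)$ and $q'_\d := (-i,\d)$. From \eqref{domcplxtgt} and the fact that $\psi(0)=\chi_1(0)=\chi_2(0)=0$, both points lie in $\Omega_\psi$, converge to $p$ and $q$ respectively, and satisfy $\d_{\Omega_\psi}(p'_\d)=\d_{\Omega_\psi}(q'_\d)=\d$ with closest boundary points $p$ and $q$. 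The basic estimate \eqref{dumbest} then yields $k_{\Omega_\psi}(p'_\d, o) + k_{\Omega_\psi}(q'_\d, o) \geq \log(1/\d) - C_0$ for some constant $C_0$ depending only on $o$.

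For the upper bound on $k_{\Omega_\psi}(p'_\d, q'_\d)$, I exploit the one-dimensional slice $L_\d := \C\times\{\d\}$. The intersection $L_\d \cap \Omega_\psi$, parameterized by the holomorphic map $w\mapsto (w,\d)$, is holomorphically equivalent to the planar convex domain
\[
D_\d := \{w\in\C : \psi(\Re w) + \chi_1((|\Im w|-2)_+) < \d\},
\]
which contains both $i$ and $-i$. The distance-decreasing property then gives $k_{\Omega_\psi}(p'_\d, q'_\d) \leq k_{D_\d}(i,-i)$.

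The crucial quantitative step will be
\[
k_{D_\d}(i,-i) \;\leq\; \frac{\pi}{2\Xi(\d)} + C_1,
\]
where $\Xi(\d) := \psi^{-1}(\d)$ and $C_1$ is an absolute constant. Since $D_\d$ contains the rectangle $R_\Xi := (-\Xi,\Xi) \times (-2,2)$, it suffices to bound $k_{R_\Xi}(i,-i)$. This is the main obstacle: the inclusion $R_\Xi \subset S_\Xi := \{|\Re w|<\Xi\}$ only yields the \emph{lower} bound $k_{R_\Xi}(i,-i)\geq k_{S_\Xi}(i,-i)=\pi/(2\Xi)$, while the naive upper bound using Euclidean discs of radius $\Xi$ centered on the imaginary axis produces only $2/\Xi=(4/\pi)\cdot\pi/(2\Xi)$, which is too weak (by a factor $4/\pi>1$) to beat the lower bound $\log(1/\d)$. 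To recover the sharp constant $\pi/2$, I would invoke the Schwarz--Christoffel uniformization $R_\Xi \cong \D$ via Jacobi elliptic functions whose modulus is $k \sim 4e^{-\pi/\Xi}$, and then use the $k\to 0$ asymptotic expansion of $\operatorname{sn}$, $\operatorname{cn}$, $\operatorname{dn}$ to show that the Poincar\'e metric of $R_\Xi$ satisfies $\k_{R_\Xi}((0,t);1) = \pi/(4\Xi) + O(e^{-\pi/\Xi})$ uniformly in $|t|\leq 1$; integrating this estimate along the vertical geodesic $t\in[-1,1]$ will then give the required bound.

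To conclude, the hypothesis $\psi(x) = o(\exp(-\pi/(2x)))$ translates, via the substitution $y=\psi(x)$, into $\log(1/\d) - \pi/(2\Xi(\d)) \to +\infty$ as $\d\to 0^+$. Combining the three estimates gives
\[
2(p'_\d | q'_\d)_o \;\geq\; \log\frac{1}{\d} - \frac{\pi}{2\Xi(\d)} - (C_0 + C_1) \;\longrightarrow\; +\infty,
\]
and Proposition~\ref{necvis} finishes the argument.
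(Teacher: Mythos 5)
Your plan is structurally identical to the paper's: the same test points $(i,\delta)$ and $(-i,\delta)$, the same use of \eqref{dumbest} for the lower bound $\log(1/\delta)-C_0$, the same reduction via the slice $\C\times\{\delta\}$ to the hyperbolic distance in a long thin rectangle $R_\Xi\approx(-\Xi,\Xi)\times(-2,2)$ with $\Xi=\psi^{-1}(\delta)$, the same target estimate $k_{R_\Xi}(i,-i)\le \pi/(2\Xi)+C_1$, and the same final comparison with the hypothesis on $\psi$. The one place you diverge, and the step you rightly flag as the crux, is how to prove that sharp upper bound with the constant $\pi/2$ rather than the lossy $2$ one gets from Euclidean disc comparison. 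You propose invoking the Schwarz--Christoffel map of the rectangle via Jacobi elliptic functions and the $k\to 0$ asymptotics of $\operatorname{sn}$, $\operatorname{cn}$, $\operatorname{dn}$. That would work, but it is left as a sketch, and the uniform $O(e^{-\pi/\Xi})$ error estimate for the density along the short axis is not a standard quotable fact; if you go this route you have to actually carry out the asymptotic computation. The paper instead avoids special functions entirely: after rotating and rescaling $R_\Xi$ to $R'=(-2/\Xi,2/\Xi)\times(-1,1)\subset\{|\Im z|<1\}$, it applies the elementary strip-to-disc map $\phi(z)=\tfrac{e^{\pi z/2}-1}{e^{\pi z/2}+1}$, notes that $\phi(R')\supset D(0,\,1-2e^{-\pi/\Xi})$ while the two marked points $\pm 1/\Xi$ land at $\pm\tfrac{1-e^{-\pi/(2\Xi)}}{1+e^{-\pi/(2\Xi)}}$, and reads off $k_{R'}(1/\Xi,-1/\Xi)\le \log 2+\pi/(2\Xi)$ by a one-line computation in the smaller disc. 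The factor-$2$ buffer you built in (the marked points sit only halfway along the long axis) is exactly what makes the disc-containment argument close without loss in the leading constant, so the two methods give the same bound --- the paper's just gets there with much lighter machinery, and with the gap actually closed.
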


\begin{proof}
Let $p_\eps := (i, \eps)$, $q_\eps := (-i, \eps)$, which tend to $p$ and $q$ respectively as $\eps\to 0$. We claim that
\begin{equation}\label{Eq-bound-Kob-spec}
k_{\Omega_\psi} (p_\eps,q_\eps) \le \log 2 + \frac\pi{2\psi^{-1}(\eps)}.
\end{equation}

In order to prove \eqref{Eq-bound-Kob-spec}, we will bound $k_{\Omega_\psi} (p_\eps,q_\eps)$ by constructing an almost explicit analytic disc containing $p_\eps$ and $q_\eps$.
Let $\omega_\eps:= \{ \zeta \in \C: (\zeta,\eps) \in {\Omega_\psi}\}$. For $\eps$ small
enough, by convexity, $(\C \times \{\eps\}) \cap {\Omega_\psi} \subset \{ \|z\| <3 \}$, so for those values
of $\eps$,
$(\zeta,\eps) \in {\Omega_\psi}$
if and only if
$$
\eps >\psi \left(|\Re \zeta|\right) + \chi_1 \left( (|\Im \zeta| - 2)_+ \right) ,
$$
and, in particular,
$$
\omega_\eps \supset \left\{ |\Im \zeta| < 2, \psi (\Re \zeta) < \eps \right\}=:R_\eps.
$$
Let $\eps':= \psi^{-1}(\eps)$ and
$$
R'_\eps:= \frac{i}{\eps'} R_\eps = \left\{ |\Re \zeta| <\frac{2}{\eps'}, |\Im \zeta| < 1 \right\}.
$$
Then
$$
k_{\Omega_\psi} (p_\eps,q_\eps)\le k_{\omega_\eps }(i,-i) \le k_{R_\eps }(i,-i)
=k_{R'_\eps } \left(\frac{1}{\eps'},-\frac{1}{\eps'}\right).
$$
The conformal map $\phi(z):= \frac{e^{\frac\pi2z}-1}{e^{\frac\pi2z}+1}$
maps the strip $\{|\Im \zeta| < 1 \}$ to the unit disc. The line segments $ \left\{ \Re \zeta =\pm \frac{2}{\eps'} \right\}$
are mapped to arcs of circles perpendicular to the unit circle which intersect the diameter $(-1,+1)$
at the points $\frac{e^{\frac\pi{\eps'}}-1}{e^{\frac\pi{\eps'}}+1}$ and
$\frac{e^{-\frac\pi{\eps'}}-1}{e^{-\frac\pi{\eps'}}+1}$, so that
$$
\phi\left( R'_\eps \right) \supset D\left( 0, 1- 2 e^{-\frac\pi{\eps'}}\right),
$$
while $\phi\left(\pm\frac{1}{\eps'}\right)= \pm \frac{1-e^{-\frac\pi{2\eps'}}}{1+e^{-\frac\pi{2\eps'}}}$.

By renormalizing the smaller disc, for $\eps'$ small enough,
$$
k_{R'_\eps } \left(\frac{1}{\eps'},-\frac{1}{\eps'}\right)
\le k_{\D} \left(1-e^{-\frac\pi{2\eps'}}, -1+e^{-\frac\pi{2\eps'}}\right)
= 2 k_{\D} \left(1-e^{-\frac\pi{2\eps'}},0\right)
\le \log 2 + \frac\pi{2\eps'},
$$
and \eqref{Eq-bound-Kob-spec} is proved.

To conclude the proof, notice that
\[
\frac12 \log \frac1{\d_{\Omega_\psi}(p_\eps)} + \frac12 \log \frac1{\d_{\Omega_\psi}(q_\eps)} = \log \frac1\eps,
\]
and our hypothesis says that $\eps = o\left( \exp\left( - \frac\pi{2 \psi^{-1}(\eps) }  \right) \right)$,
i.e. $\log \eps + \frac\pi{2 \psi^{-1}(\eps) } \to -\infty$ as $\eps\to0$. By \eqref{Eq-bound-Kob-spec}, this implies
\[
-\frac12 \log \frac1{\d_{\Omega_\psi}(p_\eps)} - \frac12 \log \frac1{\d_{\Omega_\psi}(q_\eps)}
+ k_\Omega (p_\eps,q_\eps)  \le \log \eps + \frac\pi{2 \psi^{-1}(\eps) } \to -\infty,
\]
and we are done.
\end{proof}

Note that the previous condition is analogous to the ``log-type" condition in \cite{LW}, but slightly less demanding.

\section{Localization}
\label{localization}

The  results from Section \ref{visgro} are  obtained with the
help of a global hypothesis, namely convexity of the whole domain.  It seems natural to look for
analogues with suitably localized hypotheses.

To this aim we introduce and study two objects: the {\sl $k$-points} and the {\sl locally $\C$-strictly convex point}.

\subsection{$k$-points}

\begin{defn}
\label{kpt}
Let $D$ be a domain in $\C^n.$
We say that $p\in\partial D$ is  a \emph{ $k$-point} if for every neighborhood $W$ of $p$,
$$
\liminf_{z\to p}[k_D(z,W^c)+\frac12 \log \d_D(z)]>-\infty,
$$
where $\ds k_D(z,W^c):=\inf_{w\in D\setminus W}k_D(z,w).$
\end{defn}

If all boundary points are $k$-points, in many cases, the domain enjoys the visibility
property.  We recap the consequences of previous results.

\begin{prop}
\label{kpteq}
Let $D$ be a bounded convex domain  in $\C^n$.
\begin{enumerate}[(i)]
\item
If for any $q \in \partial D \setminus \{p\}$,  $\limsup_{(x, y)\to (p,q)} (x|y)_o < \infty$,
then $p$ is a $k$-point.
\item
If, furthermore, $\partial D$ is Dini-smooth, then the following are equivalent:
\begin{enumerate}[(a)]
\item
any $p\in \partial D$ is a $k$-point;
\item
for any $p\neq q \in \partial D$,
$\limsup_{(x, y)\to (p,q)} (x|y)_o < \infty$;
\item
$D$ enjoys the visibility property.
\end{enumerate}
\end{enumerate}
\end{prop}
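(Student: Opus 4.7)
The plan splits into the three implications of (ii), together with a separate proof of (i). Two of the implications in (ii) come essentially for free: (b)$\Leftrightarrow$(c) is exactly Propositions~\ref{necvis} and~\ref{suffvis}, and once (i) is proved, (b)$\Rightarrow$(a) follows by applying (i) at each $p\in\partial D$. Thus the genuine work lies in (i) and (a)$\Rightarrow$(b).

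For (i), I would argue by contradiction. Negating the $k$-point condition at $p$ furnishes a neighborhood $W\ni p$ and sequences $z_k\to p$ in $D$ and $w_k\in D\setminus W$ with $k_D(z_k,w_k)+\tfrac12\log\d_D(z_k)\to-\infty$. Passing to a subsequence, $w_k\to q\in\overline{D}\setminus\{p\}$. The subcase $q\in D$ is killed at once by \eqref{dumbest}: then $\d_D(w_k)$ is bounded below, and \eqref{dumbest} yields $k_D(z_k,w_k)+\tfrac12\log\d_D(z_k)\ge\tfrac12\log\d_D(w_k)=O(1)$. So $q\in\partial D\setminus\{p\}$, and feeding \eqref{dumbest} at the base point $o$ into the Gromov product gives
$2(z_k|w_k)_o\ge k_D(z_k,o)-k_D(z_k,w_k)\ge -\tfrac12\log\d_D(z_k)-k_D(z_k,w_k)+O(1)\to+\infty,$
contradicting the assumption on $(x|y)_o$ along $(p,q)$.

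For (a)$\Rightarrow$(b), I would fix $p\neq q\in\partial D$ and pick neighborhoods $W_p,W_q$ of $p,q$ with $\overline{W_p}\cap\overline{W_q}=\emptyset$. For $x\in W_p\cap D$, $y\in W_q\cap D$ close enough to $p,q$, the bounded convex domain $D$ is complete hyperbolic, so there is a geodesic $\gamma:[0,L]\to D$ joining $x$ and $y$. Since $y\notin W_p$, let $u$ be the first-exit point of $\gamma$ from $W_p$; then $u\in\partial W_p\cap D\subset W_p^c\cap W_q^c$. The $k$-point inequalities at $p$ and at $q$ now give $k_D(x,u)\ge\tfrac12\log(1/\d_D(x))-C_p$ and $k_D(u,y)\ge\tfrac12\log(1/\d_D(y))-C_q$, and summing these using $k_D(x,y)=k_D(x,u)+k_D(u,y)$ (valid because $u$ is on the geodesic) yields the log-estimate \eqref{logest}. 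Pairing this lower bound with the upper bound $k_D(\cdot,o)\le\tfrac12\log(1/\d_D(\cdot))+O(1)$, i.e.\ the $\tfrac12$-log-growth which is guaranteed by Dini-smoothness via the references recalled in Section~\ref{Sec:prel}, pins $(x|y)_o$ to $O(1)$ as $(x,y)\to(p,q)$, which is (b).

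The only mildly subtle point I anticipate is ensuring that the first-exit point $u$ lies in \emph{both} $W_p^c$ and $W_q^c$, so that the two one-sided $k$-point estimates can be chained through a \emph{single} intermediate point on the geodesic; this is precisely what forces the disjointness-of-closures condition $\overline{W_p}\cap\overline{W_q}=\emptyset$. This is also implicitly where completeness is used, namely to provide an honest geodesic (not merely an almost-geodesic) on which the additive identity $k_D(x,y)=k_D(x,u)+k_D(u,y)$ holds exactly at the intermediate point $u$.
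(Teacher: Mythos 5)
Your proposal is correct and follows essentially the same route as the paper: (i) by contradiction using \eqref{dumbest} together with the Gromov-product hypothesis; (b)$\Rightarrow$(a) via (i); (a)$\Rightarrow$(b) by chaining the two $k$-point inequalities through a point of $W_p^c\cap W_q^c$ to get the log-estimate \eqref{logest}, then using $\tfrac12$-log-growth from Dini-smoothness to bound $(x|y)_o$; and (b)$\Leftrightarrow$(c) via Proposition~\ref{suffvis}. The only cosmetic difference is that in (a)$\Rightarrow$(b) the paper bounds the Kobayashi length of an arbitrary curve (taking the infimum) rather than invoking a geodesic and the additivity identity at the exit point, so it does not need to appeal explicitly to completeness; both variants give the same lower bound on $k_D(x,y)$.
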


\begin{proof}
(i) Assume $p$ is not a $k$-point, then we can choose sequences $\{x_k\}\to p$ and $\{y_k\} \subset W^c$
such that $k_D(x_k,y_k) + \frac12 \log \delta_D(x_k) \to -\infty$.  Passing to a subsequence, we may
assume $y_k\to q\in \overline D\setminus W$. If $q\in D$, then the assumption contradicts \eqref{dumbest}.
Suppose $q\in \partial D\setminus W$. Since $D$ is convex, the assumption in (i)
implies \eqref{logest}, so for $k$ large enough,
$$
k_D(x_k,y_k) + \frac12 \log \d_D(x_k) \ge \frac12 \log \frac1{\d_D(y_k)} -C,
$$
which shows that $p$ is a $k$-point.

(ii) (b) implies (a) follows at once from  (i). Conversely,
if both $p, q$ are $k$-points, and $p\neq q$, then selecting disjoint neighborhoods $W_p$ and $W_q$,
the Kobayashi length of any curve connecting $p'\in W_p$  to $q'\in W_q$ is greater
than $k_D(p',W_p^c) + k_D(q',W_q^c)$ and so
we see that the pair $\{p,q\}$ satisfies  \eqref{logest},
therefore by Dini-smoothness, \eqref{stdest},  hence, (a) implies (b).

Finally, properties (b) and (c) are equivalent by Proposition \ref{suffvis}.
\end{proof}

The proof of (ii) also implies:

\begin{prop}
\label{prop:Dini-k-vis}
Let $D$ be a bounded complete hyperbolic domain  in $\C^n$. If $\partial D$ is Dini-smooth and any $p\in \partial D$ is a $k$-point, then $D$ enjoys the visibility property.
\end{prop}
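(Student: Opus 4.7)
The plan is to chain together exactly the forward implications used in the proof of Proposition~\ref{kpteq}(ii), observing that none of them actually require convexity: convexity was invoked there only in part~(i) to obtain the converse direction (via Proposition~\ref{suffstd} and its use of the log-estimate), and that implication is not needed here.

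First I would fix two distinct boundary points $p, q \in \partial D$ and choose open neighborhoods $W_p, W_q$ with $\overline{W_p}\cap\overline{W_q}=\emptyset$. For $x\in W_p$ and $y\in W_q$, any piecewise $\mathcal C^1$ curve $\gamma:[0,1]\to D$ from $x$ to $y$ must meet $\partial W_q$; letting $s$ be the first entry time of $\gamma$ into $W_q$, the point $\gamma(s)$ lies in $(D\setminus W_p)\cap(D\setminus W_q)$ because the closures of $W_p$ and $W_q$ are disjoint. Splitting the Kobayashi length of $\gamma$ at $s$ and taking the infimum over curves yields
\[
k_D(x,y)\;\ge\;k_D(x,W_p^c)+k_D(y,W_q^c).
\]
Applying the $k$-point hypothesis at $p$ and $q$ (Definition~\ref{kpt}) gives constants $M_p,M_q$ with $k_D(x,W_p^c)\ge-\tfrac12\log\delta_D(x)-M_p$ and $k_D(y,W_q^c)\ge-\tfrac12\log\delta_D(y)-M_q$ for $x$ near $p$ and $y$ near $q$; substituting produces the log-estimate \eqref{logest} for the pair $\{p,q\}$.

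Since $\partial D$ is Dini-smooth, the results cited in the excerpt (e.g.\ \cite[Corollary~8]{NA}) guarantee that $D$ has $\tfrac12$-log-growth, and as observed just after Definition~\ref{growth} this combines with \eqref{logest} to give $\limsup_{(x,y)\to(p,q)}(x|y)_o<\infty$, via
\[
2(x|y)_o \;=\; k_D(x,o)+k_D(o,y)-k_D(x,y)\;\le\;\tfrac12\log\tfrac1{\delta_D(x)}+\tfrac12\log\tfrac1{\delta_D(y)}-k_D(x,y)+O(1).
\]
As $D$ is bounded and complete hyperbolic, Proposition~\ref{suffvis} then promotes this bound on Gromov products at every pair of distinct boundary points to the visibility property, finishing the proof. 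No real obstacle is anticipated; the argument is essentially a repackaging of implications already established in Proposition~\ref{kpteq}, with convexity removed because the only ingredients used are the triangle-inequality splitting above, the Dini-smoothness giving $\tfrac12$-log-growth, and Proposition~\ref{suffvis}, all of which hold in the present generality.
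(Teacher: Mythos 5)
Your proof is correct and follows exactly the same route the paper takes: the paper derives this proposition as an immediate byproduct of the (a)$\Rightarrow$(b)$\Rightarrow$(c) direction in the proof of Proposition~\ref{kpteq}(ii), which is precisely the curve-splitting bound $k_D(x,y)\ge k_D(x,W_p^c)+k_D(y,W_q^c)$, the $k$-point hypothesis to obtain \eqref{logest}, Dini-smoothness for $\tfrac12$-log-growth to upgrade to \eqref{stdest}, and finally Proposition~\ref{suffvis}. Your observation that convexity is never invoked in that direction is exactly why the paper can state this as a corollary of that proof.
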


Proposition \ref{bdrygeod} provides examples of points in the boundary of a convex domain,
 with a complex face reduced to a real line segment,
which are not $k$-points.

Note that a $k$-point $p\in \partial D$  satisfies the following necessary condition: for every neighborhood $W$ of $p$,
\begin{equation}\label{Eq:weak-k-point}
\limsup_{z\to p} k_D(z,W^c)=\infty.
\end{equation}

With this remark at hand, we can prove

\begin{lemma}
\label{discbdry}
Let $D\subset \C^n$ be a bounded domain. If  $\partial D$ is $\mathcal C^1$-smooth, or if $D$ is convex,  and $p$ lies in the interior of an
affine analytic disc contained in $\partial D$, then \eqref{Eq:weak-k-point} is not satisfied and, in particular, $p$ is not a  $k$-point.
\end{lemma}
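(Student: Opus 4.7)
The goal is to find a neighborhood $W$ of $p$ and a constant $C>0$ such that $k_D(z, D\setminus W)\le C$ for all $z\in D$ close to $p$; this is exactly the negation of \eqref{Eq:weak-k-point}. My strategy is to construct, for each such $z$, a holomorphic disc $\phi_z:\D\to D$ with $\phi_z(0)=z$ and $\phi_z(1/2)$ at definite Euclidean distance from $p$. Then $k_D(z,\phi_z(1/2))\le k_\D(0,1/2)=:C_0$, and $\phi_z(1/2)\in D\setminus W$ for $W$ small enough. Choose coordinates so that $p=0$ and $\Delta=\{\zeta v:|\zeta|<r\}$ with $v$ a unit vector. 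In both settings $v$ lies in the complex tangent space to $\partial D$ at $p$: in the $\mathcal{C}^1$ case because $T_p\partial D$ is a real hyperplane containing both $v$ and $iv$; in the convex case because any supporting complex hyperplane at $p$ which contains $\Delta$ must contain the entire complex line $\C v$.

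In the convex case, I would show that the tangential inner radius
\[
\tau_D(z;v):=\sup\{R>0:z+\zeta v\in D\ \text{for all}\ |\zeta|<R\}
\]
is bounded below by some $r_0>0$ for $z$ near $p$ in $D$, after which the disc $\phi_z(\zeta):=z+(r_0/2)\zeta v$ works. The lower bound uses the infimum characterization $\tau_D(z;v)=\inf_q -\Re\langle z-q,\mathbf n_q\rangle/|\langle v,\mathbf n_q\rangle|$ over $q\in\partial D$ with $\langle v,\mathbf n_q\rangle\neq 0$. The key observation is that any supporting complex hyperplane $H_q$ passing through $p$ must satisfy $\langle v,\mathbf n_q\rangle=0$: indeed, $p\in H_q$ forces $\Re\langle p-q,\mathbf n_q\rangle=0$, and then $p+\zeta v\in\overline{D}$ for $|\zeta|<r$ forces $\Re(\zeta\langle v,\mathbf n_q\rangle)\le 0$ for all such $\zeta$, which is possible only if $\langle v,\mathbf n_q\rangle=0$. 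Hence as $z\to p$, sequences of hyperplanes whose limit passes through $p$ (which would give vanishing numerator) have vanishing denominator as well, and a compactness argument on the Gauss map yields the uniform lower bound.

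In the $\mathcal{C}^1$ case, write $D=\{\Re z_n<\rho(\tilde z)\}$ locally with $\rho\in\mathcal{C}^1$, $\rho(0)=0$, $d\rho(0)=0$, and $v=e_1$ so that $\rho(\zeta,0,\ldots,0)=0$ for $|\zeta|<r$. The naive translate $z+\zeta v$ may exit $D$ because of the second-order positive growth of $\rho$ off $\Delta$. Instead, I would use a disc
\[
\phi_z(\zeta):=\bigl(z_1+(r/2)\zeta,\,z_2,\,\ldots,\,z_{n-1},\,z_n+\alpha_z(\zeta)\bigr),
\]
where $\alpha_z:\D\to\C$ is a low-degree holomorphic polynomial (of degree $2$) whose real part compensates the growth of $\rho$ in the $v$-direction. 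Because $\rho$ vanishes on $\Delta$ and is $\mathcal{C}^1$, its $v$-partial derivatives vanish at each point of $\Delta$ and are $o(1)$ in a neighborhood of $\Delta$; hence $|\alpha_z(\zeta)|\to 0$ uniformly on $\D$ as $z\to p$, so $\phi_z(1/2)$ stays close to $z+(r/4)v$, at Euclidean distance $\approx r/4$ from $p$.

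The main obstacle is the $\mathcal{C}^1$ case: explicitly constructing $\alpha_z$ and verifying that $\phi_z(\D)\subset D$ uniformly for $z$ close to $p$. A secondary technical point, in the convex case, is making rigorous the "degenerating hyperplanes drop out" argument via compactness of the normal directions of supporting hyperplanes whose foot points approach $p$.
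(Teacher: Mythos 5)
Your reduction in the convex case to a uniform lower bound on the tangential inner radius $\tau_D(z;v)$ does not work, and the ``compactness on the Gauss map'' step you invoke is precisely where the argument breaks down. The problematic supporting hyperplanes as $z\to p$ are those whose foot points approach the face through $E$; for such hyperplanes \emph{both} the numerator $-\Re\langle z-q,\mathbf n_q\rangle$ and the denominator $|\langle v,\mathbf n_q\rangle|$ tend to $0$, and the ratio can be made to tend to $0$. Concretely, consider
\[
D := \Bigl\{(z_1,z_2)\in\C^2 : |z_1|<1,\ \Re z_2 + \frac{(\Im z_2)^2}{1-|z_1|}<0\Bigr\}
\]
(intersected with a large ball to make it bounded). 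One checks that $t\mapsto t^2/(1-s)$ is jointly convex and nondecreasing in $s\in[0,1)$, hence composing with the convex function $|z_1|$ shows $D$ is convex; it contains $E=\D\times\{0\}$ in its boundary and $p=0$ is interior to $E$, with $v=e_1$. For $z=(0,-(1+\d)\eps + i\sqrt\eps)\in D$ (small $\d,\eps>0$) one computes
\[
\tau_D(z;e_1)=\frac{\d}{1+\d},
\]
so letting $\d=\eps\to 0$ gives $z\to p$ with $\tau_D(z;e_1)\to 0$. So no uniform lower bound $r_0$ exists and the affine disc $\phi_z(\zeta)=z+(r_0/2)\zeta v$ does not in general land in $D$. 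The $\mathcal C^1$ case you leave incomplete, as you acknowledge.

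The paper's proof runs along a genuinely different line. It isolates a crude ``push-inward'' property (called (PB) there): there are a neighborhood $U$ of $p$, a unit vector $v$, and $\eps>0$ such that $z+tv\in D$ for every $z\in U\cap\overline D$ and $t\in(0,\eps)$. This is verified directly and separately in the $\mathcal C^1$ case and in the convex case (in the latter by pushing toward an interior point $p_0$). One then translates the boundary disc inward to get analytic discs $E+tv\subset D$, and uses those discs to bound $k_D(\cdot,W^c)$ along the translated discs; this is enough to conclude that $p$ is not a $k$-point. In particular the paper never needs a disc through an arbitrary point $z$ near $p$, which is the step your proposal tries to supply and which, as the example shows, cannot be obtained by an affine disc in the direction of $E$.
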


This is proved in the case where  $\partial D$ is  $\mathcal C^1$ by \cite[Proposition 4.6]{Z2}.
The converse is true for any $\C$-convex domain, by \cite[Proposition 3.5]{Z2},
which proves a stronger fact: if \eqref{Eq:weak-k-point} does not hold then $p$ sits in the
interior of a convex face.

\begin{proof}
It is enough to ``push'' the analytic disc on $\partial D$ inside the domain $D$ by an arbitrarily small amount.
Under each of the hypotheses, we will prove a slightly stronger property of the boundary:

(PB) for any $p\in \partial D$, there exist a unit vector $v\in\C^n$, a neighborhood $U$
of $p$, and a positive number $\eps$ such that :  for any $z\in U\cap \overline D$,
any $t\in (0,\eps)$, then $z+tv\in D$.

If (PB) holds, let $E$ be a complex affine disc centered at $p$, of radius $r_0$ small enough
so that $E\subset \partial D\cap U$. For $t<\eps$, we have a family of complex affine discs parallel to $E$,
$E+tv$. Take $W:= B(p, r_0/2)$, the ball of radius $r_0/2$ around $p$.
Using the analytic discs $E+tv$ we see that $k_D (p+tv, \partial W ) $ remains bounded
as $\eps\to 0$.

 Any domain with $\mathcal C^1$ boundary satisfies (PB) (this is almost
stated in  \cite[Remark 3.2.3. (b)]{JP}, and elementary).

In the case where $D$ is convex with no additional boundary smoothness, if $p\in \partial D$,
consider any $p_0 \in D$. There is an $r>0$ such that the closed ball $\bar B(p_0,r)\subset D$. Consider
$F:=\left( \bar B(p_0,r/2) + \R_+ (p-p_0)  \right) \cap \partial D$. This is a closed neighborhood of $p$
in $\partial D$. We can take for $U$ a small enough open neighborhood of $F$, $v:= \frac{p_0-p}{\|p-p_0\|}$,
 and $\eps< \mbox{dist}(F, \bar B(p_0,r/2))$.
\end{proof}

Taking into account that by \cite[Proof of Proposition 2.4]{Sib} (see  \cite[Proposition 10]{NPZ}
for more details), a point $p\in \partial D$  lies in no non-trivial affine analytic disc
 contained in $\partial D$ if and only if
$p$ is a holomorphic peak point for $A(D)$, we have:

\begin{cor}
\label{peak}
If $D$ is convex and bounded, $p\in \partial D$, then
 $p$ satisfies \eqref{Eq:weak-k-point}  if and only if $p$ is a peak point for $A(D)$.
 \end{cor}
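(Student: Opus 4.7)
The plan is to chain together three ingredients already in the text: Lemma \ref{discbdry}, Zimmer's \cite[Proposition 3.5]{Z2} (recalled immediately after that lemma), and the Sibony--NPZ peak-point characterization recalled just before the statement. The corollary is essentially the concatenation of these three facts, once we bridge between ``interior of a convex face'' and ``some non-trivial affine analytic disc''.

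First I would establish the intermediate equivalence: for our bounded convex $D$,
\[
\text{\eqref{Eq:weak-k-point} fails at } p \;\Longleftrightarrow\; p \text{ lies in some non-trivial affine analytic disc contained in } \partial D.
\]
The forward direction is given for free by Zimmer's Proposition 3.5, since $D$ is in particular $\C$-convex: failure of \eqref{Eq:weak-k-point} at $p$ places $p$ in the interior of a convex face $L\cap \overline D$, and this face contains an open neighborhood of $p$ inside the complex line $L$, hence provides a genuine affine analytic disc through $p$ lying in $\partial D$.

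For the converse, I would take a non-constant affine map $\phi\colon \D\to\partial D$ with $p\in\phi(\D)$ and let $L$ be the complex affine line on which $\phi(\D)$ lies. The key claim is that convexity of $D$ forces $L\cap D=\emptyset$: otherwise $L\cap D$ would be a non-empty convex open subset of the line $L\cong \C$, whose topological boundary in $L$ is a curve of real dimension at most $1$, and hence could not contain the two-real-dimensional set $\phi(\D)\subset L\cap\partial D$. Therefore $L\cap\overline D\subset\partial D$ is a convex face containing a Euclidean neighborhood of $p$ in $L$, so $p$ is interior to an affine analytic disc in $\partial D$, and Lemma \ref{discbdry} delivers the failure of \eqref{Eq:weak-k-point}.

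Combining this intermediate equivalence with the Sibony--NPZ fact recalled before the corollary --- namely that $p$ is a peak point of $A(D)$ iff $p$ lies in no non-trivial affine analytic disc contained in $\partial D$ --- finishes the proof. The only step that asks for an explicit short argument is the convexity dimension count showing that a 2-real-dimensional piece of $L\cap \partial D$ promotes itself to a full convex face, but this is immediate, so I do not expect any serious obstacle.
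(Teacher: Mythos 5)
Your proof is correct and follows the same route the paper intends for this corollary: concatenate Lemma~\ref{discbdry}, Zimmer's \cite[Proposition 3.5]{Z2} (applicable since a bounded convex domain is $\C$-convex), and the Sibony--NPZ characterization of peak points recalled just before the statement. Your intermediate ``dimension count'' showing $L\cap D=\emptyset$ is correct but not actually needed to invoke Lemma~\ref{discbdry}: once you have a non-constant affine $\phi\colon\D\to\partial D$ with $p\in\phi(\D)$, the set $\phi(\D)$ is already an open disc in the complex line $L$, so $p$ automatically lies in the interior of an affine analytic disc contained in $\partial D$, which is all that Lemma~\ref{discbdry} requires.
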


On the other hand, from Corollary \ref{trivface} and the proof of Proposition \ref{kpteq}(i)  we have

\begin{prop}
If $D$ is a bounded convex domain, $p\in \partial D$ and $F_p=\{p\}$, then $p$ is a $k$-point.
\end{prop}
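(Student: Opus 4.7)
The plan is to argue by contradiction following the template of the proof of Proposition~\ref{kpteq}(i), exploiting the fact that Corollary~\ref{trivface} gives us the log-estimate \eqref{logest} for the pair $\{p,q\}$ for \emph{every} $q\in\partial D\setminus\{p\}$, precisely because $F_p=\{p\}$.

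Suppose $p$ is not a $k$-point. Then there is an open neighborhood $W$ of $p$, a sequence $\{x_k\}\subset D$ with $x_k\to p$, and points $y_k\in D\setminus W$ such that
$$k_D(x_k,y_k)+\tfrac12\log\delta_D(x_k)\to-\infty.$$
By compactness of $\overline D$, extract a subsequence so that $y_k\to q\in\overline D\setminus W$. The proof then splits along the standard dichotomy $q\in D$ vs.\ $q\in\partial D$.

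If $q\in D$, then $\delta_D(y_k)$ is bounded away from $0$, and the elementary estimate \eqref{dumbest} gives
$$k_D(x_k,y_k)+\tfrac12\log\delta_D(x_k)\ge-\tfrac12\log\tfrac{1}{\delta_D(y_k)}=O(1),$$
which contradicts the divergence to $-\infty$. If instead $q\in\partial D\setminus W$, so in particular $q\neq p$, then the hypothesis $F_p=\{p\}$ puts us in the setting of Corollary~\ref{trivface}, which yields \eqref{logest} for the pair $\{p,q\}$. Thus for $k$ large,
$$k_D(x_k,y_k)\ge\tfrac12\log\tfrac{1}{\delta_D(x_k)}+\tfrac12\log\tfrac{1}{\delta_D(y_k)}-C,$$
whence
$$k_D(x_k,y_k)+\tfrac12\log\delta_D(x_k)\ge\tfrac12\log\tfrac{1}{\delta_D(y_k)}-C\ge-C,$$
again a contradiction. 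Hence $p$ is a $k$-point.

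There is really no obstacle here once one has Corollary~\ref{trivface} in hand; the only thing to check carefully is that the sequence $\{y_k\}$, which a priori may cluster anywhere in $\overline D\setminus W$, admits a common treatment whether its limit is interior or on the boundary. Both cases are handled by a single lower bound on $k_D(x_k,y_k)$: a trivial one in the interior case from \eqref{dumbest}, and the sharper log-estimate on the boundary case, which is exactly what the triviality of $F_p$ buys us via Corollary~\ref{trivface}. This is why the statement is essentially a corollary of the earlier work and needs no new ingredient.
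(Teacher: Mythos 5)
Your proof is correct and matches the paper's own approach exactly: the paper states the result with only the note that it "follows from Corollary \ref{trivface} and the proof of Proposition \ref{kpteq}(i)," and what you have written out is precisely that argument, with the boundary case handled by the log-estimate \eqref{logest} supplied by Corollary \ref{trivface} (thanks to $F_p=\{p\}$) and the interior case handled by \eqref{dumbest}.
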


Our next aim is to localize the previous results. We start with:

\begin{thm}
\label{kp}
Let $D$ be a bounded domain in $\C^n$ and $p\in\partial D$.
Let $U$ be a neighborhood of $p$ such that $p$ is a $k$-point for $D\cap U$
and assume that $D\cap U$ has $\a$-growth for some $\a<1$. Then $p$ is a $k$-point for $D$.
\end{thm}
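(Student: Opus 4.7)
The plan is to argue by contradiction. Suppose $p$ is not a $k$-point for $D$; then there exist a neighborhood $W$ of $p$ (which I may shrink so that $W\subset\subset U$), a sequence $z_k\to p$ in $D$, and points $w_k\in D\setminus W$ such that $k_D(z_k,w_k)+\tfrac12\log\delta_D(z_k)\to-\infty$. For each $k$, I fix a curve $\gamma_k:[0,L_k]\to D$ joining $z_k$ to $w_k$ with $\ell_D(\gamma_k)\le k_D(z_k,w_k)+1/k$. Letting $s_k:=\inf\{t:\gamma_k(t)\notin W\}$, the fact that $\overline W\subset U$ forces $\gamma_k([0,s_k])\subset\overline W\subset U$, so $\gamma_k|_{[0,s_k]}$ is a curve \emph{in} $D\cap U$ ending at a point $y_k:=\gamma_k(s_k)\in\partial W$.

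The next step is to invoke the $k$-point property of $D\cap U$. For $z_k$ close enough to $p$, $\delta_{D\cap U}(z_k)=\delta_D(z_k)$, and choosing a smaller neighborhood $W_0\subset\subset W$ of $p$ (so that $y_k\in (D\cap U)\setminus W_0$), the $k$-point hypothesis at $p$ for $D\cap U$ gives
\[
\ell_{D\cap U}(\gamma_k|_{[0,s_k]})\ge k_{D\cap U}(z_k,y_k)\ge k_{D\cap U}(z_k,(D\cap U)\setminus W_0)\ge -\tfrac12\log\delta_D(z_k)-C_0
\]
for some constant $C_0$ independent of $k$.

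The crux is then to transfer this from $\ell_{D\cap U}$ to $\ell_D$: since $\k_D\le \k_{D\cap U}$, one automatically gets $\ell_D\le\ell_{D\cap U}$, which is the \emph{wrong} direction. To reverse it, I would exploit that $\gamma_k|_{[0,s_k]}\subset\overline W$ sits at positive Euclidean distance $d$ from $\partial U$. A Royden-type restriction argument applied to the analytic discs through points of this subcurve (using that $|\phi(\zeta)-\phi(0)|\le 2\,\textup{diam}(D)\,|\zeta|$ for any $\phi:\D\to D$) gives a localization $\k_{D\cap U}\le c\,\k_D$ on $\overline W$ for some $c>1$ depending only on $d$ and $\textup{diam}(D)$. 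The $\alpha$-growth of $D\cap U$ with $\alpha<1$ is then used to \emph{refine} this comparison: it controls the size of the optimal analytic discs realizing $\k_{D\cap U}$ at points near $p$ finely enough that the defect $\ell_{D\cap U}(\gamma_k|_{[0,s_k]})-\ell_D(\gamma_k|_{[0,s_k]})$ can be absorbed into a bounded error rather than into a multiplicative factor on the main term. Combining with the previous display produces $\ell_D(\gamma_k)\ge -\tfrac12\log\delta_D(z_k)-C$, contradicting the choice of $\gamma_k$.

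The main obstacle is precisely this sharp Kobayashi--Royden comparison in the second step: a purely Schwarz-lemma argument only yields a multiplicative constant strictly less than $1$, which would degrade the coefficient $\tfrac12$ in the $k$-point inequality and is therefore not enough to close the argument. The role of the hypothesis $\alpha<1$ is exactly to upgrade the comparison so that the coefficient $\tfrac12$ is preserved; without it, one merely obtains a weaker form of boundary growth for $k_D$.
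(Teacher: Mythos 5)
Your structural plan — restrict a near-minimizing curve to the part that stays in a small neighborhood $W\subset\subset U$, then compare Kobayashi length in $D$ against Kobayashi length in $D\cap U$ — is the right skeleton, and you have correctly identified the crux: a flat Royden-type bound $\k_{D\cap U}\le c\,\k_D$ with $c>1$ degrades the constant $\tfrac12$ and cannot close the argument. However, after diagnosing the obstacle, you do not actually overcome it; the central paragraph of your proposal asserts that the $\alpha$-growth ``controls the size of the optimal analytic discs'' so that the length defect ``can be absorbed into a bounded error,'' but offers no mechanism. This is where the genuine content of the theorem lives, and as written there is a gap.

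The paper resolves it with two distinct ingredients, both of which are absent from your sketch, and whose roles you partly conflate. First, the improved \emph{pointwise} metric comparison $\k_D(z;X)\ge\bigl(1-c'\delta_D(z)^{c}\bigr)\k_{D\cap U}(z;X)$ for any prescribed $c<1$ near $p$ does \emph{not} come from $\alpha$-growth at all: it comes from a bootstrap of Royden's localization inequality using the $k$-point hypothesis on $D\cap U$ itself — the $k$-point estimate forces $\tanh k_D(z,V^c)\ge 1-c'\delta_D(z)^{c_0}$, which upgrades the Royden constant from a fixed $c_0<1$ to $1-o(1)$, and then one iterates. Second, the $\alpha$-growth is used for something else entirely: after reparametrizing the restricted curve by Kobayashi–Royden arclength (so $\k_D(\sigma;\sigma')=1$ a.e.), the $\alpha$-growth forces $\delta_D(\sigma(t))^{\alpha}\lesssim 1/|t-t_0|$, i.e.\ the curve cannot linger near $\partial D$; the total multiplicative defect then integrates to $\int_0^{L}2c'\delta_D(\sigma(t))^{c}\,dt\lesssim\int f(t)^{c/\alpha}dt$, which converges precisely because $\alpha<1$ allows one to pick $c\in(\alpha,1)$ so that $c/\alpha>1$. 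So $\alpha<1$ does not ``upgrade the comparison'' as you suggest; rather, it makes the accumulated error along the curve summable. Without both the bootstrap lemma and the arclength/decay estimate your argument does not go through.
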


Since bounded convex domains have $\a$-log-growth for some
$\a>0$ (see the remark after Definition \ref{growth}), we have the following corollary:

\begin{cor}
\label{ckp}
Let $D$ be a bounded domain in $\C^n.$ Let $U$ be a neighborhood of $p$ such that $D\cap U$ is convex and $p$ is a
$k$-point for $D\cap U$. Then $p$ is a $k$-point for $D$.
\end{cor}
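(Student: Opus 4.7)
The plan is to deduce Corollary \ref{ckp} as a direct application of Theorem \ref{kp} to the subdomain $D\cap U$. The hypotheses to verify are that $D\cap U$ is bounded (clear, since $D\cap U\subset D$ and $D$ is bounded), that $p$ is a $k$-point for $D\cap U$ (given), and that $D\cap U$ has $\alpha$-growth for some $\alpha<1$. The whole argument thus reduces to the last of these.

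For this I appeal to the remark following Definition \ref{growth}, according to which every bounded convex domain has $\alpha_0$-log-growth for some $\alpha_0>0$; concretely, there exist $x_0\in D\cap U$ and $C>0$ such that
\[
k_{D\cap U}(x_0,z)\le \alpha_0\log\frac{1}{\delta_{D\cap U}(z)}+C \qquad \text{for all } z\in D\cap U.
\]

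The final step is the elementary observation that $\alpha_0$-log-growth implies $\alpha$-growth for every $\alpha\in(0,1)$: since $\log(1/t)=o(t^{-\alpha})$ as $t\to 0^+$, fixing for instance $\alpha=1/2$ yields a constant $C'>0$ such that $\alpha_0\log(1/t)\le t^{-1/2}+C'$ on the bounded interval $(0,\operatorname{diam}(D)]$ (the function $u\mapsto \alpha_0\log u-u^{1/2}$ is continuous on $[1/\operatorname{diam}(D),\infty)$ and tends to $-\infty$ at infinity, hence is bounded above). Substituting $t=\delta_{D\cap U}(z)$ into the log-growth estimate gives $k_{D\cap U}(x_0,z)-\delta_{D\cap U}(z)^{-1/2}\le C+C'$, which is precisely $(1/2)$-growth with $\beta=1$. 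Since $1/2<1$, Theorem \ref{kp} applies and $p$ is a $k$-point for $D$. No step presents a genuine obstacle; the only subtlety worth flagging is that the log-growth condition guaranteed by convexity is strictly stronger than the $\alpha$-growth hypothesis of Theorem \ref{kp} for any $\alpha>0$, so the corollary is essentially a matter of matching the two distinct growth definitions introduced in Definition \ref{growth}.
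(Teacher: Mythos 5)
Your proof is correct and follows the same route as the paper: apply Theorem \ref{kp} to $D\cap U$ after noting that bounded convex domains have $\alpha_0$-log-growth (the remark after Definition \ref{growth}). The only thing you add is the explicit check that $\alpha_0$-log-growth implies $\alpha$-growth for any $\alpha\in(0,1)$, which the paper leaves implicit when passing from the remark to the corollary; this is a reasonable detail to spell out, since Theorem \ref{kp} is stated in terms of $\alpha$-growth rather than log-growth, but it does not constitute a different argument.
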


\begin{proof}[Proof of  Theorem \ref{kp}]
In the sequel, when $U$  and $D$ are  open sets, we define $D_U:=D\cap U$.

Before starting the actual proof of Theorem~\ref{kp}, we need a lemma to compare  the  Kobayashi-Royden metrics
of $D$ and $D\cap U$ when $z$ is near $p$.

\begin{lemma}
\label{deltapower}
Given any $c<1$, there exist $c'>0$ and a neighborhood $W$ of $p$, $W\subset \subset U$, such
that for any $z\in W$,
\begin{equation}
\label{loci}
\k_D(z;X)\ge(1-c'\d_D(z)^c)\k_{D_U}(z;X),\ X\in\C^n.
\end{equation}
\end{lemma}

\begin{proof}
Let $V\subset \subset U.$ Since $D$ is bounded, $\min_{z\in V} l_D(z,U^c) =: c_0 >0$.
By the  localization formula (see \cite[Lemma 2]{Roy}):
\begin{equation}
\label{roy}
\k_D(z;X)\ge l_D(z,U^c)\cdot \k_{D_U}(z;X),
\end{equation}
where
$l_D(z,U^c)= \inf_{w\in D\setminus U} l_D(z,w)$, so for $z\in D_V,$ we have
\begin{equation}
\label{lock}
\k_D(z;X)\ge c_0 \k_{D_U}(z;X),\ \ X\in\C^n.
\end{equation}
Since $k_D$ is the integrated form of $\k_D,$
$$
k_D(z,V^c)\ge c_0 k_{D_U}(z,V^c),\ z\in D_V.
$$
We may choose $W\subset \subset V$
such that for $z\in W$, $\delta_{D_U}(z)=\delta_D(z)$. Since $p\in\partial D_U$ is a $k$-point,
for $ z\in D_W, $ we have
%and $c'>0$ such that
$$
l_D(z,U^c)\ge l_D(z,V^c)\ge\tanh k_D(z,V^c)\ge \tanh c_0 k_{D_U}(z,V^c)\ge 1-c'\delta_D(z)^{c_0},
$$
where $c'$ depends on the constant implicit in Definition \ref{kpt}.

Shrinking $W$ further, we may reduce $\sup_{z\in  W}\delta_D(z)$ so that \eqref{lock} holds
for $z\in W$ with any constant $c<1$ we choose in the place of $c_0$. Repeating the previous argument once
more, we get \eqref{loci}, and the Lemma is proved.
\end{proof}

Now, by \eqref{lock}, there exists $W$ a neighborhood of $p$, and a constant $\hat c>0$ such that
$$\k_D(z;X)\ge\hat c||X||,\ z\in D_W,\ X\in\C^n.$$

Let now $z\in D_W$ and $w_1\not\in D_W.$ %By \cite[Proposition 4.4]{BZ},
Let $\gamma:[0,L_1]\rightarrow D$ be a $\mathcal C^1$ curve from $z$ to $w_1$ such that
\[
\int_0^{L_1} \k_D(\gamma_1(s);\gamma_1'(s)) ds \le k_D (z,w_1)+\eps.
\]
Let $L_2:=\inf\{s \in [0,L_1]: \gamma_1(s)\notin W\}$ and $w:=\gamma_1(L_2)\in D\setminus W$. It will
be enough to bound from below $k_D(z,w)$.

We can then apply the following lemma, which is essentially proved in \cite[Proposition 4.4]{BZ} (``existence
of  $(1,\eps)$-almost geodesic'').

\begin{lemma}
\label{abs}
Let $D$ be a domain in $\C^n$ and
$\g:[0,1]\to D$ be a $\mathcal C^1$-curve with non-singular points.
Assume that there exists a constant $\hat c>0$ such that
$\k_D(\g(t);\g'(t))\ge \hat c||\g'(t))||$ for any $t\in[0,1].$ Let $\s:[0,L]\to D$, $s\mapsto \s(s)$,
be the parametrization of $\g$ by Kobayashi-Royden length, $ds = \k_D(\g(t);\g'(t)) dt$. Then $\s$ is an absolutely
continuous curve and $\k_D(\s(s);\s'(s))=1$ for almost every $s\in[0,L].$
\end{lemma}

By Lemma \ref{abs} applied to $\g:=\g_1|_{[0,L_2]}$,
for any $\var>0$ there exists %an $(1,\var)$-almost geodesic for $k_D$,
a curve
$\s:[0,L]\to D$, with $\s(0)=z$ and $\s(L)=w$, such that  $\k_D(\s(s);\s'(s))=1$ for almost every $s\in[0,L]$
and $L\le k_D(z,w) +\eps$.

%Set $L'=\inf\{t\in(0,L]:\s(t)\not\in D_W\}$ and $w'=\s(L').$

Choose $t_0$ to maximize $\d_D(\s(t)),$ $t\in[0,L].$
Since $D_U$ has $\a$-growth, it follows % as in the proof of \cite[Lemma 3.3]{LW}
that
\begin{multline}
\label{deltaest}
|t-t_0|-\var < k_D(\s(t),\s(t_0))
\le k_{D_U}(\s(t),z_0)+k_{D_U}(\s(t_0),z_0)\\
\le \b \d_D(\s(t))^{-\a} + \b \d_D(\s(t_0))^{-\a} \le 2 \b \d_D(\s(t))^{-\a}.
\end{multline}

Define $f(t)=2\beta$ if $|t-t_0|<1+\var$ and $f(t)=2\beta/(|t-t_0|-\var)$ otherwise.
Shrinking $W$ as needed, we may assume that
$\d_D(\s(t))^\a\le f(t)$.

Then using \eqref{loci}
\begin{multline*}
k_{D_U}(z,w)\le \int_0^{L'} \k_{D_U} (\s(t),\s'(t)) dt
\\
<\int_0^{L}(1+2c'\d_D(\s(t))^c)\k_D(\s(t);\s'(t))dt
\\
=\int_0^{L}(1+2c'\d_D(\s(t))^c)dt \le
L+2c'\int_0^\infty f(t)^{\nu}dt=k_D(z,W^c)+\var+ C_\eps,
\end{multline*}
where $\nu=c/\a$
and $C_\eps= 4c' (2\beta)^\nu \left(1+\eps+\frac1{\nu-1}\right)$. %depends on $c, c',  \a$ and $\beta$, but does not depend on $k_D(z,W^c)$.

Since $\var>0$ was arbitrary, it follows that
$$k_{D_U}(z,W^c)\le k_D(z,W^c)+C_0,\ z\in D_W$$
which completes the proof of Theorem \ref{kp}.
\end{proof}

\begin{rem}
In Theorem~\ref{kp} one can remove the hypothesis that $D$ is bounded by assuming that $p$ is an antipeak point of $D$. Indeed, under such hypothesis,  \eqref{lock} follows from \cite[Proposition 7]{FN}.
\end{rem}

\subsection{Locally $\C$-strictly convex points}

\begin{defn}
\label{cvxble}
We say that $p\in\partial D$ is a \emph{locally $\C$-strictly convex point}
if there exists a biholomorphism $\Psi$ from a bounded open neighborhood $U$ of $p$ to $\Psi(U)$
such that  $\Psi(D\cap U)$
 is convex and
$F_{\Psi(p)}= \{\Psi(p)\}$, where the multiface is taken with respect to $\Psi(D\cap U)$.
\end{defn}

Note that any $\mathcal C^2$-smooth strictly pseudoconvex point is locally $\C$-strictly convex.

\begin{thm}
\label{se} Let $D$ be a domain in $\C^n$. Any  locally $\C$-strictly convex point  $p\in \partial D$ is a $k$-point
for $D$.
As a consequence, any pair of distinct locally $\C$-strictly convex points satisfies
the log-estimate \eqref{logest}.
\end{thm}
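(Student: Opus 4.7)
The plan is to pull the problem back via the biholomorphism $\Psi$ of Definition~\ref{cvxble} to the convex model $\tilde D := \Psi(D\cap U)$, establish the $k$-point property at $\Psi(p)$ there, and globalize via Theorem~\ref{kp}. A biholomorphism between open subsets of $\C^n$ is smooth and therefore locally bi-Lipschitz; since $k_D$ is biholomorphically invariant and $\d_{D\cap U}(z) \asymp \d_{\tilde D}(\Psi(z))$ for $z$ near $p$, the $k$-point property at $p\in \partial(D\cap U)$ is equivalent to the $k$-point property at $\Psi(p)\in \partial\tilde D$.

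For the $k$-point property at $\Psi(p)$ in the bounded convex $\tilde D$, I adapt the half-plane projection used in the proof of Proposition~\ref{suffstd}. Argue by contradiction: fix a neighborhood $W$ of $\Psi(p)$ and assume some sequence $z_k\to\Psi(p)$ in $\tilde D$ satisfies $k_{\tilde D}(z_k,\tilde D\setminus W) + \tfrac12\log \d_{\tilde D}(z_k) \to -\infty$. Let $p_k\in\partial\tilde D$ be the (eventually unique) nearest-boundary point to $z_k$, and let $H_{p_k}$ be the unique complex supporting hyperplane of $\tilde D$ at $p_k$. After extraction, $H_{p_k}$ converges in Hausdorff distance on a fixed ball to a complex supporting hyperplane $H_\infty$ at $\Psi(p)$; since $F_{\Psi(p)}=\{\Psi(p)\}$, we have $H_\infty\cap \overline{\tilde D} = \{\Psi(p)\}\subset W$. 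Fix $\eta>0$ so that $\mathcal N_{H_\infty}(\eta)\cap \overline{\tilde D}\subset W$; by Hausdorff convergence $\mathcal N_{H_{p_k}}(\eta/2)\cap \overline{\tilde D}\subset W$ for $k$ large, and the half-plane estimate then yields
$$k_{\tilde D}(z_k,\tilde D\setminus W) \ge k_{\tilde D}\bigl(z_k,\tilde D\setminus \mathcal N_{H_{p_k}}(\eta/2)\bigr) \ge \tfrac12\log\tfrac{1}{\d_{\tilde D}(z_k)} - \tfrac12\log\tfrac{2}{\eta},$$
contradicting the assumption.

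To pass from $D\cap U$ to $D$, I invoke Theorem~\ref{kp}: besides $p$ being a $k$-point for $D\cap U$, it requires $D\cap U$ to have $\a$-growth for some $\a<1$. The bounded convex $\tilde D$ has $\a$-log-growth by the remark after Definition~\ref{growth}, which transfers to $D\cap U$ through the bi-Lipschitz $\Psi$; since $\a$-log-growth implies $\a'$-growth for every $\a'>0$, Theorem~\ref{kp} applies and yields that $p$ is a $k$-point for $D$. The log-estimate \eqref{logest} for a pair $\{p,q\}$ of distinct locally $\C$-strictly convex points is then obtained exactly as in the proof of Proposition~\ref{kpteq}(ii): for disjoint neighborhoods $W_p,W_q$, any curve joining $x\in W_p$ to $y\in W_q$ has Kobayashi length at least $k_D(x,W_p^c)+k_D(y,W_q^c)$, and each term is bounded below by $\tfrac12\log\bigl(1/\d_D(\cdot)\bigr)-C$ by the $k$-point property.

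The main obstacle is the lack of smoothness of $\partial\tilde D$ at $\Psi(p)$: the complex supporting hyperplane is not unique, and along $p_k\to\Psi(p)$ the $H_{p_k}$ need not converge. This is bypassed by arguing along subsequences and invoking $F_{\Psi(p)}=\{\Psi(p)\}$, which forces every subsequential Hausdorff limit to intersect $\overline{\tilde D}$ only at $\Psi(p)$---exactly what the half-plane projection needs in order to give a lower bound of the right order $\tfrac12\log(1/\d_{\tilde D}(z_k))$.
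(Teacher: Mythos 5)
Your architecture matches the paper's: reduce via $\Psi$ to the convex model $\tilde D:=\Psi(D\cap U)$, establish the $k$-point property at $\Psi(p)$, transfer to $D\cap U$ by biholomorphic invariance and the local bi-Lipschitz comparison of boundary distances, and globalize through Theorem~\ref{kp}; the log-estimate for pairs is then exactly the argument in the proof of Proposition~\ref{kpteq}(ii). Where the paper cites Proposition~\ref{kpteq} (which via Corollary~\ref{trivface} reduces to the half-plane projection of Proposition~\ref{suffstd}) to obtain the $k$-point property at $\Psi(p)$, you unroll this chain and run the half-plane argument directly --- which is a clean and self-contained way to do it, and in fact avoids worrying about whether the statement of Proposition~\ref{kpteq}(i) (which formally assumes a Gromov-product bound) applies, since your direct argument only uses what its proof actually uses, namely the estimate \eqref{logest}.

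One small imprecision to flag: you invoke Theorem~\ref{kp} as a black box, but its hypothesis asks for $\a$-growth of $D\cap U$ \emph{globally} (a supremum over all of $D\cap U$), whereas the transfer from the $\a$-log-growth of $\tilde D$ via $\Psi$ only holds near $p$, because $\Psi:U\to\Psi(U)$ need not be bi-Lipschitz up to $\partial U$. The paper is careful here: it says $D\cap U$ has $\a$-growth \emph{near $p$} and then appeals to the relevant part of the proof of Theorem~\ref{kp} (the estimates after Lemma~\ref{abs}), where the curve $\sigma$ is cut to remain in a small $W\subset\subset U$ and only the localized growth is used. Your argument is saved for the same reason, but as written it is slightly stronger than what the transfer delivers; a sentence noting that only the localized growth is needed (and is available) would close this.
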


Since strictly pseudoconvex points are locally $\C$-strictly convex, this provides a (modest) generalization
of \cite[Corollary 2.4]{FR}.

\begin{proof}[Proof of Theorem~\ref{se}]Here we show how to adapt the proof of Theorem \ref{kp} under this generalized hypothesis.

By Proposition \ref{kpteq}, $\Psi(p)$ is a $k$-point for $\Psi(D\cap U)$. As $\Psi$ is a biholomorphism
it only changes the distances to the boundary up to a fixed multiplicative constant near $p$, and the
Kobayashi distances are invariant, so $p$ is a $k$-point for $D\cap U$, and thus a peak point as well.

Furthermore, $\Psi(D\cap U)$ has the $\a$-growth property by convexity, hence, $D\cap U$ has the $\a$-growth property near $p$. This is
exactly what is needed to complete the last part of the proof after Lemma \ref{abs}.

The second statement follows from the proof of Proposition \ref{kpteq} (ii).
\end{proof}

\begin{proof}[Proof of Theorem~\ref{Thm:se-intro}] The result follows from Theorem~\ref{se} and Proposition~\ref{prop:Dini-k-vis}.
\end{proof}

\subsection{Localization of intrinsic distances}
From Theorem \ref{kp} and \eqref{roy}, we have
\begin{cor}
\label{loc}
Under the hypotheses of Theorem \ref{kp}, there exists
a neighborhood $V \subset \subset U$ of $p$ and a constant $c>0$ such that
$$\k_D(z;X)\ge (1-c\d_D(z))\k_{D_U}(z;X),\quad z\in D_V,\ X\in\C^n.$$
\end{cor}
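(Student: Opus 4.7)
My plan is to retrace the argument of Lemma~\ref{deltapower}, but to replace the $k$-point property of $D_U$ at $p$ (which forced the exponent $c_0<1$ on $\d_D$) with the stronger $k$-point property of $D$ itself at $p$, provided by Theorem~\ref{kp}. This substitution removes the factor $c_0$ and yields the full first-order power $\d_D(z)$.

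First, I would invoke Theorem~\ref{kp} to conclude that $p$ is a $k$-point for $D$. Fixing any neighborhood $V\subset\subset U$ of $p$ and applying Definition~\ref{kpt} with $W=V$, I obtain a constant $M>0$ such that
$$k_D(z,V^c)\ge -\tfrac{1}{2}\log\d_D(z)-M$$
for all $z\in D$ sufficiently close to $p$.

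Second, I would combine three elementary facts already used in the proof of Lemma~\ref{deltapower}: the monotonicity $l_D(z,U^c)\ge l_D(z,V^c)$ (since $D\setminus U\subset D\setminus V$), the comparison $l_D(z,w)\ge \tanh k_D(z,w)$, and the scalar bound $\tanh t\ge 1-2e^{-2t}$ for $t\ge 0$. Together with the previous step, these give
$$l_D(z,U^c)\ge 1-2e^{2M}\d_D(z)$$
on a neighborhood of $p$. Plugging this into Royden's localization formula \eqref{roy}, $\k_D(z;X)\ge l_D(z,U^c)\,\k_{D_U}(z;X)$, produces the claimed bound with $c=2e^{2M}$.

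The only remaining issue is promoting the estimate from a small neighborhood of $p$ to all of $D_V$. On the compact portion of $D_V$ where $\d_D(z)$ stays bounded below by a positive constant, $l_D(z,U^c)$ itself admits a positive lower bound, so enlarging $c$ preserves the inequality $l_D(z,U^c)\ge 1-c\d_D(z)$ uniformly on $D_V$. I do not expect any serious obstacle beyond this bookkeeping; the substantive content is contained entirely in Theorem~\ref{kp} combined with Royden's formula.
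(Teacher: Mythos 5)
Your approach is essentially the paper's intended proof: the paper disposes of Corollary~\ref{loc} in one line, ``From Theorem~\ref{kp} and \eqref{roy}, we have'', and your fill-in --- $k$-point property of $D$ at $p$ from Theorem~\ref{kp}, the chain $l_D(z,U^c)\ge\tanh k_D(z,U^c)\ge 1-2e^{2M}\d_D(z)$, then Royden's formula --- is exactly the missing computation, already rehearsed in the proof of Lemma~\ref{deltapower}. The detour through $V^c$ via the monotonicity $l_D(z,U^c)\ge l_D(z,V^c)$ is harmless but unnecessary; you could apply Definition~\ref{kpt} directly with $W=U$.

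The only genuine flaw is the last ``promotion'' paragraph, and it points the wrong way. The set $D_V$, and hence $D_V$ minus a small neighborhood of $p$, is not relatively compact in $D$: its closure touches $\partial D$ away from $p$, so $\d_D(z)$ has no positive lower bound there and ``enlarging $c$'' does not rescue the inequality $l_D(z,U^c)\ge 1-c\,\d_D(z)$ on that region. Fortunately the corollary only asserts \emph{existence} of some $V\subset\subset U$, not the estimate for an arbitrary preassigned $V$. So the correct and simpler move is to drop the promotion argument entirely: take $V$ to be the neighborhood of $p$ on which the $k$-point estimate (with constant $M$) holds and on which $\d_D(z)\le e^{-2M}$, so that $\tanh(-\tfrac12\log\d_D(z)-M)$ makes sense and all your inequalities apply; the conclusion then holds on $D_V$ by construction.
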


The estimate in Corollary \ref{loc} is in the spirit of \cite[Theorem 2.1]{FR}.

Note that the hypotheses of Theorem \ref{kp} hold if $D\cap U$ is a bounded convex domain
with visibility property (or, more generally, if $\{p,q\}$ has visible geodesics for any
$q\in\partial D\cap U$). So, Corollary  \ref{loc} substantially generalizes the localization property
given by [Theorem 3.2, LW].

Assuming Dini-smooth regularity of $\partial D$ near $p,$ one may bootstrap the previous results to
obtain a localization for $k_D$
which is inspired by \cite[Theorem 1.4]{LW}: let $z,w$ be two points tending in the Euclidean sense
to a boundary point $p$; even though their Kobayashi distance might tend to infinity,
the difference between their Kobayashi distances
with respect to the local domain $D\cap U$  and the global domain $D$ tends to $0$ when $z,w \to p$:

\begin{prop}
\label{dini}
Let $D$ be a bounded domain in $\C^n$ with Dini-smooth boundary near a  point
$p\in\partial D.$ Assume that there exists a neighborhood $U$ of $p$ such that
$p$ is a $k$-point for $D\cap U$. Then
$$\lim_{z,w\to p}(k_{D\cap U}(z,w)-k_D(z,w))=0.$$
\end{prop}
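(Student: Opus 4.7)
My plan is to derive the proposition by integrating the pointwise metric comparison of Corollary~\ref{loc} along a suitably chosen almost-geodesic in $D$.

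Since $D\cap U$ has locally Lipschitz boundary near $p$ (the Dini-smoothness of $\partial D$ near $p$ together with an appropriate choice of $U$ ensures this), \cite[Lemma~2.3]{BZ} gives it $\a$-log-growth, hence $\a$-growth for every $\a>0$. Thus Theorem~\ref{kp} applies and yields that $p$ is also a $k$-point for $D$, while Corollary~\ref{loc} provides a neighborhood $V\subset\subset U$ of $p$ and a constant $c>0$ such that
\[
\k_{D\cap U}(z;X)\leq(1+2c\d_D(z))\k_D(z;X),\quad z\in V\cap D,\ X\in\C^n,
\]
as long as $\d_D(z)$ is small enough. The reverse inequality $k_D\leq k_{D\cap U}$ is automatic from $D\cap U\subset D$.

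For the nontrivial direction, fix $\eps>0$. For $z,w$ close to $p$, take a $(1,\eps)$-almost-geodesic $\g\colon[0,L]\to D$ joining $z$ and $w$ (existence by \cite[Proposition~4.4]{BZ}), parametrized by $k_D$-arclength, so $L\leq k_D(z,w)+\eps$. The Dini-smoothness of $\partial D$ at $p$ gives an upper bound $k_D(z,w)\leq\tfrac12\log(1/\d_D(z))+\tfrac12\log(1/\d_D(w))+O(1)$, whereas the $k$-point property of $p$ for $D$ forces each exit from a neighborhood of $p$ and each re-entry to consume about $\tfrac12\log(1/\d_D(\cdot))$ of $k_D$-length; combining these, the portion of $\g$ lying outside $V$ has $k_D$-length bounded by a fixed constant, and one checks that, up to an extra $o(1)$ arclength obtained by rerouting any such excursion along normal rays through an interior base point, $\g$ may be taken to lie in $V\cap D$. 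Integrating the local metric comparison along such a curve then yields
\[
k_{D\cap U}(z,w)\leq \int_0^L\k_{D\cap U}(\g(s);\g'(s))\,ds\leq L+2c\int_0^L\d_D(\g(s))\,ds\leq k_D(z,w)+\eps+2c\int_0^L\d_D(\g(s))\,ds.
\]

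The key technical step, and the main obstacle, is to prove $\int_0^L\d_D(\g(s))\,ds\to 0$ as $z,w\to p$. The crude pointwise estimate $\d_D(\g(s))\leq \min(\d_D(z)e^{2s},\d_D(w)e^{2(L-s)})$ coming from \eqref{dumbest}, combined with the Dini-smooth upper bound on $L$, only gives an $O(1)$ control, which is insufficient. The required decay must come from a finer analysis exploiting that near a Dini-smooth boundary point the Kobayashi--Royden metric behaves locally like that of a half-space: in such a model a $k_D$-unit-speed curve has Euclidean arclength dominated by $\int\sqrt{\d_D(\g(s))}\,ds$, and combining this with the fact that $\g\subset V$ has Euclidean diameter shrinking to $0$ as $z,w\to p$ (itself a consequence of Dini-smoothness and the $k$-point property), one obtains $\int_0^L\d_D(\g(s))\,ds\to 0$. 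Letting $\eps\to 0$ then completes the proof.
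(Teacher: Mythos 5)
Your framework---verify the hypotheses of Theorem~\ref{kp} and Corollary~\ref{loc}, integrate the pointwise metric comparison $\k_{D\cap U}\le(1+O(\d_D))\k_D$ along an almost-geodesic confined near $p$, and reduce to showing $\int_0^L\d_D(\g(s))\,ds\to 0$---is the paper's framework, and the reduction is carried out correctly. The gap is exactly the step you flag, and the fix you propose does not close it. The assertion that ``Euclidean arclength is dominated by $\int\sqrt{\d_D}$'' is not available for a merely Dini-smooth boundary (tangential directions need not satisfy $\k_D(z;X)\gtrsim\|X\|/\sqrt{\d_D(z)}$ without strong pseudoconvexity), and in any case a shrinking Euclidean diameter does not bound Euclidean arclength, nor does either quantity control $\int\d_D$. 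What is actually needed is a pointwise bound on $\d_D(\s(t))$ that decays exponentially in $|t-t_0|$ (where $t_0$ maximizes $\d_D\circ\s$) with a prefactor of order $\var:=\operatorname{diam}W$. The paper obtains it from the Dini-smooth estimate of \cite[Theorem~7]{NA}, quoted as \eqref{naest}: once the near-geodesic $\s$ is confined to $W$ with $\operatorname{diam}W<\var$, the almost-geodesic property and $\d_D(\s(t_0))\ge\d_D(\s(t))$ give $|t-t_0|-\var\le k_{D_U}(\s(t),\s(t_0))\le\log\bigl(1+2\var/\d_D(\s(t))\bigr)$, hence $\d_D(\s(t))\le 2\var/\bigl(e^{|t-t_0|-\var}-1\bigr)$, whose integral over $[0,L]$ is $O(\var)$ uniformly in $L$.

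Incidentally, your ``crude'' estimate is closer to a complete argument than you give it credit for. Once capped by confinement, i.e.\ $\d_D(\g(s))\le\operatorname{diam}W=\var$, combining $\d_D(\g(s))\le\min\bigl(\var,\,\d_D(z)e^{2s},\,\d_D(w)e^{2(L-s)}\bigr)$ with the Dini upper bound $L\le\tfrac12\log\tfrac1{\d_D(z)\d_D(w)}+O(1)$ yields by a direct computation $\int_0^L\d_D(\g(s))\,ds\le C\var\log(1/\var)\to 0$. The missing insight is thus not a ``finer half-space analysis'' but the simple observation that, along a confined near-geodesic, the peak of $\d_D$ is $O(\var)$ rather than $O(1)$.
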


\begin{proof}
Again, as a matter of notation, if $D, U$ are open sets, we set $D_U=D\cap U$. Since $\partial D$ is Dini-smooth near $p,$
by \cite[Theorem 7]{NA}, we may find a neighborhood $W\subset\subset U$ of $p$ such that
\begin{equation}
\label{naest}
k_{D_U}(z,w)\le\log\left(1+\frac{2\|z-w\|}{\sqrt{\d_D(z)\d_D(w)}}\right),
\quad z,w\in D_W.
\end{equation}
On the other hand, by Theorem \ref{kp}, $p$ is a $k$-point for $D$. Fix $\var\in(0,1).$
Then we may shrink $W$ (if necessary) such that $\mbox{diam}\ W<\var,$ \eqref{loci} holds for $c=1,$
and the following is true for another neighborhood $W'\subset W$ of $p$:
if $\g:[0,1]$ is a piecewise $\mathcal C^1$-curve with $\g(0)=z\in D_{W'},$ $\g(L)=w\in D_{W'},$ and
$\int_0^L\k_D(\g(t);\g'(t))dt<k_D(\gamma(0),\gamma(1))+\var,$
then $\g([0,L])\subset W.$ Then for $\s$ as in Lemma \ref{abs} (with
$L'=L$ and $w'=w$), using \eqref{naest}, we get that $\d(\s(t))<{f(t)},$ where
$f(t)=\varepsilon$ if $|t-t_0|<1+\var$ and $f(t)=\frac{2\var}{e^{|t-t_0|-1-\var}-1}$ otherwise. This implies that
$$k_{D_U}(z,w)<k_D(z,w)+\var+C\var,\quad z,w\in D_{W'}$$
for some constant $C>0$, and we are done.
\end{proof}

Now we are in good shape to prove Theorem \ref{visloc}:

\begin{proof}[Proof of Theorem \ref{visloc}]

Note that for any $a\in\overline{D_V}$ there exist a neighborhood $W_a$ and a constant $C_a$ 
such that if $\gamma : [0,1] \longrightarrow D_{W_a}$ is a piecewise $\mathcal C^1$ curve,
then its Kobayashi-Royden length verifies
\begin{equation}
\label{obs}
L_{\k_{D_U}}(\gamma)\le L_{\k_D}+C_a.
\end{equation}
This is trivial if $a\notin\partial D.$ If $a\in\partial D,$ then, by Proposition \ref{kpteq} (i), 
$p$ is a $k$-point for $D_U$ and \eqref{obs} follows by the proof of Theorem \ref{kp}.

Assume the theorem fails. Then we may find sequences of points $z_k\to p\in\overline D_V$ and 
$w_k\to q\in \overline D_V$ such that
$$k_{D_U}(z_k,w_k)-k_D(z_k,w_k)\to\infty.$$

If $p\neq q$, we may choose the neighborhoods so that $W_p\cap W_q=\emptyset$.
Restricting attention to tails of the sequences, we may assume that $z_k\in W_p$ and $w_k\in W_q$, for all $k$.

Let now $\gamma_k : [0,1] \longrightarrow D$ be a piecewise $\mathcal C^1$ curve with 
$\gamma_k(0)=z_k,$ $\gamma_k(1)=w_k$ and 
$L_{\k_D}(\gamma_k) \le k_D(z,w)+1$. 

If $p=q$ and $\gamma_k([0,1])\subset W_p,$ then
\begin{equation}
\label{singlept}
k_{D_U}(z_k,w_k)\le k_D(z_k,w_k)+C_p+1.
\end{equation}
%and this contradicts our assumtion.
Otherwise, then $\gamma_k([0,1])\not \subset W_p \cup W_q$, 
so set $r_k:= \inf\{r: \gamma_k(r) \notin D_{W_p}\}$, $z_k':= \gamma(r_k)$, and 
$s_k:= \sup\{s: \gamma(s) \notin D_{W_q}\}$, $w_k':= \gamma(s_k).$
It follows by \eqref{obs} that
\begin{equation}
\label{interm}
k_{D_U}(z_k,z_k') + k_{D_U}(w_k,w_k') \le k_D(z_k,w_k) +C_1,
\end{equation}
where $C_1=C_p+C_q+1.$

Choose now a base point $o\in D_U$.  The visibility property of $D_U$, the fact that
$z_k\to p\in\ W_p\not\ni z'_k$,
and Proposition \ref{necvis} imply that there exists a constant $C_2>0$
such that for all $k$ in this case, $k_{D_U}(z_k,z'_k) \ge k_{D_U}(z_k,o) + k_{D_U}(o,z'_k)-C_2$.
Analogously, enlarging $C_2$ if needed, since $w_k\to q\in\ W_q\not\ni w'_k,$
$k_{D_U}(w_k,w'_k) \ge k_{D_U}(w_k,o)+ k_{D_U}(o,w'_k) -C_2$.

Using \eqref{interm}, we obtain that
\begin{multline*}
k_D(z_k,w_k) +C_1\ge k_{D_U}(z_k,z'_k) + k_{D_U}(w_k,w'_k)  \\
\ge k_{D_U}(z_k,o) + k_{D_U}(o,z'_k)-C_2 + k_{D_U}(w_k,o)+ k_{D_U}(o,w'_k) -C_2 \\
\ge  k_{D_U}(z_k,w_k) + k_{D_U}(z'_k,w'_k) -2C_2 \ge k_{D_U}(z_k,w_k)  -2C_2,
\end{multline*}
so
\begin{equation}
\label{bound}
k_{D_U}(z_k,w_k) \le k_D(z_k,w_k) +C_1 +2C_2,
\end{equation}
Since for any $k$ either \eqref{singlept} or \eqref{bound} holds,  we reach a contradiction.
\end{proof}

{}

\end{document}